\theoremstyle{theorem} 
   \newtheorem{theorem}{Theorem}[section]
   \newtheorem{corollary}[theorem]{Corollary}
   \newtheorem{proposition}[theorem]{Proposition}
\theoremstyle{definition}
   \newtheorem{definition}[theorem]{Definition}
   \newtheorem{example}[theorem]{Example}
\theoremstyle{remark}
\begin{document}

\begin{frontmatter}

\title{Projective Embedding of Dynamical Systems:\\\textit{ uniform mean field equations}}

\author{F. Caravelli}

\address{Los Alamos National Laboratory\\
{T-Division (T-4), Condensed Matter \& Complex Systems,\\ Los Alamos, New Mexico 87545, USA}}
\ead{caravelli@lanl.gov}
\author{F. L. Traversa}
\address{MemComputing Inc.\\
{MemComputing Inc., 9909 Huennekens Street, Suite 110,
San Diego, California 92121, USA}
}

\author{M. Bonnin}
%\address{Politecnico di Torino\\
%michele.bonnin@polito.it\footnote{Department of Electronics and Telecommunication, Politecnico di Torino, Corso Duca degli
%Abruzzi 24, 10129 Turin, Italy}}
\author{F. Bonani}
\address{Politecnico di Torino\\
{Department of Electronics and Telecommunication,  Corso Duca degli
Abruzzi 24, 10129 Turin, Italy}}

%% or include affiliations in footnotes:
%\author[mymainaddress,mysecondaryaddress]{Elsevier Inc}
%\ead[url]{www.elsevier.com}

\begin{abstract}
We study embeddings of continuous dynamical systems in larger dimensions via projector operators. We call this technique PEDS, projective embedding of dynamical systems, as the stable fixed point of the original system dynamics are recovered via projection from the higher dimensional space. In this paper we provide a general definition and prove that for a particular type of rank-1 projector operator, the uniform mean field projector, the equations of motion become a mean field approximation of the dynamical system. While in general the embedding depends on a specified variable ordering, the same is not true for the uniform mean field projector. In addition, we prove that the original stable and saddle-node fixed points retain this feature in the embedding dynamics, while unstable fixed points become saddles. 
Direct applications of PEDS can be non-convex optimization and machine learning. 
\end{abstract}

\begin{keyword}
projective embedding, projector operators, dynamical systems, fixed points, PEDS
\end{keyword}

\end{frontmatter}

%\maketitle

%\begin{history}
%\received{(to be inserted by publisher)}
%\end{history}
%\date{}

%%\keywords{A list of 3--5 keywords are to be supplied.}

%\begin{multicols}{2}
%\tableofcontents
\section{Introduction}
The past decades witnessed an increased interest in 
physics- or neuro-inspired algorithms for the analysis of dynamical systems, with the main area of application being problems that can be mapped onto optimization ones, whether continuous or discrete
\cite{ Ventra2018,TraversaSOLG,Kirkpatrick,qannealer,tunnelingbald,Hennessy2019,Vadlamani2020,traversa,Sutton2017,Isingmachine,Pierangeli_2019,Csaba2020,goto,Dorigo2004}. 
Among the most important neuro-inspired algorithms, we mention neural networks, which received a large amount of attention given their wide applicability and remarkable achievements: this is an active area of research falling at the boundary between complex systems, neuromorphic computing and nonlinear dynamics, dating back to Turing \cite{Turing} at least. 
In the study of neural networks, one of the most important open problems is the acceleration of the training phase, a problem connected to the roughness of the energy landscape \cite{mckay, barber}. Network training is one of the most difficult tasks, requiring in general huge computational power and a vast number of samples. Many algorithms attempt at modifying the energy function to reduce the time spent on saddle points \cite{jordan,jordan2}. Changing the landscape is however challenging in general, as it somehow requires some \textit{a priori} knowledge of what type of local extrema  should be modified.
Thus, finding valuable alternatives  and/or  generalizations of gradient descent has been a topic of intense study. In addition to this, analog models of computation is an active area of research \cite{pouly} with several applications.

From the point of view of a dynamical system, however, there are not many strategies that one can attempt to employ. A possibility, incidentally the one we explore in this paper, is to increase the dimensionality of the system, by attempting to preserve some properties related to the original dynamical system, while aiming at a trade-off between convergence optimality and the curse of dimensionality.
The basic rationale for this strategy is that increasing dimensions, there are more  pathways that a system can take in order to reach a certain target point.  As a simple example, consider a one dimensional barrier between two minima in a potential: following gradients, one could never move from one local minimum to the other, while in a higher dimension system, pathways around that confinement barrier are, at least in principle, possible.

The technique we propose here is inspired by recent results in the context of memristive circuits \cite{chua71,stru08,Caravelli2016rl,Caravelli2016ml,Caravelli2017,Caravelli2019}. In circuits, Kirchhoff laws are manifestations of the conservation of physical quantities such as charge or energy. Mathematically, these can be expressed via the introduction of projection operators, i.e. matrices $\boldsymbol{\Omega}$ satisfying the constraint $\boldsymbol{\Omega}^2=\boldsymbol{\Omega}$, and directly connected to circuit topology. For instance, for a resistive circuit made of identical unitary resistances in series with impressed voltage generators, the Ohm's law for the network can be expressed as
\begin{eqnarray}
    \vec i=\boldsymbol{\Omega} \vec v,
    \label{eq:rescirc}
\end{eqnarray}
where $\vec v$ is the collection of voltage generators connected in series to each resistance, while $\vec i$ contains the branch currents. The underlying assumption of \eqref{eq:rescirc} is that the voltage generators $v_i$'s are in series to the resistances $i$'s, while the circuit can be represented as graph with $E$ edges. Given the branch currents and a certain orientation of the graph loops $1,\dots, L$, we can obtain the so called loop matrix of the circuit $A$, of size $L\times E$, such that $\boldsymbol{\Omega}=\boldsymbol{A}^t(\boldsymbol{A} \boldsymbol{A}^t)^{-1} \boldsymbol{A}$, where $^t$ denotes the transpose.
The details of the derivation of $\boldsymbol{\Omega}$ from the circuit topology are beyond the scope of this paper, where $\boldsymbol{\Omega}$ will be kept generic and unrelated to any underlying graph or conservation law.

We assume a continuous dynamical system, but the technique can in principle be extended to vector maps, and thus works also for numerical implementations of a dynamical system. Let us consider a dynamical system expressed in vector form as a first-order differential system
\begin{eqnarray}
    \frac{dx_i}{dt} =  f_i(\vec x) \qquad i=1,\dots,m
    \label{eq:orig}
\end{eqnarray}
where functions $f_i(\cdot)$ are assumed known, and analytic. We are in general interested in recovering the stable fixed points of \eqref{eq:orig}, i.e. the values $\vec x^*$ such that $f_i(\vec x^*)=0$, if they exist.

To this aim we consider another dynamical system, of size $mN$, written in the form
\begin{eqnarray}
    \frac{d}{dt}  \vec X_i=  \boldsymbol{\Omega} \vec F_i(\vec X_1,\cdots, \vec X_m )+\vec G_i(\vec X_i) \qquad i=1,\dots,m
    \label{eq:ext0}
\end{eqnarray}
where for each $i$ value we define an augmented vector  $\vec X_i$ of size $N$. The question we aim at answering in this contribution is to ascertain  whether functions $\vec F_i$ and $\vec G_i$ exist such that the dynamical system \eqref{eq:orig} is contained, in a sense we will make more precise in the next section, into the extended system  \eqref{eq:ext0}. The answer we provide in this paper is affirmative, as we will explicitly construct such system along with the technique to recover the original dynamical system.  

From a mathematical perspective, these generalizations can be investigated by the study of the properties of fixed points in the embedded system in terms of the original ones, which is the strategy we use in this paper. A fixed point $\vec x^*$ is particular point of the phase space satisfying $\frac{d\vec x}{dt}|_{\vec x^*}=f(\vec x^*)=0$.
 We dub the method developed in this paper \textit{Projective Embedding of Dynamical Systems} (PEDS), as the technique involves the embedding of a target dynamical system of dimension $m$ into one of dimension $mN$; ultimately, we recover the fixed points of the original dynamical system by projecting back onto a chosen set (of size $m$)  of observables. We will prove  that the information of the fixed points of the original target system are related to the fixed points of the reduced observables.  As we will see, the dynamical system in which the embedding is contained is a nontrivial and nonlinear extension of the original dynamical system which is obtained via a map between the original one and an extended one. Although the projection operator may be quite general, we prove most of the results here for a specific operator, that we call uniform mean-field projector, as in this simplified case mostly analytical proofs are available.

The structure of the paper is as follows. In Section \ref{sec:defs} we introduce the PEDS procedure formally, and provide various examples to intuitively grasp why these definitions make sense. In Section \ref{sec:umfp} we study the uniform mean field projector, and both for 1-dimensional and $m$-dimensional dynamical systems we prove exact results about the properties of the asymptotic stable fixed points and their Jacobians. In Section \ref{sec:numerics} we provide numerical examples alongside analytical analysis, to further corroborate the bulk of the paper. Finally, conclusions follow.

\section{The PEDS procedure: key definitions and examples} \label{sec:defs}

In order to clarify the techniques developed in this paper, we now construct the simplest example of the embedding, before introducing the necessary definitions. Notation-wise, we will denote with $\boldsymbol{I}$ the identity matrix, while $\vec 1$ is a column vector with elements equal to 1. %Let us briefly provide some comments on the notation. We can already see from eqn. (\ref{eq:ext0}) that the notation will be a little cumbersome, but necessary. We will use bold symbols for matrices, while $\vec {\cdot}$ for vector elements.

\begin{example} \label{ex:exampleexp}
Exponential dynamics.\\
\rule{\textwidth}{0.05cm}
Let us consider the following one dimensional dynamical system:
\begin{eqnarray}
    \frac{d x}{dt}=a \tilde x \qquad  x(0)=x_0,
\end{eqnarray}
with $a\in \mathbb{R}$, whose analytical solution is given by
\begin{eqnarray}
    x(t)=e^{a t} x_0.
\end{eqnarray}

Considering an $N\times N$ projector matrix $\boldsymbol{\Omega}$ such that $\boldsymbol{\Omega}^2=\boldsymbol{\Omega}$ and, thus, $\boldsymbol{\Omega}(\boldsymbol{I}-\boldsymbol{\Omega})=0$, we define the following enlarged (size $N$) dynamical system
\begin{eqnarray}
    \frac{d\vec X}{dt}=a \boldsymbol{\Omega} \vec X -\alpha (\boldsymbol{I}-\boldsymbol{\Omega} )\vec X \qquad \vec X(0)=x_0 \vec b,
\end{eqnarray}
where $\alpha>0$, and $\vec b$ is an arbitrary vector, satisfying the only requirement $\boldsymbol{\Omega} \vec b\neq \vec 0$.

Since the system above is linear, we do know the analytical solution, which is given by
\begin{eqnarray}
    \vec X(t)=e^{[a \boldsymbol{\Omega} -\alpha (\boldsymbol{I}-\boldsymbol{\Omega} )]t} \vec X(0) \approx e^{a \boldsymbol{\Omega}t} \vec X(0)
    \label{eq:1dex}
\end{eqnarray}
where the approximation holds for $t\rightarrow +\infty$, i.e. for $t\gg 1/\alpha$. As for any projector $\boldsymbol{\Omega}$ the following identity holds
\begin{eqnarray}
    e^{a\boldsymbol{\Omega}}=\boldsymbol{I}+(e^a-1)\boldsymbol{\Omega}
    \label{eq:projident}
\end{eqnarray}
the asymptotic solution reads
\begin{eqnarray}
    \vec X(t) \approx (\boldsymbol{I}-\boldsymbol{\Omega})\vec X(0)+ e^{at}x_0\boldsymbol{\Omega}\vec b = (\boldsymbol{I}-\boldsymbol{\Omega})\vec X(0)+ x(t)\boldsymbol{\Omega}\vec b
    \label{eq:2dex}
\end{eqnarray}
Therefore, projecting \eqref{eq:2dex}
\begin{eqnarray}
    \boldsymbol{\Omega} \vec X(t)\approx x(t) \boldsymbol{\Omega} \vec b,
    \label{eq:1dsol}
\end{eqnarray}
i.e., the asymptotic solution of \eqref{eq:1dex} is contained as a common factor in all the modes of $\vec X(t)$, the ``replicated" dynamics.

As a last comment, we recover the solution of the original dynamical system by averaging the elements of \eqref{eq:1dsol}
\begin{eqnarray}
    \frac{1}{N}\vec 1^T\boldsymbol{\Omega} \vec X(t)\approx x(t) \frac{1}{N} \vec 1^T\boldsymbol{\Omega} \vec b,
\end{eqnarray}
where $^T$ represents the transpose. Therefore, choosing vector $\vec b$ such that
\begin{eqnarray}
   \frac{1}{N} \vec 1^T\boldsymbol{\Omega} \vec b=\frac{1}{N} \sum_{i,j=1}^N \Omega_{ij}b_j=1
\end{eqnarray}
we find
\begin{eqnarray}
    x(t)=\frac{1}{N}\vec 1^T\boldsymbol{\Omega} \vec X(t) = \frac{1}{N} \sum_{i,j=1}^N \Omega_{ij}X_j(t)
\end{eqnarray}
i.e., the projected dynamics recovers the original one dimensional system.\\
\rule{\textwidth}{0.05cm}
\end{example}

The main goal of this paper is to extend the results of Example~\ref{ex:exampleexp} to arbitrary dynamical systems. Let us now identify the key steps of the procedure. First, we begin with a dynamical system in the standard form. 

\begin{figure}
    \centering
    \includegraphics[scale=0.8]{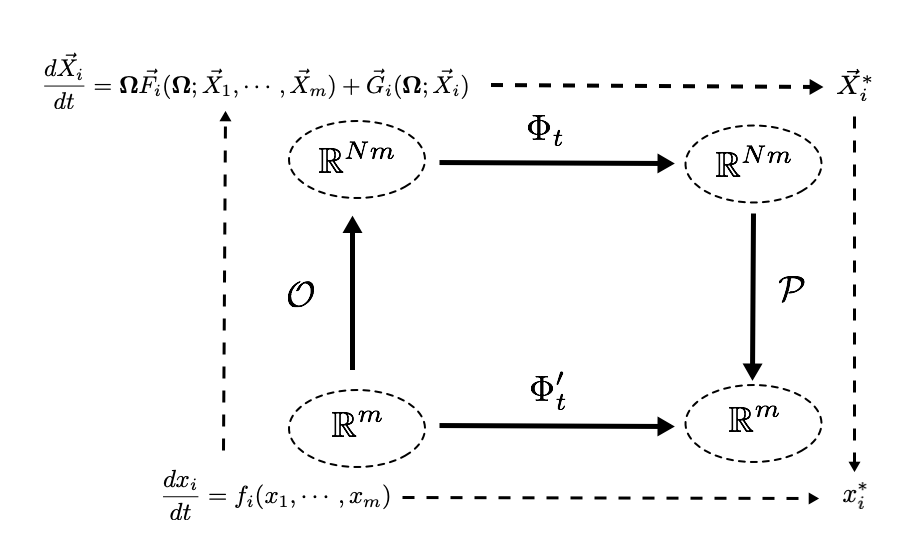}
    \caption{Graphical representation of the PEDS procedure and of the associated maps. The horizontal arrows represent the time evolution maps $\Phi_t$ and $\Phi_t^\prime$, while the vertical arrows represent the embedding $\mathcal O$ and the projection $\mathcal P$ map, respectively.}
    \label{fig:projectedd}
\end{figure}

\begin{definition} \textit{Embedding procedure: PEDS.} \label{def:peds}
We explicitly define here  the steps involved in developing the PEDS procedure.
\begin{enumerate}
\item We begin with a tuple $(\{ f_1(\vec x),\cdots,f_m(\vec x)\},\boldsymbol{\Omega},\{\vec G_1,\cdots, \vec G_m\},\mathcal S,\{\vec b_1,\cdots,\vec b_m\},N)$, where $\boldsymbol{\Omega}$ is a %n orthogonal
size $N$ projector operator. We call  $\{ f_1(\vec x),\cdots,f_m(\vec x)\}$ the \textit{target} dynamical system, while  $x_i$ represent the \textit{target variables}. $\mathcal S$ represents an ordering, relevant for the case of a multi-dimensional target system if the embedding is non-commutative. Vector $\vec b$ is constant and such that $\boldsymbol{\Omega} \vec b\neq \vec 0$. Finally,  $N$ is the dimension of the embedding for each scalar variable. As such, it  can also be interpreted as the number of dimensions in which each scalar variable is expanded into.
\item Given the target dynamical system of dimension $m$, we build an extended dynamical system of size $mN$, represented by a set of canonical equations of the form
\begin{eqnarray}
    \frac{d}{dt}  \vec X_i=  \boldsymbol{\Omega}  \boldsymbol{F}_i(\vec X_1,\cdots, \vec X_m )\vec b_i+\vec G_i(\boldsymbol{\Omega};\vec X_i) \qquad i=1,\dots, m
    \label{eq:ext}
\end{eqnarray}
 This step %of the construction 
 is represented by the arrow $\mathcal O$ in Fig.~\ref{fig:projectedd}, being  it a mapping between each scalar functions $f_i$ to the vector function $\vec F_i=\boldsymbol{F}_i\vec b_i$. Thus, for each dimension of the original dynamical system, we obtain an extended $N$ dimensional subspace in the $Nm$ dynamical system, so that 
 \begin{eqnarray}
(\{ f_1(\vec x),\cdots,f_m(\vec x)\}) &\underset{\mathcal O}{\rightarrow}& (\{ \vec F_1(\vec X_i),\cdots,\vec F_m(\vec X_i)\},\{\vec G_1(\boldsymbol{\Omega};\vec X_i),\cdots, \vec G_m(\boldsymbol{\Omega};\vec X_i)\})\nonumber \\
&&\ \ \  \equiv (F,G)
  \end{eqnarray}
We call the specific map $\mathcal O$ the \textit{embedding}, while $(F,G)$ is the \textit{extended system} and $\vec X_i$ the \textit{extended variable} (i.e., a set of $N$ scalar variables in the extended system). We also dub the set $\vec G_i$, the \textit{decay functions}. In each extended subspace, $\vec X_i$ is a vector of components $X_{i,j}$, while diagonal matrix $\boldsymbol{X}_i$ is made of elements $\boldsymbol{X}_{i,jk}=X_{i,j}\delta_{jk}$, where $\delta_{jk}$ represents Kronecker symbol. The original vector can be easily recovered from the diagonal matrix as $\vec X_i=\boldsymbol{X}_i \vec 1$. We stress that in principle $\boldsymbol{F}_i$ can be a non-trivial function of $\Omega$, as we shall discuss later on.
\item We  consider the time evolution of both the original and the extended system, represented by maps $\Phi_t^\prime$ and $\Phi_t$, respectively, in Fig.~\ref{fig:projectedd}. In Example \ref{ex:exampleexp}, the two maps were analytically expressed, thanks to the simplicity of the target system. 
\item Arrow $\mathcal P$ in Fig.~\ref{fig:projectedd}, finally, projects the extended dynamical system from size $Nm$ to an $m$ dimensional system, that is required to coincide with the trajectory of the target  dynamical system. For each variable, the projection is derived from the projector operator as, given a certain extended variable $\vec X_i$, we obtain $\bar x_i=\frac{1}{N} \sum_{j,k=1}^N {\Omega}_{jk} X_{i,k}.$
\end{enumerate}
\end{definition}

\subsection{Extended variable ordering}
\label{sec:ordering}
Before delving into the details of the construction of \eqref{eq:ext}, let us clarify what we mean by ordering. During the development of the PEDS procedure, commuting variable products such as $x_1 x_2$ will be mapped onto matrix products of the form $(\boldsymbol{\Omega} \boldsymbol{X}_1)(\boldsymbol{\Omega} \boldsymbol{X}_2)$. %\footnote{We note that the diagonal matrix $\boldsymbol{X}_1$ is related to vector $\vec X_1$ by $\vec X_i=\boldsymbol{X}_1\vec 1$.}
As matrix products do not commute, the ordering of the variables will have a role.

\begin{definition} \textit{Ordering.} Within the context of PEDS, an ordering $S$ is a map between commuting monomials of the form $x_1^{i_1} x_2^{i_2} \cdots x^{i_m}_m$ and non-commuting matrix monomials of the form $(\boldsymbol{\Omega} \boldsymbol{X}_1)^{i_1}(\boldsymbol{\Omega} \boldsymbol{X}_2)^{i_2}\cdots (\boldsymbol{\Omega} \boldsymbol{X}_m)^{i_m}$.
\end{definition}
In general, an ordering can be written in terms of a certain set of coefficients. We will use the following notation
\begin{eqnarray}
    \{(\boldsymbol{\Omega}  \boldsymbol{X}_1)^{i_1}\cdots (\boldsymbol{\Omega}  \boldsymbol{X}_m)^{i_m} \}_S=\sum_{\sigma\in \mathcal S(m)} o_{\sigma(1) \cdots \sigma(m)} (\boldsymbol{\Omega}  \boldsymbol{X}_{\sigma(1)})^{i_{\sigma(1)}}\cdots (\boldsymbol{\Omega}  \boldsymbol{X}_{\sigma(m)})^{i_{\sigma(m)}}
\end{eqnarray}
where $\sigma$ is an element of the permutation group $\mathcal S(m)$ over $m$ variables, the coefficients $o_{\sigma(1) \cdots \sigma(m)}$ are zero if at least two indices are equal, and
\begin{equation}
    \sum_{\sigma\in \mathcal S(m)} o_{\sigma(1) \cdots \sigma(m)}=1.
\end{equation}

\begin{definition}
Given the monomial $x_1^{i_1} x_2^{i_2} \cdots x_m^{i_m}$, the \textit{standard} ordering is given by $(\boldsymbol{\Omega} \boldsymbol{X}_1)^{i_1} (\boldsymbol{\Omega} \boldsymbol{X}_2)^{i_2}\cdots (\boldsymbol{\Omega} \boldsymbol{X}_m)^{i_m}$, i.e. a matrix monomial where the matrix products strictly follow the same sequence as in the scalar case.
\end{definition}

\begin{definition}
Given the monomial $x_1^{i_1} x_2^{i_2} \cdots x_m^{i_m}$, the \textit{balanced} ordering is given by $$\frac{1}{m!}\sum_{\sigma\in \mathcal S(m)}(\boldsymbol{\Omega} \boldsymbol{X}_{\sigma(1)})^{i_{\sigma(1)}} (\boldsymbol{\Omega} \boldsymbol{X}_{\sigma(2)})^{i_{\sigma(2)}}\cdots (\boldsymbol{\Omega} \boldsymbol{X}_{\sigma(m)})^{i_{\sigma(m)}}$$ 
\end{definition}

Notice that $$\frac{1}{m!}\sum_{\sigma\in \mathcal S(m)} 1=1.$$
and that, given an order-independent function $M$, i.e. a function satisfying
\begin{eqnarray}
    M(a_1,\cdots,a_m)=M(a_{\sigma(1)},\cdots,a_{\sigma(m)}), 
\end{eqnarray}
for any permutation $\sigma\in \mathcal S(m)$, then
\begin{align}
    \sum_{\sigma\in \mathcal S(m)} o_{\sigma(1),\cdots,\sigma(m)}M(a_{\sigma(1)},\cdots,a_{\sigma(m)})&= \sum_{\sigma\in \mathcal S(m) } o_{\sigma(1),\cdots,\sigma(m)} M(a_1,\cdots,a_m) \nonumber \\
    &=M(a_1,\cdots,a_m)\sum_{\sigma\in \mathcal S(m)} o_{\sigma(1),\cdots,\sigma(m)}\nonumber \\ &=M(a_1,\cdots,a_m)
\end{align}

\begin{example}\ \\
\rule{\textwidth}{0.05cm}
The standard ordering is characterized by
\begin{eqnarray}
    o_{\sigma(1) \cdots \sigma(m)}=\delta_{\sigma(1) 1}\cdots \delta_{\sigma(m) m},
\end{eqnarray}
while for the balanced ordering, $o_{\sigma(1) \cdots \sigma(m)}={1}/{m!}$.

Considering the case of two scalar variables (i.e., $m=2$), choosing  $o_{12}=1$ and $o_{21}=o_{11}=o_{22}=0$, we get $\{(\boldsymbol{\Omega}  \boldsymbol{X}_1)^{i_1}(\boldsymbol{\Omega}  \boldsymbol{X}_2)^{i_2} \}_S=(\boldsymbol{\Omega}  \boldsymbol{X}_1)^{i_1}(\boldsymbol{\Omega}  \boldsymbol{X}_2)^{i_2}$. Another possible choice is  $o_{12}=a, o_{21}=b, o_{11}=o_{22}=0$, where $0\le a,b\le 1$ and $a+b=1$, so that 
\begin{equation}
    \{(\boldsymbol{\Omega}  \boldsymbol{X}_1)^{i_1}(\boldsymbol{\Omega}  \boldsymbol{X}_2)^{i_2} \}_S=a (\boldsymbol{\Omega}  \boldsymbol{X}_1)^{i_1}(\boldsymbol{\Omega}  \boldsymbol{X}_2)^{i_2} + b(\boldsymbol{\Omega}  \boldsymbol{X}_2)^{i_2}(\boldsymbol{\Omega}  \boldsymbol{X}_1)^{i_1}
\end{equation}
\rule{\textwidth}{0.05cm}
\end{example}

\subsection{Decay functions}

We discuss now the decay functions $\vec G_i(\boldsymbol{\Omega};\vec X_i)$. The choice made in Example~\ref{ex:exampleexp} was
\begin{equation}
    \vec G(\boldsymbol{\Omega};\vec X)=-\alpha (\boldsymbol{I}-\boldsymbol{\Omega}) \vec X
    \label{eq:decaystandard}
\end{equation}
where $\alpha\ge 0$. This particular choice corresponds to a precise definition:
\begin{definition} \textit{Standard decay function}.
The decay function in \eqref{eq:decaystandard} is called \textit{standard decay function}.
\end{definition}
As seen in Example~\ref{ex:exampleexp}, the standard decay function allowed to recover the target dynamical system dynamics, that in turn was reconstructed in the Span$(\boldsymbol{\Omega})$. The role played by the decay functions is to enforce that in each extended subspace, the modal components associated to the Ker$(\boldsymbol{\Omega})$ are asymptotically vanishing.
\begin{definition}
A decay function $\vec G_i(\boldsymbol{\Omega};\vec{X}_i)$ is \textit{$\boldsymbol{\Omega}$-eligible} if 
\begin{equation}
    \lim_{t \rightarrow +\infty}\boldsymbol{\Omega} \vec G_i(\boldsymbol{\Omega};\vec X_i(t))=\vec 0
\end{equation}
and if the solution of the dynamical system obtained projecting \eqref{eq:ext} onto the Ker$(\boldsymbol{\Omega})$ (i.e., projecting the extended equation via $(\boldsymbol{I}-\boldsymbol{\Omega})$ and defining $\vec X_{ci}(t)=(\boldsymbol{I}-\boldsymbol{\Omega}) \vec X_i(t)$)
\begin{equation}
    \frac{d\vec X_{ci}}{dt}=(\boldsymbol{I}-\boldsymbol{\Omega}) \vec G_i(\boldsymbol{\Omega};\vec X_i)
\end{equation}
is decaying, i.e. if $\lim_{t\rightarrow \infty} \vec X_{ci}(t)=\vec 0$.
\end{definition}
Obviously, the standard decay functions are $\boldsymbol{\Omega}$-eligible.

\subsection{Embedding map $\mathcal O$} 
We are now ready to state the exact definition of the $\mathcal O$ map. However, this step requires to express the nonlinear scalar functions defining the target dynamical system as a power series. From this standpoint, it is convenient to formulate the Taylor expansion of an $m$ variable, scalar analytic function $f(\vec x)$ as a superposition of monomials exploiting Kronecker symbol:
\begin{equation}
f(\vec x)=\sum_{j=0}^\infty \sum_{i_1,\dots, i_j=0}^j\delta_{j,\sum_{k=1}^m i_k} b_{j;i_1\dots i_j} x_1^{i_1}\cdots x_m^{i_m} =\sum_{j=0}^\infty \sum_{i_1,\dots i_j=0}^j a_{j;i_1\dots i_j} x_1^{i_1}\cdots x_m^{i_m},
\label{eq:taylormonomial}
\end{equation}
where $a_{j;i_1\cdots i_m}=\delta_{j,\sum_{k=1}^j i_j} b_{j;i_1\cdots i_j}$.

\begin{definition} \label{def:matrixmap} \textit{Matrix map.}\\
Given a scalar analytic function $f(\vec x)$ with Taylor expansion as in \eqref{eq:taylormonomial}, we call a \textit{matrix map} for $f$ the following construction
\begin{equation}
\boldsymbol{F}(\vec X_1,\dots, \vec X_m)=\sum_{j=0}^\infty \sum_{i_1,\cdots i_j=0}^j a_{j;i_1\dots i_j} \{(\boldsymbol{\Omega_1}\boldsymbol{X}_1)^{i_1} \cdots (\boldsymbol{\Omega_1}\boldsymbol{X}_m)^{i_m}\}_{S}
\label{eq:vectaylorexp}
\end{equation} 
where $S$ is a properly defined ordering.
\end{definition}

\begin{definition} \label{def:omap} 
\textit{Embedding map.}  \\
The embedding map $\mathcal O$ is defined as the tuple $\mathcal O=\Big(\{ f_i\},\boldsymbol{\Omega}, \{\vec G_i\},S,\{\vec b_i\},N \Big)$, where $\vec G_i$ represents decay functions, and $\vec b_i$ is a set of constant vectors satisfying condition $\boldsymbol{\Omega}\vec b_i\neq\vec 0$. The \textit{embedding map} $\mathcal O$ of \eqref{eq:orig} is given by
\begin{equation}
    \frac{dx_i}{dt}=f_i(\vec x) \underset{\mathcal O}{\longrightarrow} \frac{d\vec X_i}{dt}= \boldsymbol{\Omega} \boldsymbol{F}_i(\vec X_i,\cdots,\vec X_m) \vec b_i+\vec G_i(\boldsymbol{\Omega};\vec X_i) \qquad i=1,\dots,m
    \label{eq:embeddingmap}
\end{equation}
where $\boldsymbol{F}_i$ is the matrix map associated to $f_i$ according to Definition~\ref{def:matrixmap}.
\end{definition}

Let us now provide  three examples of matrices $\boldsymbol{F}_i$ which will be used in the following. Each target function is analytical, with series representation as in \eqref{eq:taylormonomial}:
$$f_i(x_1,\cdots,x_m)=\sum_{k=0}^\infty \sum_{j_1,\cdots,j_m}^k a_{i,k;j_1,\cdots,j_m} x_1^{j_1}\cdots x_m^{j_m} 
$$
\begin{definition}
We define the following three possible matrix embeddings:
\begin{itemize}
    \item the \textit{standard commutative map} is
\begin{eqnarray}
    \boldsymbol{F}^{(c)}_i(\boldsymbol{X}_1,\cdots,\boldsymbol{X}_m)=\sum_{k=0}^\infty \sum_{j_1,\cdots,j_m}^k a_{i,k;j_1,\cdots,j_m} \boldsymbol{X}_1^{j_1}\cdots \boldsymbol{X}_m^{j_m},
    \label{eq:standcomm}
\end{eqnarray}
    \item the \textit{mixed commutative map} is
    \begin{eqnarray}
    \boldsymbol{F}^{(mc)}_i(\boldsymbol{\Omega};\boldsymbol{X}_1,\cdots,\boldsymbol{X}_m)=a_{i,0}\boldsymbol{I}+\sum_{k=1}^\infty \sum_{j_1,\cdots,j_m}^k a_{i,k;j_1,\cdots,j_m} (\boldsymbol\Omega(\boldsymbol{X}_1^{j_1}\cdots \boldsymbol{X}_m^{j_m})^{1/k})^k,
    \label{eq:mixedcomm}
\end{eqnarray}
where $a_{i,0}$ denotes the constant term of the series expansion for function $f_i$
\item the \textit{standard non-commutative map} is
\begin{eqnarray}
    \boldsymbol{F}^{(nc)}_i(\boldsymbol{\Omega};\boldsymbol{X}_1,\cdots,\boldsymbol{X}_m)=\sum_{k=0}^\infty \sum_{j_1,\cdots,j_m}^k a_{i,k;j_1,\cdots,j_m} \{(\boldsymbol{\Omega}\boldsymbol{X}_1)^{j_1}\cdots (\boldsymbol{\Omega}\boldsymbol{X}_m)^{j_m}\}_{S}
    \label{eq:standnoncomm}
\end{eqnarray}
where $S$ is the chosen ordering.
\end{itemize}
\end{definition}
Clearly, since diagonal matrices $\boldsymbol{X_i}$ commute, defining an ordering for the standard and the mixed commutative maps is unnecessary. As we will see, such difference is important for embeddings of vector dynamical systems in the case of  the mixed commutative map, but not for a  scalar system.
 Notice also that 
\begin{itemize}
    \item the standard commutative map is a linear mix of the dynamical systems functions $f_i(\vec x)$, since a direct calculation shows
\begin{eqnarray}
    \boldsymbol{F}^{(c)}_i(\boldsymbol{X}_1,\cdots,\boldsymbol{X}_m)=\text{diag}\big(f_i(X_{1,1},\cdots,X_{m,1}),\cdots,f_i(X_{1,N},\cdots,X_{m,N}) \big).\label{eq:diagoform}
\end{eqnarray}
which simplifies drastically the evaluation
\item for scalar dynamical systems, the mixed commutative map and the standard non-commutative map reduce to the same quantity
\item in the case of vector dynamical systems, the mixed commutative map preserves the commutativity of the target variables, since diagonal matrices $\boldsymbol{X_i}$ commute among themselves. 
\end{itemize}
As a consequence, for scalar dynamical system we will study only the standard commutative and non-commutative maps, while the result will follow also for the mixed commutative map from the non-commutative one. However, we will have to be more careful in the vector case.

\subsection{Projection operator $\boldsymbol{\Omega}$}
\label{sec:proj}

We provide here a few definitions on the projection operators of size $N$ we will consider in the following.
\begin{definition}\label{def:trivialproj}
A projector $\boldsymbol{\Omega}$ is called \textit{trivial} if rank$(\boldsymbol{\Omega})=N$, or, equivalently, if Span$(\boldsymbol{\Omega})=\mathbb{R}^N$. 
\end{definition}
A simple proof shows that the only trivial projector is the identity matrix $\boldsymbol{I}$.

\begin{definition}\label{def:meanfieldproj}
The \textit{uniform mean-field projector} $\boldsymbol{\Omega}_1$ is defined as the square matrix with elements
\[
\Omega_{1,ij}=\frac{1}{N}
\] 
\end{definition}

Let us consider $\boldsymbol{X}$ to be a diagonal matrix, as in the PEDS embedding procedure. Projection using the uniform mean-field operator yields
\begin{equation}
    \boldsymbol{\Omega}_1\boldsymbol{X}=\frac{1}{N}\begin{pmatrix}
    1 & \cdots & 1\\
    \vdots & \ddots & \vdots\\
    1 & \cdots &1
    \end{pmatrix} \begin{pmatrix}
    X_1 & 0 & 0\\
    \vdots & \ddots & \vdots\\
    0 & 0 & X_N
    \end{pmatrix}=\frac{1}{N}\begin{pmatrix} X_1 & X_2 & \cdots & X_N\\
    \vdots & \vdots & \vdots & \vdots\\
     X_1 & X_2 & \cdots & X_N
    \end{pmatrix}
\end{equation}
therefore, the powers of $\boldsymbol{\Omega} \boldsymbol{X}$ appearing in the PEDS procedure, are neither trivial expressions nor sparse matrices, and indeed contain non linear components in the $X_i$ variables.
\begin{example}\ \\
\rule{\textwidth}{0.05cm}
For $N=2$ we have
\begin{equation}
    \boldsymbol{\Omega}_1 \boldsymbol{X}=\frac{1}{2}\begin{pmatrix}X_1 & X_2\\X_1 & X_2\end{pmatrix}, \ \ \ \ (\boldsymbol{\Omega}_1 \boldsymbol{X})^2=\frac{1}{4} \begin{pmatrix} X_1(X_1+X_2) & X_2(X_1+X_2) \\ X_1(X_1+X_2) & X_2(X_1+X_2)\end{pmatrix}=\langle X \rangle \boldsymbol{\Omega}_1 \boldsymbol{X}
\end{equation}
where $\langle X\rangle=\frac{1}{N} \sum_{j=1}^N X_j$.\\
\rule{\textwidth}{0.05cm}
\end{example}
The previous example can easily be generalized to size $N$, showing that $(\boldsymbol{\Omega}_1 \boldsymbol{X})^k=\langle X\rangle^{k-1} \boldsymbol{\Omega}_1 \boldsymbol{X}$, thus justifying the definition of $\boldsymbol{\Omega}_1$ as the uniform mean-field projector. 

A similar property applies to vectors, as $\boldsymbol{\Omega}_1 \vec X=\langle X\rangle \vec 1$.
\begin{example}\ \\
\rule{\textwidth}{0.05cm}
Consider again the Example \ref{ex:exampleexp}. We can write the embedded system as
\begin{equation}
    \frac{d\vec X}{dt}=a \boldsymbol{\Omega} \vec X-\alpha (\boldsymbol{I}-\boldsymbol{\Omega})\vec X=\boldsymbol{\Omega} (a \boldsymbol{\Omega} \boldsymbol{X})\vec 1-\alpha(\boldsymbol{I}-\boldsymbol{\Omega}) \vec X
\end{equation}
which is in the form of a PEDS \eqref{eq:ext0}, with $\boldsymbol{F}=a\boldsymbol{\Omega}\boldsymbol{X}$, $\vec b=\vec 1$ and $\vec G=-\alpha(\boldsymbol{I}-\boldsymbol{\Omega}) \vec X$.\\
\rule{\textwidth}{0.05cm}
\end{example}

We would like to stress the fact that the PEDS mapping is, in general, highly non-trivial, at least as far as the projection operator is not the trivial one: this condition is required because in this case the matrix powers of the form $(\boldsymbol{\Omega} \boldsymbol{X}_i)^k$ couple all the subspace variables in a nonlinear way. 

On the other hand, for the trivial projector, the  standard decay function is identically zero, and the extended system as well as any extended monomial are ordering independent. In fact, as the diagonal matrices $\boldsymbol{X}_j$ commute  among themselves, we have that 
\begin{eqnarray}
\{(\boldsymbol{\Omega} \boldsymbol{X}_1)^{j_1} \cdots (\boldsymbol{\Omega} \boldsymbol{X}_m)^{j_m}\}_{S}=   (\boldsymbol{X}_1)^{j_1} \cdots (\boldsymbol{X}_m)^{j_m}
\end{eqnarray}
if and only if $\boldsymbol{\Omega}=\boldsymbol{I}$. As a consequence, \eqref{eq:embeddingmap} becomes
\begin{equation}
    \frac{d\vec X_i}{dt}= \boldsymbol{\Omega} \boldsymbol{F}_i(\vec X_i,\cdots,\vec X_m) \vec b_i= \begin{pmatrix} f_i(X_{i,1},\dots,X_{i,m})b_{i,1}\\ \vdots \\f_i(X_{i,N},\dots,X_{i,m})b_{i,N}
 \end{pmatrix} \qquad i=1,\dots,m
\end{equation}
thus showing that the PEDS procedure for the trivial projector decouples into $N$ identical copies of the original system.

%\textcolor{red}{previous pointer}
%\hline
%\section{Main results}

%We will now state the key results of the paper.
%\textcolor{blue}{I will fill this section at the end}

%\subsection{Case of mean-field projector}

\section{Embedding via the uniform mean field projector} \label{sec:umfp}

We derive here in a more rigorous way the key results presented above. We focus on the uniform mean field projector $\boldsymbol{\Omega}_1$, as the proofs are easier to be carried out. Nevertheless, several results are actually valid even for a more general projection operator $\boldsymbol{\Omega}$: these will be explicitly denoted by using the general projector $\boldsymbol{\Omega}$ in place of the uniform mean field operator.

\subsection{Simple case: Scalar target system, embedding without decay function}
We start from the case of a one dimensional target dynamical system
\begin{equation}
\frac{dx}{dt}=f(x)
\label{eq:target}
\end{equation}
where $f(x)$ is analytic, so that
\begin{equation}
    f(x)=\sum_{i=0}^\infty a_i x^i.
    \label{eq:1d}
\end{equation}
Following the PEDS procedure, we introduce the projector operator $\boldsymbol{\Omega}$. The extended variable $\vec X$ is thus an $N$-dimensional vector with components $X_i$, and the matrix map associated to $f$ takes either the standard commutative form \eqref{eq:standcomm} so that
\begin{equation}
\vec F(\vec X)=\sum_{i=0}^\infty a_i \boldsymbol{X}^i \vec 1
    \label{eq:thebanalitylemma}
\end{equation}
where we have chosen $\vec b=\vec 1$, or the standard non-commutative form \eqref{eq:standnoncomm} (we remind that for scalar target systems, the mixed commutative and the standard non-commutative forms coincide)
\begin{equation}
\vec F(\vec X)=\sum_{i=0}^\infty a_i \boldsymbol{\Omega}\boldsymbol{X}^i \vec 1
    \label{eq:thebanalitylemma_noncomm}
\end{equation}

Before we begin our discussion on the embedding, it is worth giving a definition of what we mean when we say that a dynamical system is \textit{contained} in another one.
Taking Fig.~\ref{fig:projectedd} as a reference, let our target system be described by the evolution map (the solution of the dynamical system) $\phi_t^\prime:\mathbb{R}\rightarrow \mathbb{R}$, while the PEDS evolution is instead a map $\phi_t:\mathbb{R}^{N}\rightarrow \mathbb{R}^{N}$. 

\begin{definition}
A dynamical system $A$ of size $N_A$ is \textit{contained} in a dynamical system $B$ of dimensions $N_B> N_A$ if a linear operator $\mathcal P:\mathbb{R}^{N_B}\rightarrow \mathbb{R}^{N_A}$ exists such that, for $\vec X\in \mathbb{R}^{N_B}$
\begin{equation}
    \mathcal P\phi_t(\vec X)=\phi_t^\prime(\vec x), \label{eq:contains}
\end{equation}
where $\vec x$ has size $N_A$.
\end{definition}
Given the definition above, we can now prove the following

\begin{proposition} \label{prop:banality}
\textbf{Banality of mean value.}  Let $\mathcal O=(f(x),\boldsymbol{\Omega},0,\vec 1,N)$ be a PEDS tuple of a target dynamical system as in \eqref{eq:target}, where the matrix map can take either the standard commuting or standard non-commuting forms. Then, the extended dynamical system \eqref{eq:thebanalitylemma} or \eqref{eq:thebanalitylemma_noncomm} contains the dynamics of \eqref{eq:1d} for a generic projection operator $\boldsymbol{\Omega}$ satisfying $\boldsymbol{\Omega} \vec 1\neq \vec 0$.
\end{proposition}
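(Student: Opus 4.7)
The plan is to exhibit a linear projection $\mathcal P:\mathbb R^N\to\mathbb R$ and a compatible embedded initial condition so that $\bar x(t):=\mathcal P\vec X(t)$ satisfies the target equation $\dot{\bar x}=f(\bar x)$. Since the embedding is built with $\vec b=\vec 1$, the natural candidate is the mean $\mathcal P\vec X=\tfrac{1}{N}\vec 1^{\,T}\vec X$, paired with the initial datum $\vec X(0)=x_0\vec 1$, which automatically gives $\mathcal P\vec X(0)=x_0$. The task then reduces to differentiating $\bar x(t)$ along the extended flow and recognizing $f(\bar x)$ on the right-hand side.

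For the non-commutative map \eqref{eq:thebanalitylemma_noncomm} the core step is a rank-one collapse of the Taylor series. Using the identity $(\boldsymbol{\Omega}_1\boldsymbol{X})^k=\langle X\rangle^{k-1}\boldsymbol{\Omega}_1\boldsymbol{X}$ established in Section~\ref{sec:proj}, combined with $\boldsymbol{\Omega}_1\vec X=\langle X\rangle\vec 1$ and $\boldsymbol{\Omega}_1\vec 1=\vec 1$, I would prove termwise that $(\boldsymbol{\Omega}_1\boldsymbol{X})^k\vec 1=\langle X\rangle^k\vec 1$. Summing the series then gives $\boldsymbol{\Omega}_1\vec F^{(nc)}(\vec X)=f(\langle X\rangle)\vec 1$, so that the embedded ODE reduces to $\dot{\vec X}=f(\langle X\rangle)\vec 1$. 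Averaging both sides with $\tfrac{1}{N}\vec 1^{\,T}$ yields $\dot{\bar x}=f(\bar x)$ with $\bar x=\langle X\rangle$, and this works for arbitrary initial conditions, since the offsets $X_i-\langle X\rangle$ are simply conserved in time.

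For the commutative map \eqref{eq:thebanalitylemma} I would instead work on the invariant diagonal line. From the diagonal form $\boldsymbol{F}^{(c)}(\vec X)\vec 1=(f(X_1),\dots,f(X_N))^T$, restricted to $\vec X=x\vec 1$ the vector field becomes $f(x)\boldsymbol{\Omega}\vec 1$, which remains proportional to $\vec 1$ precisely when $\boldsymbol{\Omega}\vec 1=\vec 1$, a property shared by $\boldsymbol{\Omega}_1$ and by any projector whose image contains $\vec 1$. With the initial datum $\vec X(0)=x_0\vec 1$, uniqueness of solutions of the embedded ODE then forces $\vec X(t)=x(t)\vec 1$ where $\dot x=f(x)$, and the mean projection recovers $x(t)$.

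The main obstacle I anticipate is the \emph{generic projection operator} clause. The rank-one identity $(\boldsymbol{\Omega}_1\boldsymbol{X})^k\vec 1=\langle X\rangle^k\vec 1$ is genuinely specific to $\boldsymbol{\Omega}_1$; for an arbitrary projector $\boldsymbol{\Omega}$ with $\boldsymbol{\Omega}\vec 1\neq\vec 0$ the contraction $\vec 1^{\,T}(\boldsymbol{\Omega}\boldsymbol{X})^k\vec 1$ does not reduce to a power of $\vec 1^{\,T}\vec X$. The salvage I would attempt is to retain $\vec X(0)=x_0\vec 1$, use the idempotency $\boldsymbol{\Omega}(\boldsymbol{\Omega}\boldsymbol{X})^i=(\boldsymbol{\Omega}\boldsymbol{X})^i$ to argue that $\boldsymbol{\Omega}\vec X(t)$ remains colinear with $\boldsymbol{\Omega}\vec 1$ via a scalar $x(t)$ obeying $\dot x=f(x)$, and then replace $\mathcal P$ by the renormalized projection $\vec 1^{\,T}\boldsymbol{\Omega}\vec X/(\vec 1^{\,T}\boldsymbol{\Omega}\vec 1)$, which equals $x_0$ initially and reproduces $x(t)$ along the flow.
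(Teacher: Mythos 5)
Your core computation is correct and, where it matters for the rest of the paper, actually stronger than the printed proof. The paper's argument is simply to posit the homogeneous ansatz $\vec X(t)=x(t)\vec 1$, observe that then $\boldsymbol{X}=x\boldsymbol{I}$ collapses both matrix maps to $f(x)\boldsymbol{\Omega}\vec 1$, project through $\boldsymbol{\Omega}$ (or onto a single component with $(\boldsymbol{\Omega}\vec 1)_i\neq 0$), and cancel $\boldsymbol{\Omega}\vec 1\neq\vec 0$ to conclude $\dot x=f(x)$. Your commutative-map argument is that same idea made precise via invariance of the diagonal line plus uniqueness of solutions, and your observation that this invariance needs $\boldsymbol{\Omega}\vec 1=\vec 1$ (not merely $\boldsymbol{\Omega}\vec 1\neq\vec 0$) is a legitimate sharpening: the paper glosses over the fact that $\dot{\vec X}=f(x)\boldsymbol{\Omega}\vec 1$ is only tangent to the line $\{x\vec 1\}$ when $\boldsymbol{\Omega}\vec 1\propto\vec 1$. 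Your non-commutative argument via the rank-one collapse $(\boldsymbol{\Omega}_1\boldsymbol{X})^k\vec 1=\langle X\rangle^k\vec 1$ goes further than the proposition asks, giving $\dot{\langle X\rangle}=f(\langle X\rangle)$ for \emph{arbitrary} initial data; the paper only reaches that conclusion later, in the discussion around \eqref{eq:meanvalue1}--\eqref{eq:meanvalue2}, and it is indeed specific to $\boldsymbol{\Omega}_1$.

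The one place your proposal does not close is the one you flagged: the ``generic $\boldsymbol{\Omega}$'' clause. Your salvage --- that $\boldsymbol{\Omega}\vec X(t)$ stays colinear with $\boldsymbol{\Omega}\vec 1$ --- does not follow from idempotency alone. Starting from $\vec X(0)=x_0\vec 1$, what is conserved is the kernel component $(\boldsymbol{I}-\boldsymbol{\Omega})\vec X(t)=x_0(\boldsymbol{I}-\boldsymbol{\Omega})\vec 1$, so at later times $\vec X=x\,\boldsymbol{\Omega}\vec 1+x_0(\boldsymbol{I}-\boldsymbol{\Omega})\vec 1$ and the diagonal matrix $\boldsymbol{X}$ is no longer a multiple of $\boldsymbol{I}$; consequently $(\boldsymbol{\Omega}\boldsymbol{X})^k\vec 1$ does not reduce to $x^k\boldsymbol{\Omega}\vec 1$ and the colinearity is not propagated. (There is also the minor hazard that your normalizer $\vec 1^{T}\boldsymbol{\Omega}\vec 1$ can vanish for a non-symmetric projector even when $\boldsymbol{\Omega}\vec 1\neq\vec 0$; the paper avoids this by projecting onto a single nonzero component of $\boldsymbol{\Omega}\vec 1$.) You should be aware, however, that the paper's own proof rests on exactly the same unestablished consistency of the ansatz for general $\boldsymbol{\Omega}$: it asserts $\dot x\,\vec 1=f(x)\boldsymbol{\Omega}\vec 1$ and only checks the relation after applying $\boldsymbol{\Omega}$. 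So the gap you identified is real, it is shared by the published argument, and it is harmless for the uniform mean-field projector on which the remainder of the paper relies.
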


\begin{proof} Let us consider an extended variable $\vec X$ subject to the condition $\vec X=x \vec 1$. Then, as $\boldsymbol{X}=x\vec 1$ and $\boldsymbol{\Omega}^i=\boldsymbol{\Omega}$ $i>0$, for both the standard commuting and non-commuting maps we have:
\begin{equation}
\frac{d}{dt} \vec X= \frac{dx}{dt}\vec 1=\begin{cases} \displaystyle
\sum_{i=0}^\infty a_i x^i \boldsymbol{\Omega}  \vec 1 & \text{commuting map}\\[1ex] \displaystyle
\sum_{i=0}^\infty a_i x^i \boldsymbol{\Omega} \vec 1 & \text{non commuting map}
\end{cases}=a_0 \boldsymbol{\Omega} \vec 1+\sum_{i=1}^\infty a_i x^i \boldsymbol{\Omega} \vec 1 = f(x) \boldsymbol{\Omega} \vec 1 
\end{equation}
Therefore, projecting the previous relation onto the span of $\boldsymbol{\Omega}$, i.e. evaluating $\boldsymbol{\Omega} \frac{d}{dt} \vec X=\boldsymbol{\Omega} \vec F(\boldsymbol{X})$, we obtain
\begin{equation}
    \left(\frac{dx}{dt}-f(x)\right)\boldsymbol{\Omega}\vec 1 =0.
\end{equation}
It follows that as $\boldsymbol{\Omega} \vec 1\neq \vec 0$,  then $\frac{dx}{dt}-f(x)=0$. 
In order to prove that the dynamical system is contained, we can project on any component $i$, obtaining
\begin{eqnarray}
   \omega_i\left( \frac{dx}{dt}-f(x)\right)=0
\end{eqnarray}
if $\omega_i=(\boldsymbol{\Omega} \vec 1)_i\neq 0$,
we have then proven that  an initial condition exists for which \eqref{eq:contains} applies.
\end{proof}

Proposition~\ref{prop:banality} is a warm up for the type of proofs that will follow. It shows that if the initial condition for the variables $\vec X$ are chosen homogeneously, then the extended dynamical system will follow the one dimensional dynamics of \eqref{eq:target}. However, condition $\vec X= x \vec 1$ is a strong requirement for the dynamical system. In principle, a dynamically obtained convergence towards a state of the form $\vec X=x \vec 1$ would be a much better demand. To this aim, we introduce the decay functions.

\subsection{Scalar target system: Enforcing the convergence to the mean via decay functions}
We consider now the following form for the extended system \eqref{eq:ext} based on the uniform mean field projector:
\begin{equation}
   \frac{d\vec X}{dt}= \boldsymbol{\Omega}_1 \vec F(\vec X)-\alpha(\vec X-\langle X \rangle\vec 1)=\boldsymbol{\Omega}_1 \vec F(\vec X)-\alpha(\boldsymbol{I}-\boldsymbol{\Omega_1})\vec X
    \label{eq:thebanalitylemma2}
\end{equation}
where $\langle X \rangle=({1}/{N}) \sum_{i=1}^N X_i$ and $\alpha>0$. Notice that $\vec X-\langle X \rangle \vec 1=(\boldsymbol{I}-\boldsymbol{\Omega}_1) \vec X$ because of the properties of $\boldsymbol{\Omega}_1$ discussed in Sec.~\ref{sec:proj}. The second term on  the right hand side of \eqref{eq:thebanalitylemma2} is an ``elastic" force compelling the extended trajectories to remain  close to the mean. The relative strength of the two addends  determines the behavior of the system.

\begin{proposition} \label{prop:convergence1}\textbf{Convergence to the mean}.
The dynamics of \eqref{eq:thebanalitylemma2} is characterized by the same fixed points, if they exist, as for the target system \eqref{eq:target} both for the standard commuting and non-commuting matrix maps. 
\end{proposition}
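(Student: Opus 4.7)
The plan is to exploit the orthogonal decomposition induced by the uniform mean-field projector to decouple the equilibrium condition into two independent requirements, one of which forces the extended variable to be homogeneous ($\vec X=x\vec 1$) and the other to satisfy the target fixed-point equation $f(x)=0$.

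Concretely, at an equilibrium of \eqref{eq:thebanalitylemma2} we have
\begin{equation}
\boldsymbol{\Omega}_1 \vec F(\vec X)=\alpha(\boldsymbol{I}-\boldsymbol{\Omega}_1)\vec X.
\end{equation}
The first step is to apply $\boldsymbol{\Omega}_1$ to both sides: since $\boldsymbol{\Omega}_1(\boldsymbol{I}-\boldsymbol{\Omega}_1)=\boldsymbol{\Omega}_1-\boldsymbol{\Omega}_1^2=\boldsymbol{0}$ and $\boldsymbol{\Omega}_1^2=\boldsymbol{\Omega}_1$, the right-hand side vanishes identically while the left-hand side reduces to $\boldsymbol{\Omega}_1\vec F(\vec X)$. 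Hence at equilibrium one must have $\boldsymbol{\Omega}_1\vec F(\vec X)=\vec 0$, and plugging this back into the equilibrium equation yields $\alpha(\boldsymbol{I}-\boldsymbol{\Omega}_1)\vec X=\vec 0$, i.e.\ (since $\alpha>0$) $\vec X\in\mathrm{Span}(\boldsymbol{\Omega}_1)=\mathrm{Span}(\vec 1)$. So any equilibrium must take the homogeneous form $\vec X=x\vec 1$ for some scalar $x=\langle X\rangle$.

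Next I would evaluate $\vec F$ on this homogeneous configuration separately for the two maps. For the commuting map, $\boldsymbol{X}=x\boldsymbol{I}$ gives $\boldsymbol{X}^i\vec 1=x^i\vec 1$ and hence $\vec F^{(c)}(x\vec 1)=\sum_i a_i x^i\vec 1=f(x)\vec 1$. For the non-commuting map, the identity $(\boldsymbol{\Omega}_1\boldsymbol{X})^k=\langle X\rangle^{k-1}\boldsymbol{\Omega}_1\boldsymbol{X}$ established in Sec.~\ref{sec:proj} collapses $(\boldsymbol{\Omega}_1\boldsymbol{X})^i\vec 1$ to $x^i\vec 1$ when $\boldsymbol{X}=x\boldsymbol{I}$, giving again $\vec F^{(nc)}(x\vec 1)=f(x)\vec 1$ after including the constant $a_0\boldsymbol{I}$ term. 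In either case, using $\boldsymbol{\Omega}_1\vec 1=\vec 1$, the remaining equilibrium condition $\boldsymbol{\Omega}_1\vec F(x\vec 1)=\vec 0$ becomes $f(x)\vec 1=\vec 0$, i.e.\ $f(x)=0$. This establishes the direction: every equilibrium of the extended system projects to a fixed point of \eqref{eq:target}. The converse is immediate, since for any root $x^*$ of $f$ the state $\vec X^*=x^*\vec 1$ makes both terms in \eqref{eq:thebanalitylemma2} vanish.

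I expect no substantial obstacle here because the orthogonality $\mathrm{Span}(\boldsymbol{\Omega}_1)\perp\mathrm{Ker}(\boldsymbol{\Omega}_1)$ does most of the work. The only point that requires a moment of care is verifying that the two matrix maps collapse to the same scalar $f(x)$ on the diagonal locus $\vec X=x\vec 1$; this is essentially a restatement of Proposition~\ref{prop:banality} and uses nothing beyond $\boldsymbol{\Omega}_1^k=\boldsymbol{\Omega}_1$ for $k\ge 1$ together with the mean-field identity recalled above. Note that the argument never uses any structural property of $\boldsymbol{\Omega}_1$ beyond being a rank-one projector with $\boldsymbol{\Omega}_1\vec 1\neq\vec 0$, so the same proof goes through for a generic projector $\boldsymbol{\Omega}$ in place of $\boldsymbol{\Omega}_1$, consistent with the remark following Sec.~\ref{sec:umfp}.
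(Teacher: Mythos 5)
Your proof is correct, but it takes a genuinely different route from the paper's. You argue statically: you set $d\vec X/dt=\vec 0$, apply $\boldsymbol{\Omega}_1$ to kill the decay term and conclude $\boldsymbol{\Omega}_1\vec F(\vec X)=\vec 0$, then feed this back to force $(\boldsymbol{I}-\boldsymbol{\Omega}_1)\vec X=\vec 0$, so that every equilibrium is necessarily homogeneous, $\vec X=x\vec 1$, and the remaining condition collapses to $f(x)=0$ via the mean-field identity $(\boldsymbol{\Omega}_1\boldsymbol{X})^k=\langle X\rangle^{k-1}\boldsymbol{\Omega}_1\boldsymbol{X}$. Together with the easy converse, this gives an exact bijection between the equilibria of the two systems, which is precisely what the proposition literally claims. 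The paper instead argues dynamically: it projects the \emph{equation of motion} onto $\mathrm{Span}(\boldsymbol{\Omega}_1)$ and $\mathrm{Ker}(\boldsymbol{\Omega}_1)$, obtains $d\vec X_c/dt=-\alpha\vec X_c$ for the non-uniform modes, and concludes that $\vec X\to\langle X\rangle\vec 1$ exponentially, after which the banality lemma applies. Your version is cleaner and more rigorous for the fixed-point statement itself (you \emph{derive} $\vec X=x\vec 1$ rather than enforce it); the paper's version buys the additional, and for the PEDS program essential, information that the homogeneous subspace is dynamically attracting, which your purely algebraic argument does not address.

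One caveat: your closing remark that the argument "goes through for a generic projector $\boldsymbol{\Omega}$ with $\boldsymbol{\Omega}\vec 1\neq\vec 0$" overreaches. The step $(\boldsymbol{I}-\boldsymbol{\Omega})\vec X=\vec 0\Rightarrow\vec X=x\vec 1$ requires $\mathrm{Span}(\boldsymbol{\Omega})=\mathrm{Span}(\vec 1)$, and the subsequent collapse uses both $\boldsymbol{\Omega}_1\vec 1=\vec 1$ and the identity $(\boldsymbol{\Omega}_1\boldsymbol{X})^k=\langle X\rangle^{k-1}\boldsymbol{\Omega}_1\boldsymbol{X}$, all of which are specific to the uniform mean-field projector. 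For a rank-one projector whose range is spanned by some $\vec v\not\parallel\vec 1$, the equilibria are of the form $c\vec v$, $\boldsymbol{X}$ is no longer a multiple of the identity, and $\vec F$ does not reduce to $f$ evaluated at a single scalar. This does not affect the validity of your proof of the proposition as stated for $\boldsymbol{\Omega}_1$.
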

\begin{proof} Projecting \eqref{eq:thebanalitylemma2} through $\boldsymbol{\Omega}_1$ and using \eqref{eq:thebanalitylemma} we find
\begin{equation}
   \boldsymbol{\Omega}_1\frac{d\vec X}{dt}=\begin{cases} \displaystyle
   \sum_{i=0}^\infty a_i \boldsymbol{\Omega}_1\boldsymbol{X}^i \vec 1-\alpha\cancel{\boldsymbol{\Omega}_1 (\boldsymbol{I}-\boldsymbol{\Omega}_1) \vec X} & \text{standard commuting map} \\[1ex] \displaystyle
   \sum_{i=0}^\infty a_i \boldsymbol{\Omega}_1(\boldsymbol{ \Omega_1 X})^i \vec 1-\alpha\cancel{\boldsymbol{\Omega}_1 (\boldsymbol{I}-\boldsymbol{\Omega}_1) \vec X} & \text{standard noncommuting map} \\
   \end{cases}
\end{equation}
which reduces to the banality lemma enforcing $\vec X=x \vec 1$. 

Considering the complementary projection, we have
\begin{equation}
  (\boldsymbol{I}- \boldsymbol{\Omega}_1)\frac{d\vec X}{dt}=\cancel{(\boldsymbol{I}-\boldsymbol{\Omega}_1)\boldsymbol{\Omega}_1}\vec F(\vec X)-\alpha( (\boldsymbol{I}-\boldsymbol{\Omega}_1)\vec X -\langle X\rangle \cancel{(\boldsymbol{I}-\boldsymbol{\Omega}_1)\vec 1})
\end{equation}
or, defining $\vec X_{c}=(\boldsymbol{I}-\boldsymbol{\Omega}_1)\vec X$,
\begin{equation}
\frac{d}{dt}\vec X_{c}=-\alpha \vec X_{c}.
\label{eq:tatu}
\end{equation}
Equation \eqref{eq:tatu} represents the dynamics of the $N-1$ modes that make $\vec X$ non-uniform. The above implies that any non-uniform mode of $\vec X$ decays exponentially, and thus $\vec X-\langle X\rangle\vec 1\rightarrow 0$ in a time $t\gg \tau={1}/{\alpha}$. This concludes the proof. 
\end{proof}

In conclusion, using the uniform mean field projector $\boldsymbol{\Omega}_1$ and the standard decay functions as in \eqref{eq:thebanalitylemma2}, $\vec X(t)$ converges to the right mean, and thus to the same fixed points as the target system. Let us now provide some technical results to support the idea that the decay functions project back on the subspace of our interest. The result is in fact not strictly limited to the standard decay functions. We now prove the decay of the modes in Ker$(\boldsymbol{\Omega}_1)$ for a generalized set of decay functions. Let us consider
\begin{equation}
    \frac{d\vec X}{dt}=\boldsymbol{\Omega}_1\vec{ F} -\begin{cases} \boldsymbol{D}(\boldsymbol{I}-\boldsymbol{\Omega}_1) \vec{X} &  \text{generalization A}\\
    (\boldsymbol{I}-\boldsymbol{\Omega}_1)\boldsymbol{D}(\boldsymbol{I}-\boldsymbol{\Omega}_1) \vec{X} & \text{generalization B}
    \end{cases}
    \label{eq:genab1}
\end{equation}
where $\boldsymbol{D}$ is a positive diagonal matrix with diagonal elements $\alpha_1,\dots,\alpha_n>0$. If $\alpha=\alpha_1=\dots=\alpha_n$, both generalizations reduce to the standard decay function. In the general case, their difference becomes evident projecting via $\boldsymbol{\Omega}_1$
\begin{eqnarray}
    \boldsymbol{\Omega}_1\frac{d\vec X}{dt}  =\boldsymbol{\Omega}_1 \vec{F}-\begin{cases}
    \boldsymbol{\Omega}_1 \boldsymbol{D} (\boldsymbol{I}-\boldsymbol{\Omega}_1 )\vec X &   \text{generalization A}\\
    \vec 0 &  \text{generalization B}
    \end{cases}
    \label{eq:genab0}
\end{eqnarray}
i.e., the PEDS embeddings $$\mathcal O_A=(f(x),\boldsymbol{\Omega}_1,-\boldsymbol{D}(\boldsymbol{I}-\boldsymbol{\Omega}_1)\vec X,\vec 1,N)$$ and $$\mathcal O_B=(f(x),\boldsymbol{\Omega}_1,-(\boldsymbol{I}-\boldsymbol{\Omega}_1)\boldsymbol{D}(\boldsymbol{I}-\boldsymbol{\Omega}_1)\vec X,\vec 1,N).$$ 
Clearly, the first part of the proof of the banality lemma remains valid also in these cases. On the other hand, projecting via $(\boldsymbol{I}-\boldsymbol{\Omega}_1)$, we obtain for both generalizations the following governing equation for the non-uniform modes
\begin{equation}
\frac{d\vec X_{c}}{dt}=- (\boldsymbol{I}-\boldsymbol{\Omega}_1) \boldsymbol{D} \vec X_{c}
\label{eq:projab}
\end{equation}
whose solution is 
\begin{eqnarray}
    \vec X_{c}(t)=\left. e^{-(\boldsymbol{I}-\boldsymbol{\Omega}) \boldsymbol{D}t}\vec X_{c}(0)\right|_{\boldsymbol{\Omega}=\boldsymbol{\Omega}_1}.
\end{eqnarray}
% this is the solution always, it is in the expansion of the exponential that one has to be careful with simple, zero and nonzero eigenspaces http://www.math.utah.edu/~gustafso/2250systems-de.pdf
We show now that the two generalizations A and B are $\boldsymbol{\Omega}$-eligible, i.e.  that $\vec X_{c}$ asymptotically approaches the zero vector. We prove the following proposition for general projectors:
\begin{proposition} 
\label{prop:span}
Given the governing equation \eqref{eq:projab} written for a general projector $\boldsymbol{\Omega}$, assuming $\vec X_{c}(0)\in \mathrm{Span}(\boldsymbol{I}-\boldsymbol{\Omega})$ then $\vec X_{c}(t)\in \mathrm{Span}(\boldsymbol{I}-\boldsymbol{\Omega})\ \forall t$.
\end{proposition}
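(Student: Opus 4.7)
The plan is to show that $\mathrm{Span}(\boldsymbol{I}-\boldsymbol{\Omega})$ is a flow-invariant subspace for \eqref{eq:projab}. Recall that for any projector, $\mathrm{Span}(\boldsymbol{I}-\boldsymbol{\Omega}) = \mathrm{Ker}(\boldsymbol{\Omega})$, because $\vec v = (\boldsymbol{I}-\boldsymbol{\Omega})\vec w$ implies $\boldsymbol{\Omega}\vec v = (\boldsymbol{\Omega}-\boldsymbol{\Omega}^2)\vec w = \vec 0$, and conversely $\vec v \in \mathrm{Ker}(\boldsymbol{\Omega})$ satisfies $\vec v = (\boldsymbol{I}-\boldsymbol{\Omega})\vec v$. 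So the cleanest reformulation is: starting from $\boldsymbol{\Omega}\vec X_{c}(0)=\vec 0$, we must show $\boldsymbol{\Omega}\vec X_{c}(t)=\vec 0$ for all $t$.

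First I would apply $\boldsymbol{\Omega}$ from the left to both sides of \eqref{eq:projab}, obtaining
\begin{equation}
\boldsymbol{\Omega}\frac{d\vec X_{c}}{dt} = -\boldsymbol{\Omega}(\boldsymbol{I}-\boldsymbol{\Omega})\boldsymbol{D}\vec X_{c} = -(\boldsymbol{\Omega}-\boldsymbol{\Omega}^2)\boldsymbol{D}\vec X_{c} = \vec 0,
\end{equation}
where the last equality uses the defining property $\boldsymbol{\Omega}^2=\boldsymbol{\Omega}$. Interchanging the time derivative with the constant matrix $\boldsymbol{\Omega}$ gives $\frac{d}{dt}(\boldsymbol{\Omega}\vec X_{c})=\vec 0$, so $\boldsymbol{\Omega}\vec X_{c}(t)$ is conserved along trajectories. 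Combined with the hypothesis $\boldsymbol{\Omega}\vec X_{c}(0)=\vec 0$, this yields $\boldsymbol{\Omega}\vec X_{c}(t)=\vec 0$ for every $t$, which is equivalent to $\vec X_{c}(t)\in\mathrm{Span}(\boldsymbol{I}-\boldsymbol{\Omega})$.

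As a redundancy check I would also verify this via the explicit flow $\vec X_{c}(t) = e^{-(\boldsymbol{I}-\boldsymbol{\Omega})\boldsymbol{D}t}\vec X_{c}(0)$: expanding the exponential, the identity term preserves membership in $\mathrm{Span}(\boldsymbol{I}-\boldsymbol{\Omega})$ by assumption, while every $k\geq 1$ term carries a leftmost factor of $(\boldsymbol{I}-\boldsymbol{\Omega})$, hence automatically lies in the same subspace. There is no real obstacle here — the argument hinges only on $\boldsymbol{\Omega}^2=\boldsymbol{\Omega}$ and the explicit appearance of $(\boldsymbol{I}-\boldsymbol{\Omega})$ as a left factor of the vector field; the positivity of $\boldsymbol{D}$ is not used and indeed this invariance is geometric rather than dissipative (positivity is what one would invoke next to get actual decay $\vec X_{c}(t)\to\vec 0$, but that is beyond the statement of the proposition).
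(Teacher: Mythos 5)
Your proof is correct. Your primary argument is a slightly different (and cleaner) route than the paper's: you observe that $\boldsymbol{\Omega}\vec X_{c}(t)$ is a first integral of \eqref{eq:projab}, since left-multiplying the vector field by $\boldsymbol{\Omega}$ annihilates it via $\boldsymbol{\Omega}(\boldsymbol{I}-\boldsymbol{\Omega})=\boldsymbol{0}$, and then you use the identification $\mathrm{Span}(\boldsymbol{I}-\boldsymbol{\Omega})=\mathrm{Ker}(\boldsymbol{\Omega})$ (which the paper itself also uses elsewhere). The paper instead writes the explicit solution $\vec X_{c}(t)=e^{(\boldsymbol{I}-\boldsymbol{\Omega})\boldsymbol{D}t}\vec X_{c}(0)$, expands the exponential, and checks term by term that applying $\boldsymbol{I}-\boldsymbol{\Omega}$ leaves the series unchanged --- which is exactly your ``redundancy check,'' so you have in effect reproduced the paper's proof as well. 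Both arguments hinge on the same two facts: idempotence of $\boldsymbol{\Omega}$ and the projector appearing as the \emph{left} factor of the vector field; your differential formulation avoids invoking the explicit flow and makes the invariance manifestly independent of $\boldsymbol{D}$, a point you correctly flag (positivity of $\boldsymbol{D}$ only enters in the subsequent Lyapunov decay argument of Corollary~\ref{cor:lyapunov1}, not here).
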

\begin{proof} The solution of \eqref{eq:projab} takes the form
\begin{equation}
    \vec X_{c}(t)=e^{\boldsymbol{A} t} \vec X_{c}(0)
\end{equation}
where $\boldsymbol{A}=(\boldsymbol{I}-\boldsymbol{\Omega})\boldsymbol{D}$.
Expanding the exponential, we get
\[
\vec X_{c}(t)=\vec X_{c}(0)+\sum_{k=1}^{+\infty} \frac{t^k}{k!} ((\boldsymbol{I}-\boldsymbol{\Omega})\boldsymbol{D})^k\vec X_{c}(0)
\]
that, projecting through $\boldsymbol{I}-\boldsymbol{\Omega}$, becomes
\begin{equation}
(\boldsymbol{I}-\boldsymbol{\Omega})\vec X_{c}(t)=(\boldsymbol{I}-\boldsymbol{\Omega})\vec X_{c}(0)+\sum_{k=1}^{+\infty} \frac{t^k}{k!} ((\boldsymbol{I}-\boldsymbol{\Omega})\boldsymbol{D})^k\vec X_{c}(0). 
\end{equation}
We notice that if $\vec X_{c}(0)\in \mathrm{Span}(\boldsymbol{I}-\boldsymbol{\Omega})$, then we can express $\vec X_c(0)=\sum_j a_j \vec v_j$ where $\vec v_j$ are eigenvectors associated to the eigenvalue equal to 1 of $\boldsymbol{I}-\boldsymbol{\Omega}$. Thus, $(\boldsymbol{I}-\boldsymbol{\Omega})\vec X_c(0)=\sum_j a_j (\boldsymbol{I}-\boldsymbol{\Omega})\vec v_j=\sum_j a_j \vec v_j=\vec X_c(0)$. This implies that
\begin{equation}
(\boldsymbol{I}-\boldsymbol{\Omega})\vec X_{c}(t)=\vec X_{c}(0)+\sum_{k=1}^{+\infty} \frac{t^k}{k!} ((\boldsymbol{I}-\boldsymbol{\Omega})\boldsymbol{D})^k\vec X_{c}(0)=e^{(\boldsymbol{I}-\boldsymbol{\Omega})\boldsymbol{D}t} \vec X_{c}(0)=\vec X_{c}(t)
\end{equation}
Thus, we have $(\boldsymbol{I}-\boldsymbol{\Omega})\vec X_{c}(t)=\vec X_{c}(t)$, i.e. $\vec X_{c}(t)\in \mathrm{Span}(\boldsymbol{I}-\boldsymbol{\Omega})$. 
\end{proof}
As a result of the proposition above,  vector $\vec X_{c}(t)$ is contained in the subspace spanned by $\boldsymbol{I}-\boldsymbol{\Omega}$ at all times and for any projector, and thus also for $\boldsymbol{\Omega}_1$.

\begin{corollary} \label{cor:lyapunov1}
Equation \eqref{eq:projab} implies that, if $\vec X_{c}(0)\in \mathrm{Span}(\boldsymbol{I}-\boldsymbol{\Omega}_1)$, then in \eqref{eq:genab0} one has $lim_{t\rightarrow \infty } \vec X(t)= \langle X \rangle\vec 1$.
\end{corollary}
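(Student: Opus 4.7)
The plan is to show that the non-uniform component $\vec X_c(t)\equiv(\boldsymbol{I}-\boldsymbol{\Omega}_1)\vec X(t)$ decays to $\vec 0$ as $t\to\infty$; this suffices, because the orthogonal decomposition
\[
\vec X(t)=\boldsymbol{\Omega}_1\vec X(t)+\vec X_c(t)=\langle X(t)\rangle\vec 1+\vec X_c(t)
\]
then immediately yields $\vec X(t)-\langle X(t)\rangle\vec 1\to\vec 0$, which is the claim. The first ingredient I would invoke is Proposition~\ref{prop:span} with $\boldsymbol{\Omega}=\boldsymbol{\Omega}_1$: the hypothesis $\vec X_c(0)\in\mathrm{Span}(\boldsymbol{I}-\boldsymbol{\Omega}_1)$ guarantees $\vec X_c(t)\in\mathrm{Span}(\boldsymbol{I}-\boldsymbol{\Omega}_1)$ for all $t$, so $(\boldsymbol{I}-\boldsymbol{\Omega}_1)\vec X_c(t)=\vec X_c(t)$. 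This lets me rewrite equation~\eqref{eq:projab} on the invariant subspace as
\[
\frac{d\vec X_c}{dt}=-\boldsymbol{M}\,\vec X_c,\qquad \boldsymbol{M}\equiv (\boldsymbol{I}-\boldsymbol{\Omega}_1)\boldsymbol{D}(\boldsymbol{I}-\boldsymbol{\Omega}_1),
\]
where crucially $\boldsymbol{M}$ is now symmetric and positive semi-definite.

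I would then carry out a standard Lyapunov argument with $V(\vec X_c)=\tfrac{1}{2}\|\vec X_c\|^2$, whose derivative along trajectories equals
\[
\dot V=-\vec X_c^{\,T}\boldsymbol{M}\,\vec X_c=-\|\boldsymbol{D}^{1/2}(\boldsymbol{I}-\boldsymbol{\Omega}_1)\vec X_c\|^2.
\]
Since $\boldsymbol{\Omega}_1$ is symmetric with range $\mathrm{span}(\vec 1)$, the kernel of $\boldsymbol{I}-\boldsymbol{\Omega}_1$ is exactly $\mathrm{span}(\vec 1)$, and together with invertibility of $\boldsymbol{D}^{1/2}$ (all $\alpha_i>0$) this forces $\dot V<0$ for every non-zero $\vec X_c\in\mathrm{Span}(\boldsymbol{I}-\boldsymbol{\Omega}_1)$. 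Hence $\boldsymbol{M}$ restricted to this $(N-1)$-dimensional invariant subspace is positive definite and $\vec X_c(t)\to\vec 0$ exponentially, at a rate controlled by $\min_i\alpha_i$.

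The step I expect to be the main subtlety is the symmetrization in the first paragraph: the matrix $(\boldsymbol{I}-\boldsymbol{\Omega}_1)\boldsymbol{D}$ appearing in~\eqref{eq:projab} is generically non-symmetric when $\boldsymbol{D}\neq\alpha\boldsymbol{I}$, so a naive energy argument using $\|\vec X_c\|^2$ would not obviously produce a definite quadratic form. Proposition~\ref{prop:span} is precisely the tool that lets me insert an extra $(\boldsymbol{I}-\boldsymbol{\Omega}_1)$ factor on the right without perturbing the trajectory, after which the manifestly symmetric structure of $\boldsymbol{M}$ makes the decay immediate. The remaining positivity and exponential-rate estimates are routine linear algebra on the $(N-1)$-dimensional subspace orthogonal to $\vec 1$.
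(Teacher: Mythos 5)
Your proposal is correct and follows essentially the same route as the paper: invoke Proposition~\ref{prop:span} to keep $\vec X_c(t)$ in $\mathrm{Span}(\boldsymbol{I}-\boldsymbol{\Omega}_1)$, use that invariance to symmetrize the generator into $(\boldsymbol{I}-\boldsymbol{\Omega}_1)\boldsymbol{D}(\boldsymbol{I}-\boldsymbol{\Omega}_1)$, and run a Lyapunov argument with $V=\|\vec X_c\|^2$ to conclude $\vec X_c(t)\to\vec 0$. If anything, your explicit observation that $\dot V$ is \emph{strictly} negative on nonzero vectors of the invariant subspace (since $\ker(\boldsymbol{I}-\boldsymbol{\Omega}_1)=\mathrm{span}(\vec 1)$ and $\boldsymbol{D}^{1/2}$ is invertible), giving exponential decay at rate $\min_i\alpha_i$, is a slightly more careful finish than the paper's "only minimum of $V$ is $\vec 0$" step.
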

\begin{proof}
We consider the dynamics for the modes $\vec X_{c}=(\boldsymbol{I}-\boldsymbol{\Omega}_1) \vec X(t)$ from \eqref{eq:projab}, and we use a Lyapunov stability argument.
Let us consider the following functional: $V(\vec X_{c})=\vec X_{c}\cdot \vec X_{c}\geq 0$. Then, 
\begin{align}
\frac{d}{dt} V&=2\left(\frac{d}{dt}\vec X_{c}(t)\right)\cdot \vec X_{c}(t)\nonumber \\
&=-2((\boldsymbol{I}-\boldsymbol{\Omega}_1) \boldsymbol{D}\vec X_{c}(t))\cdot \vec X_{c}(t)\nonumber \\
&=-2( \sqrt{\boldsymbol{D}}\vec X_{c}(t))\cdot \sqrt{\boldsymbol{D}} (\boldsymbol{I}-\boldsymbol{\Omega}_1)^T\vec X_{c}(t)\nonumber \\
&=-2( \sqrt{\boldsymbol{D}} \vec X_{c}(t))\cdot \sqrt{\boldsymbol{D}} (\boldsymbol{I}-\boldsymbol{\Omega}_1)\vec X_{c}(t).
\end{align}
Using the fact that $\vec X_{c}(t)\in \text{Span}(\boldsymbol{I}-\boldsymbol{\Omega}_1)$ from the previous Proposition, we obtain that
\begin{equation}
\frac{d}{dt} V=-2 || \sqrt{\boldsymbol{D}} \vec X_{c}(t)||^2 \leq 0.
\end{equation}
Since the only minimum of $V(\vec X)$ is $\vec X=\vec 0$, then $\vec X_{c}(t)\rightarrow \vec 0$ for $t\rightarrow \infty$.
This proves that $\vec X_{c}(0)\rightarrow \vec 0 $, and thus $\vec X\rightarrow \langle X\rangle \vec 1$, for $t\rightarrow \infty$. 
\end{proof}

Propositions~\ref{prop:banality}, \ref{prop:convergence1} and \ref{prop:span}, along with Corollary~\ref{cor:lyapunov1} imply that for a one-dimensional dynamical system, the PEDS $\mathcal O=(f(x),\boldsymbol{\Omega}_1,-(\boldsymbol{I}-\boldsymbol{\Omega}_1)\vec X,\vec 1,N)$ contains the fixed points of the original dynamical system. In particular, Corollary \ref{cor:lyapunov1} implies that the extended system converges to  an asymptotic state of the form $\vec X(t)=x(t) \vec 1$. Therefore, through the banality of the mean value Lemma,  the PEDS embedding will contain the original dynamical system. 

For practical purposes, it is sufficient to consider the observable $\tilde x(t)=\langle X\rangle =\frac{1}{N}\sum_{i,j=1}^N {\Omega}_{1,ij} X_j(t)$ in order to recover the location of the fixed points. Clearly, this example applies only to a one-dimensional dynamical system. However, the result can be extended to the vector case following similar considerations.

\begin{example}\ \label{ex:2D} \\
\rule{\textwidth}{0.05cm}
As an example of a one dimensional dynamical system embedded in $N=2$ dimensions, let us consider the dynamical system 
\begin{eqnarray}
    \frac{dx}{dt}=x-x^2 \label{eq:example2}
\end{eqnarray}
whose stable fixed point is given by $x^*=1$. %{\color{red} non c'\`e anche $x=0$? forse 1 \`e l'unico attrattivo?}. 
A PEDS embedding $\mathcal O=(x-x^2,\boldsymbol{\Omega}_1,-\alpha(\boldsymbol{I}-\boldsymbol{\Omega}_1) \vec X,\vec 1,2)$ is given by the two coupled differential equations:
\begin{eqnarray}
    \frac{d\vec X}{dt}=\boldsymbol{\Omega}_1 \left( \boldsymbol{X} -\boldsymbol{X}^2\right) \vec 1-\alpha( \boldsymbol{I}-\boldsymbol{\Omega}_1)\vec X
\end{eqnarray}
whose components can be made explicit introducing $\langle X\rangle=\frac{1}{2}(X_1+X_2)$ and evaluating the matrix expressions. The result is
\begin{eqnarray}
    \frac{dX_1}{dt}=\langle X\rangle-\langle X\rangle^2-\alpha (X_1-\langle X\rangle) \\[1ex]
    \frac{dX_2}{dt}=\langle X\rangle-\langle X\rangle^2-\alpha (X_2-\langle X\rangle) 
\end{eqnarray}
whose stable fixed point is easily seen to be  $X^*_1=X^*_2=1$.\\
\rule{\textwidth}{0.05cm}
\end{example}

Example~\ref{ex:2D}  highlights an important property of the uniform mean field embedding. We gain some intuition on how the uniform mean field projector works by exploiting a direct evaluation of the matrix powers. For instance, for  the PEDS $\mathcal O=(f(x),\boldsymbol{\Omega}_1,-\alpha (\boldsymbol{I}-\boldsymbol{\Omega}_1) \vec X,\vec 1,N)$, then
\begin{eqnarray}
    \frac{d\vec X}{dt}=\boldsymbol{\Omega}_1 \vec F(\vec X)-\alpha (\boldsymbol{I}-\boldsymbol{\Omega}_1) \vec X.
    \label{eq:nne}
\end{eqnarray}
Using the properties
\[
(\boldsymbol{\Omega}_1 \boldsymbol{X})^k=\langle X\rangle^{k-1} \boldsymbol{\Omega}_1 \boldsymbol{X}\quad k\ge 1, \qquad \boldsymbol{\Omega}_1 \boldsymbol{X} \vec 1=\boldsymbol{\Omega}_1 \vec X=\langle X \rangle \vec 1, \qquad \boldsymbol{\Omega}_1  \vec 1= \vec 1
\]
we obtain the equivalent form for \eqref{eq:nne}
\begin{eqnarray}
    \frac{d \vec X}{dt}= f(\langle x\rangle) \vec 1-\alpha (\vec X-\langle x\rangle \vec 1).
\end{eqnarray}
Multiplying on the left times  $\boldsymbol{\Omega}_1$ and times $\boldsymbol{I}-\boldsymbol{\Omega}_1$, yields
\begin{align}
    \frac{d}{dt} \left(\langle X\rangle \vec 1\right)&=f(\langle X \rangle) \vec 1, \label{eq:meanvalue1}\\
    \frac{d}{dt} \left(\vec X-\langle X\rangle \vec 1\right)&=-\alpha (\vec X-\langle X\rangle \vec 1). \label{eq:meanvalue2}
\end{align}
Therefore,  the mean value $\langle X\rangle$ in \eqref{eq:meanvalue1} follows exactly the target system  dynamics, while \eqref{eq:meanvalue2} asymptotically determines $\vec X\rightarrow \langle X\rangle \vec 1$ for $t\gg 1/\alpha$. It follows that for scalar target systems, because of the identity $(\boldsymbol{\Omega_1 X})^k=\langle X\rangle^{k-1}  \boldsymbol{\Omega_1 X}$, the standard commutative and non-commutative maps are identical. This is no longer true for vector target systems, as we discuss below.

\subsection{General case: Vector target system}

We will now focus on the generalization of the previous results to the case of a vector target dynamical system of arbitrary dimension. Such generalization is involved, basically because  the ordering $S$ becomes important (at least for the standard non-commutative map) and this depends on the fact that matrix terms such as $(\boldsymbol{\Omega} \boldsymbol{X})^i$ and  $(\boldsymbol{\Omega} \boldsymbol{X})^j$ do not commute. Nevertheless, exploiting the properties of the uniform mean field projector $\boldsymbol{\Omega}_1$ certain exact results can be obtained. 

We consider an $m$ dimensional target system as in \eqref{eq:orig}, where a Taylor expansion of the defining functions $f_i(x_1,\dots,x_m)$ takes the form \eqref{eq:taylormonomial} after choosing the ordering $S$.
Following the PEDS procedure, we construct as a first instance the embedding map $\mathcal O=(\{f_i\},\boldsymbol{\Omega}_1,0,S,\vec 1,N)$ in the abscence of decay functions.
Given the extended variables $\vec X_s$ ($s=1,\dots,m$) of size $N$, we write the embedding maps \eqref{eq:standcomm}--\eqref{eq:standnoncomm}  as
\begin{subequations}
\label{eq:pedsmulti}
\begin{align}
\frac{d \vec X_s}{dt}&=\boldsymbol{\Omega}_1\sum_{k=0}^\infty \sum_{i_1,\dots,i_m }^k a_{s,k;i_1\dots i_m} \boldsymbol{X}_1^{i_1}\dots \boldsymbol{X}_m^{i_m} \vec 1 \nonumber\\[1ex]
&\qquad\qquad\qquad\qquad\qquad\qquad\qquad  \text{standard commuting map} \label{eq:pedsmultic}\\[1ex]
\frac{d \vec X_s}{dt}&=a_{s,0}\boldsymbol{\Omega}_1 +
\boldsymbol{\Omega}_1\sum_{k=1}^\infty \sum_{i_1,\cdots,i_m}^k a_{s,k;i_1,\cdots,i_m} (\boldsymbol{\Omega}_1(\boldsymbol{X}_1^{i_1}\cdots \boldsymbol{X}_m^{i_m})^{1/k})^k\vec 1 \nonumber\\[1ex]
&\qquad\qquad\qquad\qquad\qquad\qquad\qquad \text{mixed commuting map} \label{eq:pedsmultimc}\\[1ex]
\frac{d \vec X_s}{dt}&=\boldsymbol{\Omega}_1\sum_{k=0}^\infty \sum_{i_1,\dots,i_m }^k a_{s,k;i_1\dots i_m} \{(\boldsymbol{\Omega}_1 \boldsymbol{X}_1)^{i_1}\dots (\boldsymbol{\Omega}_1 \boldsymbol{X}_m)^{i_m} \}_S\vec 1 \nonumber\\[1ex]
&\qquad\qquad\qquad\qquad\qquad\qquad\qquad  \text{standard noncommuting map} \label{eq:pedsmultinc}
\end{align}
\end{subequations}
where $S$ is the chosen ordering of the variables. The question is whether also in this case the banality of the mean value Proposition~\ref{prop:banality} still holds.

An easy proof shows that assuming a solution $\vec X_s=x_s \vec 1$, the \textit{banality of the mean value} lemma applies also in the vector case, irrespective of the chosen extended variable ordering. 

\begin{corollary} \textbf{Multivariate banality of the mean value}. \label{coro:multibanality}
Let $\boldsymbol{\Omega}_1$ be the uniform mean field projector, and $\mathcal O=(\{f_i\},\boldsymbol{\Omega}_1,\{-\alpha (\boldsymbol{I}-\boldsymbol{\Omega}_1)\vec X_i\}, S,\vec 1,N)$ the embedding with $S$ an arbitrary ordering. Following the PEDS procedure, the dynamics of the $m$ variables $\vec X_i$ is determined by
\begin{eqnarray}
    \frac{d\vec X_i}{dt}=\boldsymbol{\Omega}_1 \vec F_i(\vec X_1,\cdots, \vec X_m)-\alpha(\boldsymbol{I}-\boldsymbol{\Omega}_1) \vec X_i
    \label{eq:pedsomega1}
\end{eqnarray}
where $\vec F_i=\boldsymbol{F}_i(\vec X_1,\cdots, \vec X_m)\vec 1$ is the vector constructed following the PEDS procedure defined in \eqref{eq:pedsmulti}. 
We define the projection operator $\mathcal P_{\boldsymbol{\Omega}_1}=\frac{1}{N} \vec 1^T$, and the projected variables
 $\langle X_i(t)\rangle=\mathcal P_{\boldsymbol{\Omega}_1}    \vec X_i(t)$.  Then, the following two statements hold true
\begin{equation}
   (a)\ \ \  \frac{d}{dt} \langle X_i\rangle= 0 \Longrightarrow \frac{d}{dt}  x_i=0,
\end{equation}
and (b) if the extended system approaches a fixed point $\vec X_i^*$ for times $t\gg {1}/{\alpha}$, then the projection $\mathcal P_{\boldsymbol{\Omega}_1}    \vec X_i^*$ is a fixed point of the target system.
\end{corollary}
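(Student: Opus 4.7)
The plan is to exploit the two complementary projections $\boldsymbol{\Omega}_1$ and $\boldsymbol{I}-\boldsymbol{\Omega}_1$ applied to the extended system~\eqref{eq:pedsomega1}, so that the mean and non-uniform dynamics decouple exactly as in the scalar case. The algebraic backbone is that $(\boldsymbol{I}-\boldsymbol{\Omega}_1)\boldsymbol{\Omega}_1=\boldsymbol{0}$ and $\vec 1^{T}\boldsymbol{\Omega}_1=\vec 1^{T}$, so that the decay term drops under $\mathcal P_{\boldsymbol{\Omega}_1}$ while the $\boldsymbol{\Omega}_1\vec F_i$ term drops under $\boldsymbol{I}-\boldsymbol{\Omega}_1$.

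For part (b), I would start from the fixed-point condition and apply $\boldsymbol{I}-\boldsymbol{\Omega}_1$: the first term vanishes and one is left with $(\boldsymbol{I}-\boldsymbol{\Omega}_1)\vec X_i^{*}=\vec 0$, hence $\vec X_i^{*}=\langle X_i^{*}\rangle\vec 1\equiv x_i^{*}\vec 1$ and $\boldsymbol{X}_i^{*}=x_i^{*}\boldsymbol{I}$. Substituting this uniform state into the three matrix maps~\eqref{eq:pedsmulti} collapses every monomial back to the scalar one it came from: in the standard commuting case $\boldsymbol{X}_1^{i_1}\cdots\boldsymbol{X}_m^{i_m}\vec 1=(x_1^{*})^{i_1}\cdots(x_m^{*})^{i_m}\vec 1$; in the standard non-commuting case $(\boldsymbol{\Omega}_1\boldsymbol{X}_s)^{i_s}=(x_s^{*})^{i_s}\boldsymbol{\Omega}_1$ together with $\boldsymbol{\Omega}_1^{p}=\boldsymbol{\Omega}_1$ for $p\ge1$ makes every permutation in the ordering $S$ yield the same matrix $\bigl(\prod_s (x_s^{*})^{i_s}\bigr)\boldsymbol{\Omega}_1$, so the weights $o_{\sigma}$ sum to one and drop out; the mixed commuting case is handled analogously. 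Applying $\boldsymbol{\Omega}_1$ to the fixed-point condition then yields $f_i(x_1^{*},\dots,x_m^{*})\vec 1=\vec 0$, i.e.\ $f_i(\vec x^{*})=0$.

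For part (a), I would apply $\mathcal P_{\boldsymbol{\Omega}_1}=\tfrac{1}{N}\vec 1^{T}$ to~\eqref{eq:pedsomega1}. Since $\tfrac{1}{N}\vec 1^{T}(\boldsymbol{I}-\boldsymbol{\Omega}_1)=\vec 0^{T}$, the decay term drops and one is left with $\tfrac{d\langle X_i\rangle}{dt}=\tfrac{1}{N}\vec 1^{T}\vec F_i$. Applying $\boldsymbol{I}-\boldsymbol{\Omega}_1$ instead, and using $(\boldsymbol{I}-\boldsymbol{\Omega}_1)\boldsymbol{\Omega}_1=\boldsymbol{0}$, gives $\tfrac{d\vec X_{c,i}}{dt}=-\alpha\vec X_{c,i}$ with $\vec X_{c,i}=(\boldsymbol{I}-\boldsymbol{\Omega}_1)\vec X_i$, so Proposition~\ref{prop:span} and Corollary~\ref{cor:lyapunov1} carry over verbatim to each component, forcing $\vec X_i(t)\to\langle X_i(t)\rangle\vec 1$ for $t\gg 1/\alpha$. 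On this asymptotic uniform manifold the collapse argument of (b) gives $\tfrac{1}{N}\vec 1^{T}\vec F_i=f_i(\langle X_1\rangle,\dots,\langle X_m\rangle)$, so under the identification $x_i\leftrightarrow\langle X_i\rangle$ the projected evolution coincides with the target one, and $\tfrac{d\langle X_i\rangle}{dt}=0$ forces $f_i(\vec x)=0$, hence $\tfrac{dx_i}{dt}=0$.

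The main obstacle I expect is checking that the uniform-state collapse works across all three map variants and is independent of the ordering $S$. The commuting maps are immediate since diagonal matrices commute; the delicate point is the non-commuting map, where one must verify that the idempotency $\boldsymbol{\Omega}_1^{p}=\boldsymbol{\Omega}_1$ absorbs every permutation $\sigma$ of the $(\boldsymbol{\Omega}_1\boldsymbol{X}_{\sigma(s)})$ factors into the same scalar monomial $\prod_s x_s^{i_s}$, so that the coefficients $o_{\sigma}$ factor out and sum to one. This reduction is specific to $\boldsymbol{\Omega}_1$ and would fail for a generic rank-$r$ projector, which is precisely why the corollary is phrased for the uniform mean field operator.
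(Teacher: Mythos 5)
Your proposal is correct and follows essentially the same route as the paper's proof: the complementary projections by $\boldsymbol{\Omega}_1$ and $\boldsymbol{I}-\boldsymbol{\Omega}_1$, the exponential decay of the non-uniform modes, and the collapse of every (arbitrarily ordered) matrix monomial to $\prod_s (x_s^*)^{i_s}\,\boldsymbol{\Omega}_1$ on the uniform manifold. The only cosmetic difference is that for part (b) you extract $(\boldsymbol{I}-\boldsymbol{\Omega}_1)\vec X_i^*=\vec 0$ statically from the equilibrium condition rather than from the asymptotic decay of $\vec X_{i,c}$, which is a valid (and slightly more direct) way to reach the same conclusion.
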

\begin{proof} Let us first prove statement $(a)$, which is a corollary of the banality of the mean value Proposition~\ref{prop:banality}.
We set $\vec X_i(t)=\langle X_i(t)\rangle \vec 1$. In all cases of the standard commutative, mixed commutative and non-commutative maps, the standard decay function is identically zero, and it is not hard to see that, for an arbitrary orderings $S$, we have in all cases the same expression for each term of the expansion:
\begin{align}
    \left.\begin{array}{c}
        \displaystyle \boldsymbol{\Omega}_1 (\boldsymbol{X}_1)^{i_1}\cdots (\boldsymbol{X}_m)^{i_m}\vec 1  \\[1ex]
        \displaystyle \left(\boldsymbol{\Omega}_1  (\boldsymbol{X}_1)^{\frac{i_1}{\sum_{k}i_k}}\cdots (\boldsymbol{X}_m)^{\frac{i_m}{\sum_{k}i_k}}\right)^{\sum_k i_k}  \vec 1\\[2ex]
        \displaystyle\{(\boldsymbol{\Omega}_1  \boldsymbol{X}_1)^{i_1}\cdots (\boldsymbol{\Omega}_1  \boldsymbol{X}_m)^{i_m} \}_S\vec 1
    \end{array}\right\}
    &=\langle X_1(t)\rangle^{i_1}\cdots \langle X_m(t)\rangle^{i_m} \boldsymbol{\Omega}_1 \vec 1 \nonumber\\[1ex]
    &=\langle X_1(t)\rangle^{i_1}\cdots \langle X_m(t)\rangle^{i_m} \vec 1
\end{align}
As a consequence, a relatively simple calculation shows
\begin{eqnarray}
    \frac{d}{dt} \vec X_i=\frac{d}{dt} \langle X_i(t)\rangle \vec 1=f_i(\langle X_1(t)\rangle,\dots,\langle X_m(t)\rangle) \boldsymbol{\Omega}_1 \vec 1=f_i(\langle X_1(t)\rangle,\dots,\langle X_m(t)\rangle) \vec 1,
\end{eqnarray}
or
\begin{eqnarray}
    \left(\frac{d}{dt}\langle X_i(t)\rangle-f_i(\langle X_1(t)\rangle,\dots,\langle X_m(t)\rangle) \right)\vec 1=0.
\end{eqnarray}
Replacing $\langle X_i(t)\rangle$ with $x_i(t)$, we obtain that the fixed points of the extended system under the assumption $\vec X_i(t)=\langle X_i(t)\rangle \vec 1$ must be the same as for the target dynamical system.

We now turn to statement $(b)$. The initial condition $\vec X_i(t=0)$ is now arbitrary. Multiplying on the left \eqref{eq:pedsomega1} times $(\boldsymbol{I}-\boldsymbol{\Omega}_1)$, we get
\begin{equation}
    (\boldsymbol{I}-\boldsymbol{\Omega}_1)\frac{d\vec X_i}{dt}=-\alpha(\boldsymbol{I}-\boldsymbol{\Omega}_1) \vec X_i
    \label{eq:proj}
\end{equation}
where we define $\vec X_{i,c}=(\boldsymbol{I}-\boldsymbol{\Omega}_1) \vec X_i$. As Span$(\boldsymbol{I}-\boldsymbol{\Omega}_1)=\mathrm{Ker}(\boldsymbol{\Omega}_1)$, $\vec X_{i,c}$ can be interpreted as a deviation from the average, since $\vec X_i=\langle X_i\rangle \vec 1+\vec X_{i,c}$.  Following almost the same steps as in the proof of the convergence of the mean for the one dimensional system, we arrive at
\begin{gather}
   \frac{d\vec X_{i,c}}{dt}= -\alpha \vec X_{i,c}\implies\nonumber\\
       \left( \langle X_i(t)\rangle-f_i(\langle X_1(t)\rangle,\dots,\langle X_m(t)\rangle)\right)  \vec 1\nonumber\\
       =\Big( \mathcal P_{\boldsymbol{\Omega}_1} \vec X_i(t)-f_i(\mathcal P_{\boldsymbol{\Omega}_1}  \vec X_1(t),\dots,\mathcal P_{\boldsymbol{\Omega}_1}  \vec X_m(t))\Big)  \vec 1\rightarrow \vec 0,
       \text{ for } t\gg \frac{1}{\alpha}.
\end{gather}
where the second expression follows from the first being $\alpha>0$, so that $\vec X_{i,c}(t)\rightarrow \vec 0$ for $t\gg1/\alpha$. Essentially, this implies that the extended system fixed points are those of the target dynamical system: for long enough times the system converges exponentially to the mean in each variable, for which the banality lemma applies. Thus,  if the projected PEDS given by $\mathcal O$ approaches a fixed point, it has to be a fixed point of the target system. Alternatively, the system must not converge. 
\end{proof}

The results of this section show that at least one type of PEDS exists which preserves the fixed points of the target system, thus justifying the entire construction  of the PEDS embedding. We now focus the attention on  how the PEDS procedure modifies the properties of the fixed points, by analyzing their stability. Therefore, we need to look at the spectral properties of the Jacobian at the embedding fixed points.

\subsection{Properties of the Jacobian and fixed points}
Let us now investigate the properties of the Jacobian.

We begin with the one dimensional case
\begin{eqnarray}
    \frac{d\vec X}{dt}=\underbrace{\boldsymbol{\Omega}_1 \boldsymbol{F} \vec 1}_{1}-\underbrace{\alpha (\boldsymbol{I}-\boldsymbol{\Omega}_1) \vec X}_2\label{eq:zeros}
\end{eqnarray}
For the sake of generality, we consider also the generalized decay functions in \eqref{eq:genab1}
\begin{eqnarray}
    \frac{d\vec X}{dt}=\underbrace{\boldsymbol{\Omega}_1 \boldsymbol{F} \vec 1}_{1}-\begin{cases}\underbrace{ \boldsymbol{D}(\boldsymbol{I}-\boldsymbol{\Omega}_1)  \vec X}_{2A} & \text{generalization A}\\
    \underbrace{ (\boldsymbol{I}-\boldsymbol{\Omega}_1)\boldsymbol{D}(\boldsymbol{I}-\boldsymbol{\Omega}_1) \vec X}_{2B} & \text{generalization B}
    \end{cases}
    \label{eq:genab}
\end{eqnarray}
As usual, we shall derive the  results for a general projector $\boldsymbol{\Omega}$ whenever possible.

\subsubsection{Simple case: scalar target system}
We will focus first on the PEDS of a one dimensional target system, as in this case the ordering is immaterial and proofs are easier to carry out.
Initially we consider the PEDS map $\mathcal O=(f(x),\boldsymbol{\Omega}_1,-\alpha (\boldsymbol{I}-\boldsymbol{\Omega}_1)\vec X,\vec b,N)$, i.e. the standard decay function. The embedding thus takes the form
\begin{eqnarray}
    \frac{d}{dt} X_i=(\boldsymbol{\Omega}_1\boldsymbol{F} \vec b-\alpha(\boldsymbol{I}-\boldsymbol{\Omega}_1) \vec X)_i=M_i.
\end{eqnarray}
We prove the following:
\begin{proposition} \label{prop:onepedsjac}\textbf{One dimensional PEDS Jacobian}\\
For a scalar target system, a PEDS of the form $\mathcal O=(f(x),\boldsymbol{\Omega}, -\alpha(\boldsymbol{I}-\boldsymbol{\Omega})\vec X,\vec b,N)$ is characterized by the following functional form of the Jacobian 
\begin{itemize}
    \item for the standard non-commuting map
    \begin{equation}
    {J}^{(nc)}_{ir}=\sum_{j=1}^N\sum_{l=1}^N{\Omega}_{il} \sum_{z=1}^\infty a_z \sum_{k=0}^{z-1} \sum_{s=1}^N (\boldsymbol{\Omega} \boldsymbol{X})^k_{ls} {\Omega}_{sr} (\boldsymbol{\Omega} \boldsymbol{X})^{z-1-k}_{rj} b_j-\alpha (\boldsymbol{I}-\boldsymbol{\Omega})_{ir},
\end{equation}
\item for the standard commuting map
\begin{eqnarray}
{J}^{(c)}_{ir}={\Omega}_{ir} f^\prime(X_r)b_r-\alpha (\boldsymbol{I}-\boldsymbol{\Omega})_{ir}.
\end{eqnarray}
\end{itemize}
\end{proposition}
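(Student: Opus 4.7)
The plan is to compute $J_{ir} = \partial M_i/\partial X_r$ directly from the explicit form of the right-hand side. The decay term $-\alpha(\boldsymbol{I}-\boldsymbol{\Omega})\vec X$ is linear in the $X_j$'s, so it contributes $-\alpha(\boldsymbol{I}-\boldsymbol{\Omega})_{ir}$ identically in both cases and requires no further work. All the real content lives in differentiating the embedded nonlinear term $(\boldsymbol{\Omega}\boldsymbol{F}\vec b)_i$, which takes a different form for the commuting versus the non-commuting matrix map, so I would handle the two cases separately.

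For the standard commuting case I would invoke the diagonal form \eqref{eq:diagoform}, which for a scalar target system reads $\boldsymbol{F}^{(c)} = \mathrm{diag}(f(X_1),\ldots,f(X_N))$. Then $(\boldsymbol{\Omega}\boldsymbol{F}\vec b)_i = \sum_j \Omega_{ij} f(X_j) b_j$, and since $f(X_j)$ depends only on $X_j$, differentiation is immediate and yields the stated $\Omega_{ir} f'(X_r) b_r$.

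For the standard non-commuting case I would first write $\boldsymbol{F}^{(nc)} = \sum_{z=0}^\infty a_z (\boldsymbol{\Omega}\boldsymbol{X})^z$, so that
\begin{equation}
(\boldsymbol{\Omega}\boldsymbol{F}^{(nc)}\vec b)_i = \sum_{j,l} \Omega_{il} \sum_{z=0}^\infty a_z (\boldsymbol{\Omega}\boldsymbol{X})^z_{lj} b_j .
\end{equation}
The key observation is that, because $\boldsymbol{X}$ is diagonal with diagonal entries equal to $X_1,\ldots,X_N$, one has $\partial (\boldsymbol{\Omega}\boldsymbol{X})_{st}/\partial X_r = \Omega_{st}\delta_{tr}$. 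Applying the Leibniz rule to the matrix power,
\begin{equation}
\frac{\partial (\boldsymbol{\Omega}\boldsymbol{X})^z_{lj}}{\partial X_r} = \sum_{k=0}^{z-1} \bigl[(\boldsymbol{\Omega}\boldsymbol{X})^k \, \partial_{X_r}(\boldsymbol{\Omega}\boldsymbol{X}) \, (\boldsymbol{\Omega}\boldsymbol{X})^{z-1-k}\bigr]_{lj},
\end{equation}
and then inserting the explicit form of $\partial_{X_r}(\boldsymbol{\Omega}\boldsymbol{X})$ collapses the sum over one internal index via the Kronecker delta, leaving precisely $\sum_s (\boldsymbol{\Omega}\boldsymbol{X})^k_{ls}\,\Omega_{sr}\,(\boldsymbol{\Omega}\boldsymbol{X})^{z-1-k}_{rj}$. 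Reassembling all factors and including the $z=0$ term (which has zero derivative) reproduces the claimed expression for $J^{(nc)}_{ir}$. The chief potential pitfall is purely notational: one must not commute $\boldsymbol{\Omega}\boldsymbol{X}$ through $\partial_{X_r}(\boldsymbol{\Omega}\boldsymbol{X})$, and one must keep the index $r$ pinned correctly by the Kronecker delta so that it appears as a fixed middle index in the product of matrix powers. Once the index bookkeeping is done carefully, both formulas follow.
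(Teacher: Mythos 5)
Your proposal is correct and follows essentially the same route as the paper: the non-commuting case is handled by exactly the same Leibniz rule for matrix powers combined with $\partial(\boldsymbol{\Omega}\boldsymbol{X})_{st}/\partial X_r=\Omega_{sr}\delta_{tr}$, and the decay term is treated identically. The only cosmetic difference is in the commuting case, where you differentiate the closed diagonal form \eqref{eq:diagoform} directly while the paper works through the power expansion term by term; the paper itself notes that the closed-form route is available, and both give $\Omega_{ir}f'(X_r)b_r$.
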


\begin{proof}
The Jacobian elements are defined as
\begin{eqnarray}
    {J}_{ir}=\frac{\partial {M}_i}{\partial X_r}=\frac{\partial}{\partial X_r}\sum_{j=1}^N (\boldsymbol{\Omega F})_{ij}b_j-\alpha \frac{\partial}{\partial X_r}((\boldsymbol{I}-\boldsymbol{\Omega})\vec X)_i.
\end{eqnarray}

As we need to evaluate the derivatives of the matrix maps, we consider first the derivatives of the matrix quantities depending on $\boldsymbol X$.
We have
\begin{eqnarray}
    \frac{\partial}{\partial X_r} (\boldsymbol{\Omega} \boldsymbol{X})^m_{ij}&=&\sum_{k=0}^{m-1} \sum_{s=1}^N (\boldsymbol{\Omega} \boldsymbol{X})^k_{is} \sum_{l,t=1}^N\frac{\partial ({\Omega}_{sl} X_l \delta_{lt})}{\partial X_r} (\boldsymbol{\Omega} \boldsymbol{X})^{m-1-k}_{tj} \nonumber \\
    &=&\sum_{k=0}^{m-1} \sum_{s=1}^N (\boldsymbol{\Omega} \boldsymbol{X})^k_{is} {\Omega}_{sr} (\boldsymbol{\Omega} \boldsymbol{X})^{m-1-k}_{rj},
\end{eqnarray}
where a matrix to zero power coincides with the identity matrix.

Taking into account definition \eqref{eq:standnoncomm} we find
\begin{eqnarray}
    \frac{\partial {F}_{ij}^{(nc)}}{\partial X_r}&=& \sum_{l=1}^N{\Omega}_{il}\frac{\partial}{\partial {X_r}} \sum_{z=1}^\infty a_z (\boldsymbol{\Omega} \boldsymbol{X})^z_{lj}\nonumber\\
    &=&\sum_{l=1}^N{\Omega}_{il} \sum_{z=1}^\infty a_z \sum_{k=0}^{z-1} \sum_{s=1}^N (\boldsymbol{\Omega} \boldsymbol{X})^k_{ls} {\Omega}_{sr} (\boldsymbol{\Omega} \boldsymbol{X})^{z-1-k}_{rj}
    \label{eq:jac}
\end{eqnarray}
Thus, the first term of the Jacobian is simply given by
\begin{eqnarray}
   {J}^{(nc,1)}_{ir}=\sum_{j=1}^N\sum_{l=1}^N {\Omega}_{il} \sum_{z=1}^\infty a_z \sum_{k=0}^{z-1} \sum_{s=1}^N (\boldsymbol{\Omega} \boldsymbol{X})^k_{ls} {\Omega}_{sr} (\boldsymbol{\Omega} \boldsymbol{X})^{z-1-k}_{rj} b_j.
\end{eqnarray}
In the case of the standard commutative map, the result can be easily derived exploiting \eqref{eq:diagoform} if function $f(x)$ is known in closed form. On the other hand, making use of the power expansion of the function, we can directly calculate the derivatives noticing that
\begin{eqnarray}
    \frac{\partial}{\partial X_r} {X}^k_{ij} = \frac{\partial}{\partial X_r} (\text{diag}(X_1^k,\cdots,X_m^k))_{ij}= (\text{diag}(0,\dots, k X_r^{k-1},\dots,0))_{ij}=k \delta_{ij}\delta_{ir} X_r^{k-1}
\end{eqnarray}
so that
\begin{eqnarray}
    {J}^{(c,1)}_{ir}&=&\sum_{j=1}^N\frac{\partial \left(\boldsymbol{\Omega} \boldsymbol{F}^{(c)}\right)_{ij}}{\partial X_r}b_j \nonumber \\
    &=&\sum_{j=1}^N\sum_{l=1}^N {\Omega}_{il} \sum_{z=0}^\infty a_z \frac{\partial {X}^z_{lj}}{\partial X_r} b_j \nonumber \\
    &=&\sum_{j=1}^N\sum_{l=1}^N {\Omega}_{il} \sum_{z=1}^\infty z a_z \delta_{lj}\delta_{lr} X_r^{z-1}b_j \nonumber \\
    &=&{\Omega}_{ir} f^\prime(X_r)b_r
\end{eqnarray}
where $f^\prime(x)$ denotes the derivative of $f(x)$.

Similarly, for the standard decay function $\boldsymbol{D}=\alpha \boldsymbol{I}$, it is not hard to see that the second term of the Jacobian is the same irrespective of the chosen matrix embedding
\begin{eqnarray}
{J}^{(2)}_{ir}&=&-\alpha \sum_{j=1}^N \frac{\partial}{\partial x_r} \Big((\boldsymbol{I}-\boldsymbol{\Omega})\boldsymbol{X}\Big)_{ij}\nonumber \\
&=&-\alpha \sum_{j=1}^N(\boldsymbol{I}-\boldsymbol{\Omega})_{ir}\delta_{rj}=-\alpha(\boldsymbol{I}-\boldsymbol{\Omega})_{ir}
\label{eq:jacsecondpart}
\end{eqnarray}
Summing $\boldsymbol{J}^{(1)}$ and $\boldsymbol{J}^{(2)}$, we find the expression to be proven.
\end{proof}

As a direct Corollary of Proposition \ref{prop:onepedsjac}, we obtain that for $\boldsymbol{\Omega}=\boldsymbol{\Omega}_1$ and $\vec b=\vec 1$, the Jacobian takes a simpler form.

\begin{corollary}\label{cor:onedimpedsmf}
Consider the uniform mean field PEDS with standard decay function $\mathcal O=(f(x),\boldsymbol{\Omega}_1, -\alpha(\boldsymbol{I}-\boldsymbol{\Omega}_1)\vec X,\vec 1,N)$ of a scalar dynamical system characterized by a fixed point $x^\ast$. The Jacobian of the PEDS in its fixed point $\vec X^\ast=x^\ast\vec 1$ is given by
\begin{equation}
    {J}_{ir}(\vec X^\ast)=-\alpha {\delta}_{ir}+\frac{1}{N}(f'(x^\ast)+\alpha)  \Longleftrightarrow \boldsymbol{J}(\vec X^\ast)=-\alpha\boldsymbol{I}+(f'(x^\ast)+\alpha)\boldsymbol{\Omega}_1,
\end{equation}
both for the standard commutative and non-commutative maps.
\end{corollary}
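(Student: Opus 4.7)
The plan is to specialize the formulas of Proposition~\ref{prop:onepedsjac} to the setting $\boldsymbol{\Omega} = \boldsymbol{\Omega}_1$, $\vec b = \vec 1$, and then evaluate them at the extended fixed point $\vec X^\ast = x^\ast \vec 1$. I would treat the standard commuting and non-commuting maps separately, verify that they yield the same Jacobian, and then repackage the component-wise result in matrix form.

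For the commuting map the substitution is immediate: Proposition~\ref{prop:onepedsjac} gives $J^{(c)}_{ir} = \Omega_{1,ir} f'(X_r) b_r - \alpha (\boldsymbol{I} - \boldsymbol{\Omega}_1)_{ir}$, and plugging in $\Omega_{1,ir} = 1/N$, $b_r = 1$, $X_r = x^\ast$, together with $(\boldsymbol{I} - \boldsymbol{\Omega}_1)_{ir} = \delta_{ir} - 1/N$, gives exactly $-\alpha \delta_{ir} + (f'(x^\ast) + \alpha)/N$.

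For the non-commuting map I would avoid unpacking the quadruple sum in Proposition~\ref{prop:onepedsjac} and instead use the global simplification $\boldsymbol{\Omega}_1 \vec F^{(nc)}(\vec X) = f(\langle X \rangle)\, \vec 1$. This identity is already established in the text just before \eqref{eq:meanvalue1}, via the relations $(\boldsymbol{\Omega}_1 \boldsymbol{X})^k = \langle X \rangle^{k-1} \boldsymbol{\Omega}_1 \boldsymbol{X}$ for $k \ge 1$ and $\boldsymbol{\Omega}_1 \boldsymbol{X} \vec 1 = \langle X \rangle \vec 1$, and crucially it holds on all of $\mathbb{R}^N$ rather than only on the diagonal locus $\vec X = \langle X \rangle \vec 1$. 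Since each component of the drift then depends on $\vec X$ only through the scalar $\langle X \rangle = (1/N)\sum_j X_j$, the chain rule immediately yields $J^{(nc,1)}_{ir} = f'(\langle X \rangle)/N$, which at $\vec X^\ast$ collapses to $f'(x^\ast)/N$, matching the commuting case.

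The decay-function contribution is the same in both maps, namely $-\alpha(\boldsymbol{I} - \boldsymbol{\Omega}_1)_{ir} = -\alpha \delta_{ir} + \alpha/N$, so adding it reproduces the claimed component-wise expression, and rewriting in matrix form gives $\boldsymbol{J}(\vec X^\ast) = -\alpha \boldsymbol{I} + (f'(x^\ast) + \alpha)\, \boldsymbol{\Omega}_1$. The only point that requires a little care is checking the identity $\boldsymbol{\Omega}_1 \vec F^{(nc)} = f(\langle X \rangle)\,\vec 1$ term-by-term under the power-series expansion of $f$; since this step has effectively already been carried out in the derivation of \eqref{eq:meanvalue1}, no serious obstacle remains and the proof is essentially a one-line specialization on each side.
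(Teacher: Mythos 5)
Your proposal is correct, and for the commuting map it coincides with the paper's proof: a direct substitution of $\Omega_{1,ir}=1/N$, $b_r=1$, $X_r=x^\ast$ into the formula of Proposition~\ref{prop:onepedsjac}. For the non-commuting map, however, you take a genuinely different and in fact cleaner route. The paper evaluates the general quadruple-sum expression $J^{(nc,1)}_{ir}=\sum_{j,l}\Omega_{il}\sum_z a_z\sum_k\sum_s(\boldsymbol{\Omega}\boldsymbol{X})^k_{ls}\Omega_{sr}(\boldsymbol{\Omega}\boldsymbol{X})^{z-1-k}_{rj}b_j$ at $\boldsymbol{X}^\ast=x^\ast\boldsymbol{I}$ and collapses it using $\boldsymbol{\Omega}_1^k=\boldsymbol{\Omega}_1$ and $\boldsymbol{\Omega}_1\vec 1=\vec 1$, recovering the telescoping factor $\sum_{k=0}^{z-1}1=z$ that rebuilds $f'(x^\ast)$. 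You instead invoke the global identity $\boldsymbol{\Omega}_1\vec F^{(nc)}(\vec X)=f(\langle X\rangle)\vec 1$, valid on all of $\mathbb{R}^N$ (and correctly flag that validity off the diagonal locus is the crucial point, since a Jacobian requires the drift in a neighborhood of the fixed point), and then apply the chain rule through the single scalar $\langle X\rangle$. Your route is shorter and actually yields the stronger statement $J^{(nc,1)}_{ir}=f'(\langle X\rangle)/N$ at \emph{every} point, not just at equilibria; its cost is that it is tied to the special algebra of $\boldsymbol{\Omega}_1$ with $\vec b=\vec 1$, whereas the paper's computation passes through a formula stated for general $\boldsymbol{\Omega}$ and $\vec b$, which is what the authors reuse elsewhere. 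No gap: the identity you lean on is established in the text preceding \eqref{eq:meanvalue1} via $(\boldsymbol{\Omega}_1\boldsymbol{X})^k=\langle X\rangle^{k-1}\boldsymbol{\Omega}_1\boldsymbol{X}$ and $\boldsymbol{\Omega}_1\boldsymbol{X}\vec 1=\langle X\rangle\vec 1$, and the decay term is handled identically in both arguments.
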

\begin{proof}
We exploit Proposition~\ref{prop:onepedsjac}, considering $\boldsymbol{\Omega}=\boldsymbol{\Omega}_1$ and $\vec b=\vec 1$, from which, because of Proposition \ref{prop:banality}, we have $\boldsymbol{X}^\ast=x^\ast \boldsymbol{I}$. Substituting into \eqref{eq:jac}, we obtain
\begin{eqnarray}
    {J}^{(nc,1)}_{ir}(\vec X^\ast)&=&\sum_{j=1}^N\sum_{l=1}^N {\Omega}_{1,il} \sum_{z=1}^\infty a_z (x^\ast)^{z-1}\sum_{k=0}^{z-1} \sum_{s=1}^N \Omega^k_{1,ls} {\Omega}_{1,sr} \Omega^{z-1-k}_{1,rj}\nonumber \\
    &=&\sum_{j=1}^N \sum_{z=1}^\infty a_z (x^\ast)^{z-1}\sum_{k=0}^{z-1} \sum_{s=1}^N {\Omega}_{1,is} {\Omega}_{1,sr} \sum_{j=1}^N {\Omega}^{z-1-k}_{1,rj}
\end{eqnarray}
Since $\boldsymbol{I} \vec 1=\boldsymbol{\Omega}_1 \vec 1=\vec 1$, we find 
\begin{eqnarray}
    {J}^{(nc,1)}_{ir}(\vec X^\ast)&=& \sum_{z=1}^\infty a_z (x^\ast)^{z-1}\sum_{k=0}^{z-1} {\Omega}_{1,ir}\nonumber\\
    &=& \sum_{z=1}^\infty z a_z (x^\ast)^{z-1} {\Omega}_{1,ir}=f^\prime(x^\ast) {\Omega}_{1,ir}
\end{eqnarray}
therefore, in these conditions the first part of the Jacobian takes the same expression as for the standard commutative map. Summing the second term \eqref{eq:jacsecondpart}, finally yields for both maps
\begin{eqnarray}
    {J}_{ir}(\vec X^\ast)
    &=&f^\prime(x^\ast){\Omega}_{1,ir}-\alpha (\boldsymbol{I}-\boldsymbol{\Omega}_1)_{ir}=-\alpha {\delta}_{ir}+(f^\prime(x^\ast)+\alpha){\Omega}_{1,ir}\nonumber \\
    &=&-\alpha {\delta}_{ir}+\frac{1}{N}(f^\prime(x^\ast)+\alpha).
    \label{eq:jac1}
\end{eqnarray}

%For the sake of generality, we consider here a generalized version of this dynamics, of the form.
\end{proof}

At this point we can start to draw some partial conclusions. In fact, we can use the properties of the fixed points of the target system to understand how these are transformed by the embedding procedure.
The target system equilibrium is unstable if $f^\prime(x^\ast)>0$, while it is stable for $f^\prime(x^\ast)<0$. Finally, $f^\prime(x^\ast)=0$ corresponds to a saddle. Since any scalar dynamical system is conservative, it can be expressed as
\begin{eqnarray}
    \frac{dx}{dt}=f(x)=-\frac{\partial V(x)}{\partial x},
\end{eqnarray}
where $V$ is a potential function. The extrema $x^\ast$ of $V(x)$ correspond to minima, maxima and saddles, as we show in Fig. \ref{fig:onedfig}.

For the PEDS $\mathcal O=(f(x),\boldsymbol{\Omega}_1,-\alpha (\boldsymbol{I}-\boldsymbol{\Omega}_1)\vec X,\vec 1,N)$, Corollary \ref{cor:onedimpedsmf} implies that the Jacobian spectrum follows from the spectral properties of $\boldsymbol{\Omega}_1$. In fact, $\boldsymbol{\Omega}_1$ has one eigenvalue equal to $1$, and $N-1$ identical, null eigenvalues. Thus,  the spectrum $\Lambda$ of the Jacobian at $\vec X^*=x^\ast \vec 1$ is given by
\begin{eqnarray}
    \Lambda(\boldsymbol{J}(x^\ast))=\{\{-\alpha\}_{N-1},\{f^\prime(x^\ast)\}_1\},
\end{eqnarray}
i.e. $-\alpha$ with multiplicity $N-1$, and $f'(x^\ast)$ with multiplicity 1.

\begin{figure}
    \centering
    \includegraphics[scale=0.5]{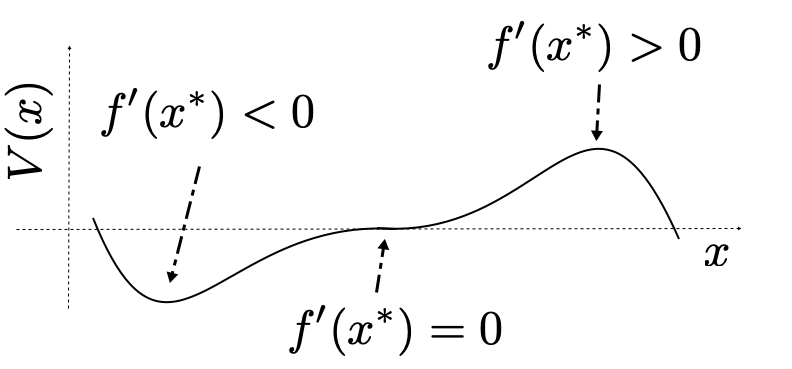}
    \caption{Interpreting the one dimensional dynamical system as the gradient of a potential, via $f(x)=-\partial V(x)/\partial x$.}
    \label{fig:onedfig}
\end{figure}

We can therefore carry out a stability analysis of the PEDS fixed point $\vec X^\ast$ as follows:
\begin{eqnarray}
    \vec X^*~\text{is}~\begin{cases}
    \text{stable} & \text{if } x^*\text{ is stable},\\
    \text{a saddle point} & \text{if } x^*\text{ is a saddle point}, \\
    \text{a saddle point} & \text{if } x^*\text{ is unstable}.
    \end{cases}
    \label{eq:1}
\end{eqnarray}
We thus see what are the benefits of the PEDS from the point of view of the target system. While in the scalar case ``barriers" can be present, these can be made to disappear via the PEDS. Although this specific feature is peculiar to scalar target systems, this result will later be useful also for vector target systems. A graphical representation is shown in Fig. \ref{fig:onedfig2}.

We can conclude that the PEDS procedure $\mathcal O=(f(x),\boldsymbol{\Omega}_1,-\alpha (\boldsymbol{I}-\boldsymbol{\Omega}_1)\vec X,\vec 1,N)$ preserves stable and saddle fixed points of the target dynamics, while it turns unstable fixed points into saddle points.

 Let us now consider the Jacobian properties in presence of the generalized decay functions in \eqref{eq:genab}. A simple generalization of \eqref{eq:jacsecondpart} yields
\begin{eqnarray}
    {J}^{(2)}_{ir}(\vec X^*)=\begin{cases}
    -\Big(\boldsymbol{D}(\boldsymbol{I}-\boldsymbol{\Omega})\Big)_{ir} & \text{generalization A}\\[1ex]
    -\Big((\boldsymbol{I}-\boldsymbol{\Omega})\boldsymbol{D}(\boldsymbol{I}-\boldsymbol{\Omega})\Big)_{ir} & \text{generalization B}.
    \end{cases}
\end{eqnarray}
Therefore, the full Jacobian for $\boldsymbol{\Omega}=\boldsymbol{\Omega}_1$ and $\vec b=\vec 1$, becomes 
\begin{eqnarray}
    \boldsymbol{J}(\vec X^*)=f'(x^*) \boldsymbol{\Omega}_1 - \begin{cases}
    \boldsymbol{D}(\boldsymbol{I}-\boldsymbol{\Omega}_1) & \text{generalization A}\\[1ex]
    (\boldsymbol{I}-\boldsymbol{\Omega}_1)\boldsymbol{D}(\boldsymbol{I}-\boldsymbol{\Omega}_1) & \text{generalization B}.
    \end{cases}
\end{eqnarray}

We wish, now, to determine the spectrum  $\Lambda(\boldsymbol{J}(\vec X^*))$. 
We consider the two generalizations separately:
\begin{itemize}
    \item Generalization B. As a consequence of the identities $\boldsymbol{\Omega}(\boldsymbol{I}-\boldsymbol{\Omega})=(\boldsymbol{I}-\boldsymbol{\Omega})\boldsymbol{\Omega}=\boldsymbol{0}$, we can deduce $[\boldsymbol{J}^{(1)},\boldsymbol{J}^{(2)}]=\boldsymbol{0}$ $\forall \boldsymbol{D}$. Thus, both Jacobian components can be diagonalized in the same basis assembled in matrix $\boldsymbol{T}$, such that $\boldsymbol{T}\boldsymbol{\Omega} \boldsymbol{T}^{-1}=\boldsymbol{D}_{1}$ and $\boldsymbol{T}(\boldsymbol{I}-\boldsymbol{\Omega})\boldsymbol{D} (\boldsymbol{I}-\boldsymbol{\Omega}) \boldsymbol{T}^{-1} =\boldsymbol{D}_{2}$. Then the eigenvalues are given by the elements of the diagonal matrix $\boldsymbol{D}_1+\boldsymbol{D}_2$. Since it is not hard to see that $\text{Span}(\boldsymbol{\Omega})= \text{Ker}((\boldsymbol{I}-\boldsymbol{\Omega})\boldsymbol{D}(\boldsymbol{I}-\boldsymbol{\Omega})) $ and $\text{Span}(\boldsymbol{\Omega})\cup \text{Span}((\boldsymbol{I}-\boldsymbol{\Omega})\boldsymbol{D}(\boldsymbol{I}-\boldsymbol{\Omega}))=\mathbb{R}^N $ with $\text{Span}(\boldsymbol{\Omega})\cap \text{Span}((\boldsymbol{I}-\boldsymbol{\Omega})\boldsymbol{D}(\boldsymbol{I}-\boldsymbol{\Omega}))=\emptyset$, we can focus on the eigenvalues of the two addends of $\boldsymbol{J}$. For $\boldsymbol{J}^{(1)}$, there are $M$ eigenvalues equal to 0 and $N-M$ identical eigenvalues $\lambda=f'(x^*)$. For $\boldsymbol{J}^{(2B)}=(\boldsymbol{I}-\boldsymbol{\Omega})\boldsymbol{D}(\boldsymbol{I}-\boldsymbol{\Omega})$, there are $N-M$ null eigenvalues, while the remaining $M$ eigenvalues satisfy $0 < \lambda\leq \max \{D_{ii}\}$. Clearly, $M$ is determined by the cardinality of $\text{Span}(\boldsymbol{\Omega})$, equal to 1 for $\boldsymbol{\Omega}_1$.

    \item Generalization A. This case is slightly more complicated, since the Jacobian is not symmetric and, thus, its eigenvalues can be complex. We can provide some results exploiting  Gerschgorin's theorem 
    \begin{eqnarray}
    |\lambda-{J}_{ii}|\leq \sum_{k\neq i} |{J}_{ki}|.
    \end{eqnarray}
    Since $\boldsymbol{\Omega}=\boldsymbol{\Omega}_1$ and $\vec b=\vec 1$, we have
    \[
    {J}_{ii}=\frac{1}{N} f'(x^*)-\frac{N-1}{N} D_{ii} \qquad \sum_{k\neq i} |{J}_{ki}|=\frac{N-1}{N} |f'(x^*)+D_{ii}|=R_i
    \]
    Let $\bar D=\text{max}_i |D_{ii}|$ and $\bar R=\frac{N-1}{N}|f'(x^*)|$.  
    It follows that the eigenvalues $\lambda\in \Lambda(\boldsymbol{J})$ must be enclosed, in the complex plane, in circles of radius $\bar R$ and center $z_i=\frac{1}{N} f'(x^*)-\frac{N-1}{N} D_{ii}$.
\end{itemize}

\begin{figure}
    \centering
    \includegraphics[scale=0.5]{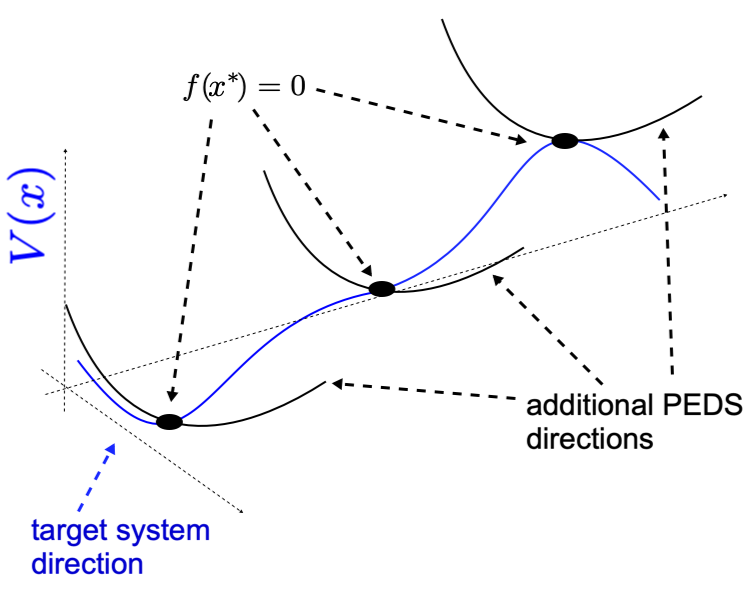}
    \caption{Interpreting the $N$ dimensional PEDS dynamical system fixed points, from the point of view of the potential in Fig.~\ref{fig:onedfig} (blue curve). Notice that while the PEDS procedure preserves the extrema of potential $V(x)$, the embedded dynamical system cannot, in general, be expressed as the gradient of a generalized potential.}
    \label{fig:onedfig2}
\end{figure}

\subsubsection{General case: Vector target system}
The analysis of the PEDS embedding for scalar target systems carried out in the previous section showed that exploiting the uniform mean field projector and the standard or generalized decay functions, the embedding Jacobian can be fully characterized on the basis of the features of the target system fixed points (stable, unstable, or neutral). We have obtained this result both for the standard commutative map, in which essentially one ``mixes" linearly the dynamical systems, and in the case of the standard non-commuting map, in which the mix is non-trivial. The difference between the two is, thus, essentially contained in the embedding intermediate dynamics.

While in higher dimensions the nature of the questions to be answered is quite similar, the derivations are technically more challenging. The reason lies in the ordering, that, as mentioned earlier, does play a role in how the PEDS is defined. However, the case $\boldsymbol{\Omega}=\boldsymbol{\Omega}_1$ still makes it possible to carry out an almost entirely analytical derivation, even if at least some results can be proved for a more general projector structure. %In fact, generally $\boldsymbol{\Omega} \boldsymbol{X} \boldsymbol{\Omega} \boldsymbol{X}^\prime\neq \boldsymbol{\Omega} \boldsymbol{X}^\prime \boldsymbol{\Omega} \boldsymbol{X} $. As we have seen however, for $\boldsymbol{\Omega}=\boldsymbol{\Omega}_1$ the ordering does not play a role in the fixed point structure for multivariate PEDS. While the most general case will be dealt with in the next paper, here we wish to consider a particular type of multivariate dynamical system for $\boldsymbol{\Omega}\neq \boldsymbol{\Omega}_1$.

Let us focus on the following PEDS for an $m$-dimensional target system $$\mathcal O=\{\{f_1(\vec x),\cdots,f_m(\vec x)\},\boldsymbol{\Omega}_1,-\{\boldsymbol{Q}_1(\boldsymbol{\Omega}_1)\vec X,\cdots,\boldsymbol{Q}_m(\boldsymbol{\Omega}_1)\vec X\},\vec 1,N\}.$$
Therefore we consider an extended system as in \eqref{eq:ext} characterized by the mean field projector and any eligible decay functions.

Similarly to the scalar case, we also consider here the two generalizations $\boldsymbol{Q}$ to the standard decay functions defined in \eqref{eq:genab1}. The corresponding PEDS take the form $\mathcal O_A=(\{f_i(x)\},\boldsymbol{\Omega}_1,\{-\boldsymbol{D}(\boldsymbol{I}-\boldsymbol{\Omega}_1)\vec X_i\},S,\vec 1,N)$ and $\mathcal O_B=(\{f_i(x)\},\boldsymbol{\Omega}_1,\{-(\boldsymbol{I}-\boldsymbol{\Omega}_1)\boldsymbol{D}(\boldsymbol{I}-\boldsymbol{\Omega}_1)\vec X_i\},S,\vec 1,N)$, which in terms of PEDS equations, read
\begin{equation}
    \frac{d\vec X_i}{dt}=\boldsymbol{\Omega}_1 \boldsymbol{F}_i(\vec X_1,\dots,\vec X_m)\vec 1 -\boldsymbol{Q}_{i}(\boldsymbol{\Omega}_1)\vec X_i 
    \label{eq:natale}
\end{equation}
where
\begin{equation}
    \boldsymbol{Q}_{i}(\boldsymbol{\Omega}_1)=\begin{cases}
    \boldsymbol{D}_i(\boldsymbol{I}-\boldsymbol{\Omega}_1) & \text{generalization A} \\[1ex]
    (\boldsymbol{I}-\boldsymbol{\Omega}_1)\boldsymbol{D}_i(\boldsymbol{I}-\boldsymbol{\Omega}_1) & \text{generalization B}
    \end{cases}
\end{equation}
being $\boldsymbol{D}_i$ positive, diagonal matrices that make the corresponding decay function $\boldsymbol{\Omega}_1$-eligible.

For any eligible decay function, we can obtain the vector target system Jacobian for the PEDS system as follows.
%\fc{
Let us consider first the standard commuting map. We use the representation in \eqref{eq:diagoform}, so that
\begin{eqnarray}
    F^{(c)}_{i,k}(\vec X_1,\dots,\vec X_m)=f_i(X_{1,k},\dots,X_{m,k})b_k=f_i(X_{1,k},\dots,X_{m,k})
\end{eqnarray}
as $\vec b=\vec 1$. We evaluate the Jacobian in blocks, starting from component 1 as in the scalar case
\begin{eqnarray}
    (\boldsymbol{J}_{ij}^{(c,1)}(\{\vec X_i\}))_{ks}=\sum_{r=1}^N \boldsymbol{\Omega}_{1,kr} \frac{\partial f_i(X_{1,r},\dots,X_{m,r})}{\partial X_{j,s}}=\boldsymbol{\Omega}_{1,ks} \frac{\partial f_i(X_{1,s},\cdots,X_{m,s})}{\partial X_{j,s}}
\end{eqnarray}
thus, in the fixed point $\vec X_i^*=x_i^*\vec 1$ (see the multivariate banality of the mean Corollary~\ref{coro:multibanality}) we find
\begin{eqnarray}
\boldsymbol{J}_{ij}^{(c)}(\{\vec X_i^*\})=\boldsymbol{\Omega}_1 f'_{i,x_j}(\vec x^*)+\delta_{ij} \boldsymbol{Q}_i(\boldsymbol{\Omega}_1).
\end{eqnarray}
where $f'_{i,x_j}=\partial f_i(\vec x)/\partial x_j$. In other words, the Jacobian in the equilibria can be built as
\begin{eqnarray}
% \footnotesize
     \begin{pmatrix}
     f'_{1,x_1}(\vec x^*) \boldsymbol{\Omega}_1+\boldsymbol{Q}_1(\boldsymbol{\Omega}_1) & f'_{1,x_2}(\vec x^*) \boldsymbol{\Omega}_1 %& \cdots 
     & \cdots & f'_{1,x_n}(\vec x^*) \boldsymbol{\Omega}_1\\
     f'_{2,x_1}(\vec x^*) \boldsymbol{\Omega}_1 & f'_{2,x_2}(\vec x^*) \boldsymbol{\Omega}_1+\boldsymbol{Q}_2(\boldsymbol{\Omega}_1) %& f'_{2,x_3}(\vec x^*) \boldsymbol{\Omega}_1 
     &  \cdots &\vdots \\
      \vdots & \vdots %& \ddots 
      & \vdots & \vdots \\
      f'_{m,x_1}(\vec x^*) \boldsymbol{\Omega}_1 & \cdots %& \cdots 
      & \cdots & f'_{m,x_m}(\vec x^*) \boldsymbol{\Omega}_1+\boldsymbol{Q}_m(\boldsymbol{\Omega}_1)
     \end{pmatrix}%\nonumber \\
     \label{eq:myfirstJacobian}
 \end{eqnarray}
 where each block is of size $N\times N$.

Surprisingly, the same result holds also for the mixed and standard non-com\-mu\-ta\-tive maps. We start by proving this in the mixed commuting case, where ordering is immaterial.
\begin{proposition}\label{prop:generale}
Let $\mathcal O=(\{ f_1(\vec x),\cdots,f_m(\vec x)\},\boldsymbol{\Omega}_1,\{\boldsymbol{Q}_i(\boldsymbol{\Omega}_1) \vec X_i\},S,\vec 1,N)$ be the PEDS built on a mixed commutative map, considering the decay functions $\boldsymbol{Q}_i(\boldsymbol{\Omega}_1) \vec X$ assumed to be  $\boldsymbol{\Omega}_1$-eligible. Then, for any ordering $S$, the  Jacobian matrix, evaluated at the equilibrium $\vec X_i^*=x_i^* \vec 1$ being $\vec x$ an equilibrium of the target system, is given by
\begin{eqnarray}
\boldsymbol{J}_{ij}^{(mc)}(\{\vec X_i^*\})=\boldsymbol{\Omega}_1 f'_{i,x_j}(\vec x^*)+\delta_{ij} \boldsymbol{Q}_i(\boldsymbol{\Omega}_1).
\end{eqnarray}
\end{proposition}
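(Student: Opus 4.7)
The plan is to split the Jacobian into a linear decay contribution and a nonlinear dynamical contribution, dispose of the first by inspection, and attack the second by a term-by-term analysis of the mixed-commutative series at the specified equilibrium. The decay block $-\boldsymbol{Q}_i(\boldsymbol{\Omega}_1)\vec X_i$ is linear in $\vec X_i$ and independent of $\vec X_{j\neq i}$, so it contributes exactly the diagonal block $\delta_{ij}\boldsymbol{Q}_i(\boldsymbol{\Omega}_1)$ as in the statement. The whole content of the proposition therefore reduces to showing that
\[
\frac{\partial\bigl(\boldsymbol{\Omega}_1\boldsymbol{F}_i^{(mc)}(\vec X_1,\dots,\vec X_m)\vec 1\bigr)}{\partial \vec X_j}\bigg|_{\vec X_i^\ast=x_i^\ast\vec 1}=f'_{i,x_j}(\vec x^\ast)\,\boldsymbol{\Omega}_1.
\]

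To this end I would write the generic term of the mixed-commutative series in the form $a_{i,k;j_1,\dots,j_m}(\boldsymbol{\Omega}_1\boldsymbol{Y})^k$ with diagonal $\boldsymbol{Y}=(\boldsymbol{X}_1^{j_1}\cdots\boldsymbol{X}_m^{j_m})^{1/k}$, and apply the non-commutative Leibniz rule
\[
\frac{\partial}{\partial X_{j,s}}(\boldsymbol{\Omega}_1\boldsymbol{Y})^k=\sum_{l=0}^{k-1}(\boldsymbol{\Omega}_1\boldsymbol{Y})^l\,\boldsymbol{\Omega}_1\bigl(\partial_{X_{j,s}}\boldsymbol{Y}\bigr)(\boldsymbol{\Omega}_1\boldsymbol{Y})^{k-1-l},
\]
observing that $\partial_{X_{j,s}}\boldsymbol{Y}$ is a diagonal matrix whose only non-zero entry lives at $(s,s)$ and evaluates, at the equilibrium, to $\eta=j_j\,c/(k\,x_j^\ast)$ with $c=(x_1^\ast)^{j_1/k}\cdots(x_m^\ast)^{j_m/k}$.

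The key collapse happens at the equilibrium. Since $\boldsymbol{Y}^\ast=c\boldsymbol{I}$, one has $\boldsymbol{\Omega}_1\boldsymbol{Y}^\ast=c\boldsymbol{\Omega}_1$, and every positive power reduces via $\boldsymbol{\Omega}_1^2=\boldsymbol{\Omega}_1$ to $(\boldsymbol{\Omega}_1\boldsymbol{Y}^\ast)^p=c^p\boldsymbol{\Omega}_1$. Using moreover the elementary identities $\boldsymbol{\Omega}_1\vec e_s\vec e_s^{\,T}\boldsymbol{\Omega}_1=N^{-1}\boldsymbol{\Omega}_1$ and $\vec e_s^{\,T}\vec 1=1$, each of the $k$ summands, once sandwiched between a left $\boldsymbol{\Omega}_1$ and a right $\vec 1$, reduces to the same vector $\eta c^{k-1}N^{-1}\vec 1$; summing the $k$ identical contributions gives the $(p,s)$ block entry $k\eta c^{k-1}/N=(\Omega_1)_{ps}\,j_j(x_1^\ast)^{j_1}\cdots(x_j^\ast)^{j_j-1}\cdots(x_m^\ast)^{j_m}$. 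Reassembling the outer sums over $k$ and $(j_1,\dots,j_m)$ recognises precisely the power-series expansion of $\partial_{x_j}f_i$ evaluated at $\vec x^\ast$, which completes the argument. Independence from $S$ is automatic, because the mixed commutative map is built entirely from commuting diagonal factors and hence involves no reordering to begin with.

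The main obstacle is the bookkeeping of the non-commutative differentiation combined with the fractional exponent in $\boldsymbol{Y}=(\prod_r\boldsymbol{X}_r^{j_r})^{1/k}$: away from equilibria of the uniform form $x_i^\ast\vec 1$, the Leibniz sum and the $1/k$-power would generically produce cumbersome cross terms with no available collapse. The saving observation is that, precisely at $\vec X_i^\ast=x_i^\ast\vec 1$, the matrix $\boldsymbol{Y}^\ast$ is proportional to $\boldsymbol{I}$, so $\boldsymbol{\Omega}_1\boldsymbol{Y}^\ast$ is a scalar multiple of the rank-one projector $\boldsymbol{\Omega}_1$, and all $k$ terms of the power-rule become identical. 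This is what makes the final reconstruction of $f'_{i,x_j}(\vec x^\ast)$ from the Taylor coefficients $a_{i,k;j_1,\dots,j_m}$ go through cleanly.
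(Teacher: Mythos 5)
Your proposal is correct and follows essentially the same route as the paper's proof: both apply the matrix power rule $\partial\boldsymbol{B}^k=\sum_l\boldsymbol{B}^l(\partial\boldsymbol{B})\boldsymbol{B}^{k-1-l}$ to each term $(\boldsymbol{\Omega}_1(\boldsymbol{X}_1^{j_1}\cdots\boldsymbol{X}_m^{j_m})^{1/k})^k$, exploit that at $\vec X_i^\ast=x_i^\ast\vec 1$ the inner diagonal matrix is a multiple of $\boldsymbol{I}$ so every factor collapses onto the rank-one projector $\boldsymbol{\Omega}_1$, and then reassemble the $k$ identical summands into the Taylor series of $f'_{i,x_j}(\vec x^\ast)$. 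Your outer-product notation $\vec e_s\vec e_s^{\,T}$ is just a coordinate-free rewriting of the paper's index computation, and your treatment of the decay block and of ordering independence matches the paper's.
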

\begin{proof}
We aim at evaluating the Jacobian of\footnote{As usual, we use a general projection operator wherever possible.}
\begin{eqnarray}
    \boldsymbol{F}_i^{(mc)}(\boldsymbol{X}_1,\cdots,\boldsymbol{X}_m)=a_{i,0}\boldsymbol{I}+\sum_{k=1}^\infty \sum_{j_1,\cdots,j_m}^k a_{k,i;j_1,\cdots,j_m} (\boldsymbol{\Omega}(\boldsymbol{X}_1^{j_1}\cdots \boldsymbol{X}_m^{j_m})^{1/k})^k.
\end{eqnarray}
as it is needed for the estimation of the first Jacobian component
\begin{eqnarray}
    (\boldsymbol{J}^{(mc,1)}_{ij}(\{\vec X_i\}))_{st}&=&\frac{\partial(\boldsymbol{F}_i^{(mc)}(\boldsymbol{X}_1,\cdots,\boldsymbol{X}_m)\vec 1)_s}{\partial X_{j,t}}\nonumber \\
    &=&\sum_{k=1}^\infty \sum_{j_1,\cdots,j_m}^k a_{k,i;j_1,\cdots,j_m} \left( \frac{\partial (\boldsymbol{\Omega}(\boldsymbol{X}_1^{j_1}\cdots \boldsymbol{X}_m^{j_m})^{1/k})^k}{\partial X_{j,t}} \vec 1\right)_s.
    \label{eq:angelo}
\end{eqnarray}
Therefore, we need to evaluate 
\begin{eqnarray}
    \frac{\partial (\boldsymbol{\Omega}(\boldsymbol{X}_1^{j_1}\cdots \boldsymbol{X}_m^{j_m})^{1/k})^k}{\partial X_{j,t}}
\end{eqnarray}
We exploit the identity
\begin{eqnarray}
    \frac{\partial \boldsymbol{B}^k}{\partial X}=\sum_{z=0}^{k-1} \boldsymbol{B}^{k-1-z} \frac{\partial \boldsymbol{B}}{\partial X}\boldsymbol{B}^z \qquad k\ge 1
    \label{eq:identity}
\end{eqnarray}
valid for any matrix $\boldsymbol{B}$, obtaining
\begin{eqnarray}
\frac{\partial (\boldsymbol{\Omega}(\boldsymbol{X}_1^{j_1}\cdots \boldsymbol{X}_m^{j_m})^{1/k})^k}{\partial X_{j,t}}
&=&\sum_{z=0}^{k-1} \Big(\boldsymbol{\Omega}(\boldsymbol{X}_1^{j_1}\cdots \boldsymbol{X}_m^{j_m})^{1/k}\Big)^{k-1-z} \left(\boldsymbol{\Omega} 
\frac{\partial \big((\boldsymbol{X}_1^{j_1}\cdots \boldsymbol{X}_m^{j_m})^{1/k}\big)}{\partial X_{j,t}}\right)
 \nonumber \\
    & & \ \ \ \  \ \times \Big(\boldsymbol{\Omega}(\boldsymbol{X}_1^{j_1}\cdots \boldsymbol{X}_m^{j_m})^{1/k} \Big)^z
    \label{eq:branduardi}
\end{eqnarray}
where, since $\boldsymbol{X}_i$ are diagonal, we have
$$
\left(\frac{\partial \big((\boldsymbol{X}_1^{j_1}\cdots \boldsymbol{X}_m^{j_m})^{1/k}\big)}{\partial X_{j,t}}\right)_{pq}=
\delta_{pq}\delta_{pt} \left( \prod_{r\neq j}X_{r,t}^{j_t/k}  \right)\frac{j_j}{k} X_{j,t}^{j_j/k-1}.$$
Now, taking into account that $\boldsymbol{\Omega}=\boldsymbol{\Omega}_1$ and that the fixed points are defined by $\vec X_i^*=x_i^*\vec 1$ (because of the multivariate banality of the mean Corollary~\ref{coro:multibanality}), we find
\begin{eqnarray}
    \left.\big((\boldsymbol{\Omega_1}(\boldsymbol{X}_1^{\alpha_1}\cdots \boldsymbol{X}_m^{\alpha_m})^{1/\eta})^\beta\big)_{ij} \right|_{\{\vec X_i^*\}}&=& {\Omega}_{1,ij} \prod_{i=1}^m (x_i^*)^{\frac{\beta}{\eta} \alpha_i}\\
    \left.\delta_{pq}\delta_{pt} \left( \prod_{r\neq j}X_{r,t}^{j_t/k}  \right)\frac{j_j}{k} X_{j,t}^{j_j/k-1}\right|_{\{\vec X_i^*\}}&=&\delta_{pq}\delta_{pt} \left( \prod_{r\neq j}(x_{r}^*)^{j_t/k}  \right)\frac{j_j}{k} (x_{j}^*)^{j_j/k-1}
\end{eqnarray}
thus, substituting into \eqref{eq:branduardi}
\begin{align}
    &\left.\left(\frac{\partial (\boldsymbol{\Omega}_1(\boldsymbol{X}_1^{j_1}\cdots \boldsymbol{X}_m^{j_m})^{1/k})^k}{\partial X_{j,t}}\right)_{pq}\right|_{\{\vec X_i^*\}} =\sum_{b,c,d=1}^N \sum_{z=0}^{k-1} \left.\big((\boldsymbol{\Omega_1}(\boldsymbol{X}_1^{j_1}\cdots \boldsymbol{X}_m^{j_m})^{1/k})^{k-1-z}\big)_{pb} \right|_{\{\vec X_i^*\}} \nonumber\\[1ex]
    &\quad \times \left.\left(\Omega_{1,bc}
    \frac{\partial \left((\boldsymbol{\Omega}_1(\boldsymbol{X}_1^{j_1}\cdots \boldsymbol{X}_m^{j_m})^{1/k}\right)_{cd}}{\partial X_{j,t}}\right|_{\{\vec X_i^*\}}\right) \left.\big((\boldsymbol{\Omega_1}(\boldsymbol{X}_1^{j_1}\cdots \boldsymbol{X}_m^{j_m})^{1/k})^{z}\big)_{dq} \right|_{\{\vec X_i^*\}}\nonumber\\[1ex]
    &\quad = j_j(x_j^*)^{j_j-1}\prod_{r\neq j} (x_r^*)^{j_t}\sum_{b,c,d=1}^N {\Omega}_{1,pb} {\Omega}_{1,bc} \delta_{cd}\delta_{ct} {\Omega}_{1,dq}\nonumber\\[1ex]
    &\quad = j_j(x_j^*)^{j_j-1}\prod_{r\neq j} (x_r^*)^{j_t}\Omega_{1,pt}\Omega_{1,tq}
\end{align}
where each element of $\boldsymbol{\Omega}_1$ is equal to $1/N$. Substituting into \eqref{eq:angelo} we recognize the series expansion of $f'_{i,x_j}(\vec x^*)$. This leads to the final expression
\begin{equation}
    (\boldsymbol{J}^{(mc,1)}_{ij}(\{\vec X_i^*\}))_{st} = f'_{i,x_j}(\vec x^*) \Omega_{1,st}.
\end{equation}
Taking into account the second Jacobian component, i.e. the Jacobian of the decay functions, finally yields the result to be proved
\begin{equation}
    \boldsymbol{J}^{(mc)}_{ij}(\{\vec X_i^*\}) = f'_{i,x_j}(\vec x^*) \boldsymbol{\Omega}_{1}+\delta_{ij}\boldsymbol{Q}_i(\boldsymbol{\Omega}_1).
\end{equation}
\end{proof}

Finally, let us turn to the non-commutative map Jacobian, for which we have the following result.
\begin{proposition}
Let $\mathcal O=(\{ f_1(\vec x),\cdots,f_m(\vec x)\},\boldsymbol{\Omega}_1,\{\boldsymbol{Q}_i(\boldsymbol{\Omega}_1) \vec X_i\},S,\vec 1,N)$ be the PEDS built on a non-commutative map, considering the decay functions $\boldsymbol{Q}_i(\boldsymbol{\Omega}_1) \vec X$ assumed to be  $\boldsymbol{\Omega}_1$-eligible. Then, for any ordering $S$, the Jacobian matrix, evaluated at the equilibrium $\vec X_i^*=x_i^* \vec 1$, is given by
\begin{eqnarray}
\boldsymbol{J}_{ij}^{(nc)}(\{\vec X_i^*\})=\boldsymbol{\Omega}_1 f'_{i,x_j}(\vec x^*)+\delta_{ij} \boldsymbol{Q}_i(\boldsymbol{\Omega}_1).
\end{eqnarray}
\end{proposition}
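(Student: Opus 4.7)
The plan is to mirror closely the argument of Proposition~\ref{prop:generale}, splitting the Jacobian block into the embedding contribution $\boldsymbol{\Omega}_1\, \partial(\boldsymbol{F}_i^{(nc)}\vec 1)/\partial X_{j,t}$ and the decay contribution, which is linear in $\vec X_i$ and ordering-independent. The decay part evaluates exactly as in the commutative and mixed commutative cases and contributes the $\delta_{ij}\boldsymbol{Q}_i(\boldsymbol{\Omega}_1)$ block. The entire work therefore consists of showing that, at the fixed point $\vec X_i^{*} = x_i^{*}\vec 1$ (which exists by Corollary~\ref{coro:multibanality}), the embedding block reduces to $\boldsymbol{\Omega}_1 f'_{i,x_j}(\vec x^{*})$ independently of the chosen ordering $S$.

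For the embedding contribution I would expand $\boldsymbol{F}_i^{(nc)}$ via \eqref{eq:standnoncomm} and, for each monomial $(j_1,\dots,j_m)$ and each permutation $\sigma$ entering the ordering with weight $o_{\sigma(1)\cdots\sigma(m)}$, differentiate $\prod_{l=1}^m (\boldsymbol{\Omega}_1 \boldsymbol{X}_{\sigma(l)})^{j_{\sigma(l)}}$ with respect to $X_{j,t}$ by Leibniz. Only the unique factor at position $l^{*}$ with $\sigma(l^{*})=j$ contributes, and I would use the identity \eqref{eq:identity} to expand $\partial(\boldsymbol{\Omega}_1\boldsymbol{X}_j)^{j_j}/\partial X_{j,t} = \sum_{z=0}^{j_j-1} (\boldsymbol{\Omega}_1\boldsymbol{X}_j)^{j_j-1-z}\boldsymbol{\Omega}_1 \boldsymbol{E}_t (\boldsymbol{\Omega}_1\boldsymbol{X}_j)^z$, where $\boldsymbol{E}_t$ denotes the diagonal matrix with a single $1$ in entry $(t,t)$. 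The result is, for each $(\sigma, z)$, a triple sandwich: the outer projector $\boldsymbol{\Omega}_1$ times a left chain of $l^{*}-1$ factors, times one $z$-summand, times a right chain of $m-l^{*}$ factors acting on $\vec 1$.

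The collapse at the fixed point is what makes the ordering irrelevant. I would use $\boldsymbol{X}_i^{*} = x_i^{*}\boldsymbol{I}$ together with $(\boldsymbol{\Omega}_1\boldsymbol{X}_i^{*})^k = (x_i^{*})^k \boldsymbol{\Omega}_1$ for $k\geq 1$ and $\boldsymbol{\Omega}_1^2 = \boldsymbol{\Omega}_1$ to reduce the left chain together with the outer projector to $c_L\, \boldsymbol{\Omega}_1$, with $c_L = \prod_{l<l^{*}}(x_{\sigma(l)}^{*})^{j_{\sigma(l)}}$; and $(\boldsymbol{\Omega}_1\boldsymbol{X}_i^{*})\vec 1 = x_i^{*}\vec 1$ to reduce the right chain applied to $\vec 1$ to $c_R\, \vec 1$, with $c_R = \prod_{l>l^{*}}(x_{\sigma(l)}^{*})^{j_{\sigma(l)}}$. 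The hard step is then the middle slice: I would verify directly from $\Omega_{1,pq}=1/N$ that $\boldsymbol{\Omega}_1\boldsymbol{E}_t\boldsymbol{\Omega}_1 = \tfrac{1}{N}\boldsymbol{\Omega}_1$ and $\boldsymbol{\Omega}_1\boldsymbol{E}_t\vec 1 = \tfrac{1}{N}\vec 1$, and then show that both the boundary case $z=0$ (where $\boldsymbol{E}_t$ lands adjacent to $\vec 1$) and the interior cases $z\geq 1$ (where $\boldsymbol{E}_t$ is sandwiched between two $\boldsymbol{\Omega}_1$'s) produce the same scalar factor $\tfrac{1}{N}(x_j^{*})^{j_j-1}\vec 1$ when combined with the left and right chains. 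I expect this to be the main obstacle, as different values of $z$ and different permutations $\sigma$ place $\boldsymbol{\Omega}_1\boldsymbol{E}_t$ at structurally different positions, and only the mean-field algebra makes all outcomes coincide.

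Once this collapse is established, summing over the $j_j$ values of $z$ yields a factor $j_j(x_j^{*})^{j_j-1}$, and the full contribution of a single monomial and a single permutation becomes $\tfrac{1}{N}\, o_{\sigma(1)\cdots \sigma(m)}\, j_j \prod_{r\neq j}(x_r^{*})^{j_r}\,\vec 1$, which is independent of $\sigma$ and $l^{*}$. The sum over $\sigma$ then collapses via the ordering normalization $\sum_\sigma o_{\sigma(1)\cdots\sigma(m)}=1$, proving ordering independence. Summing over monomials weighted by $a_{i,k;j_1,\dots,j_m}$ recognizes the Taylor series of $\partial f_i/\partial x_j$ at $\vec x^{*}$. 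Since every $(s,t)$ entry of the resulting block comes out equal to $\tfrac{1}{N} f'_{i,x_j}(\vec x^{*}) = \Omega_{1,st}\, f'_{i,x_j}(\vec x^{*})$, the block equals $\boldsymbol{\Omega}_1 f'_{i,x_j}(\vec x^{*})$, and combining with the decay Jacobian yields the stated formula.
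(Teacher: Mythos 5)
Your proposal is correct and follows essentially the same route as the paper's proof: Leibniz differentiation of each ordered monomial, the power-derivative identity \eqref{eq:identity}, collapse at the fixed point via the mean-field algebra $(\boldsymbol{\Omega}_1\boldsymbol{X}_i^*)^k=(x_i^*)^k\boldsymbol{\Omega}_1$ and $\Omega_{1,pt}\Omega_{1,tq}=\tfrac{1}{N}\Omega_{1,pq}$, and then the ordering normalization $\sum_\sigma o_{\sigma(1)\cdots\sigma(m)}=1$ to recognize the Taylor series of $f'_{i,x_j}$. Your explicit treatment of the $\boldsymbol{E}_t$ slice and of the boundary versus interior placements of the derivative is a slightly more careful rendering of the same computation the paper carries out in index notation.
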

\begin{proof}
As we aim at demonstrating the independence of the Jacobian from the ordering $S$ (at the fixed points), we consider the formalism introduced in Section~\ref{sec:ordering}. Given the matrix map definition \eqref{eq:vectaylorexp}, we use the general form
\begin{equation}
    \boldsymbol{F}_s(\vec X)=\sum_{k=0}^\infty \sum_{i_1,\cdots,i_j=0 }^ja_{s,k; i_1,\cdots, i_m} \{(\boldsymbol{\Omega} \boldsymbol{X}_1)^{i_1}\cdots (\boldsymbol{\Omega}  \boldsymbol{X}_m)^{i_m} \}_S
\end{equation}
that forms the basis for the evaluation of the first Jacobian component. Writing, to simplify the notation, $o_{\sigma(s_1) \cdots \sigma(s_m)}=o_{\vec \sigma}$, we compute the derivatives
\begin{align}
    &\frac{\partial \phantom{X_{a,b}}}{\partial X_{a,b}} \{(\boldsymbol{\Omega}\boldsymbol{X}_1)^{i_1}\cdots (\boldsymbol{\Omega}  \boldsymbol{X}_m)^{i_m} \}_S \nonumber\\
    &\qquad =\sum_{\sigma\in \mathcal S(m)} o_{\vec \sigma} \frac{\partial \phantom{X_{a,b}}}{\partial X_{a,b}}
    \left( (\boldsymbol{\Omega}  \boldsymbol{X}_{\sigma(s_1)})^{i_1}\cdots (\boldsymbol{\Omega}  \boldsymbol{X}_{\sigma(s_m)})^{i_m} \right) \nonumber\\
    &\qquad =\sum_{\sigma\in \mathcal S(m)} o_{\vec \sigma} \left( \frac{\partial \phantom{X_{a,b}}}{\partial X_{a,b}} (\boldsymbol{\Omega}  \boldsymbol{X}_{\sigma(s_1)})^{i_1} \right) \cdots (\boldsymbol{\Omega}  \boldsymbol{X}_{\sigma(s_m)})^{i_m}+\cdots \nonumber\\
    &\qquad\quad + \sum_{\sigma\in \mathcal S(m)} o_{\vec \sigma} (\boldsymbol{\Omega}  \boldsymbol{X}_{\sigma(s_1)})^{i_1}\cdots \left(\frac{\partial \phantom{X_{a,b}}}{\partial X_{a,b}} (\boldsymbol{\Omega}  \boldsymbol{X}_{\sigma(s_m)})^{i_m}\right) \nonumber\\
     &\qquad = \sum_{\sigma\in \mathcal S(m)} o_{\vec \sigma}\delta_{a,\sigma(s_1)} \left( \frac{\partial \phantom{X_{\sigma(s_1),b}}}{\partial X_{\sigma(s_1),b}} (\boldsymbol{\Omega}  \boldsymbol{X}_{\sigma(s_1)})^{i_1} \right) \cdots (\boldsymbol{\Omega}  \boldsymbol{X}_{\sigma(s_m)})^{i_m}+\cdots \nonumber\\
     &\qquad\quad + \sum_{\sigma\in \mathcal S(m)} o_{\vec \sigma} \delta_{a,\sigma(s_m)} (\boldsymbol{\Omega}  \boldsymbol{X}_{\sigma(s_1)})^{i_1}\cdots \left(\frac{\partial \phantom{X_{\sigma(s_m),b}}}{\partial X_{\sigma(s_m),b}}  (\boldsymbol{\Omega}  \boldsymbol{X}_{\sigma(s_m)})^{i_m}\right)
     \label{eq:fixedpoint}
\end{align}

Applying identity \eqref{eq:identity} to matrix $(\boldsymbol{\Omega}\boldsymbol{X}_{s_k})^{i_k}$ we find
\begin{equation}
  \frac{\partial \phantom{X_{s_k,b}}}{\partial X_{s_k,b}} \left( (\boldsymbol{\Omega}\boldsymbol{X}_{s_k})^{i_k} \right)_{ij} =  \sum_{t=0}^{i_k-1} \sum_{l=1}^N \left((\boldsymbol{\Omega} \boldsymbol{X}_{s_k})^t\right)_{il} {\Omega}_{lb} \left((\boldsymbol{\Omega} \boldsymbol{X}_{s_k})^{i_k-1-t}\right)_{bj},
\end{equation}

At this point, the multivariate banality of the mean Corollary~\ref{coro:multibanality} proves that the PEDS fixed points are given by $\boldsymbol{X}_{s_k}^*=x_{s_k}^* \boldsymbol{I}$, therefore for the mean field  projector $\boldsymbol{\Omega}=\boldsymbol{\Omega}_1$ we find
\begin{align}
    \left.\frac{\partial \phantom{X_{s_k,b}}}{\partial X_{s_k,b}} \left( (\boldsymbol{\Omega}_1\boldsymbol{X}_{s_k})^{i_k} \right)_{ij}\right|_{\{\vec X_i^*\}} &= \sum_{t=0}^{i_k-1} (x^{*}_{s_k})^{i_k-1}\sum_{s=1}^N \Omega_{1,is}^t \Omega_{1,sb} \Omega_{1,bj}^{i_k-1-t} \nonumber\\
    &= i_k(x^{*}_{s_k})^{i_k-1} \Omega_{1,ib}\Omega_{1,bj}
\end{align}

Substituting this expression into the derivative of the ordered product, yields
\begin{align}
    &\left. \frac{\partial \phantom{X_{a,b}}}{\partial X_{a,b}} \{(\boldsymbol{\Omega}_1\boldsymbol{X}_1)^{i_1}\cdots (\boldsymbol{\Omega}_1  \boldsymbol{X}_m)^{i_m} \}_S \right|_{\{\vec X_i^*\}}  \nonumber\\
    &\qquad =\sum_{\sigma\in \mathcal S(m)} o_{\vec \sigma}\delta_{a,\sigma(s_1)}\Big(i_{1} (x^{*}_{\sigma(s_1)})^{i_1-1} (\boldsymbol{\Omega}_1)_{:b}(\boldsymbol{\Omega}_1)_{b:}\Big)\nonumber\\
    &\qquad\qquad\times (x^{*}_{\sigma(s_{2})})^{i_2}\boldsymbol{\Omega}_1^{i_2}\cdots (x^{*}_{\sigma(s_m)})^{i_m}\boldsymbol{\Omega}_1^{i_m} + \cdots \nonumber \\
    &\qquad\quad+\sum_{\sigma\in \mathcal S(m)} o_{\vec \sigma}\delta_{a,\sigma(s_m)} (x^{*}_{\sigma(s_{1})})^{i_1}\boldsymbol{\Omega}_1^{i_1}(x^{*}_{\sigma(s_{2})})^{i_2}\boldsymbol{\Omega}_1^{i_2}\cdots
    \nonumber\\
    &\qquad\qquad\times (x^{*}_{\sigma(s_{m-1})})^{i_{m-1}}\boldsymbol{\Omega}_1^{i_{m-1}} \Big(i_{m} (x^{*}_{\sigma(s_m)})^{i_m-1} (\boldsymbol{\Omega}_1)_{:b}(\boldsymbol{\Omega}_1)_{b:}\Big)
    \nonumber\\
    &\qquad =\sum_{\sigma\in \mathcal S(m)} o_{\vec \sigma}\delta_{a,\sigma(s_1)}i_1(x^{*}_{\sigma(s_{1})})^{i_1-1}(x^{*}_{\sigma(s_{2})})^{i_2}\cdots (x^{*}_{\sigma(s_{m})})^{i_m}(\boldsymbol{\Omega}_1)_{:b}(\boldsymbol{\Omega}_1)_{b:} + \cdots \nonumber\\
    &\qquad\quad+\sum_{\sigma\in \mathcal S(m)} o_{\vec \sigma}\delta_{a,\sigma(s_m)}i_m (x^{*}_{\sigma(s_{1})})^{i_1}(x^{*}_{\sigma(s_{2})})^{i_2}\cdots (x^{*}_{\sigma(s_{m})})^{i_m-1}(\boldsymbol{\Omega}_1)_{:b}(\boldsymbol{\Omega}_1)_{b:}.
    \label{eq:fixedpointgen2}
\end{align}
where $(\boldsymbol{\Omega}_1)_{:b}$ and $(\boldsymbol{\Omega}_1)_{b:}$ are vectors made of the $b$-th column and row  of $\boldsymbol{\Omega}_1$, respectively. As the matrix products all collapse into the same quantity, the previous expression is independent of the ordering $S$. Therefore, we can write
\begin{equation}
    \left. \frac{\partial \phantom{X_{a,b}}}{\partial X_{a,b}} \{(\boldsymbol{\Omega}_1\boldsymbol{X}_1)^{i_1}\cdots (\boldsymbol{\Omega}_1  \boldsymbol{X}_m)^{i_m} \}_S \right|_{\{\vec X_i^*\}}=\left(\frac{\partial\phantom{x_a}}{\partial x_a}\prod_{j=1}^m (x_j^*)^{i_j}\right) (\boldsymbol{\Omega}_1)_{:b}(\boldsymbol{\Omega}_1)_{b:}.
\end{equation}
This means that, for any ordering $S$, at the fixed point the sum of the terms for the derivative with respect to the elements of each extended variable $\vec X_a$ leads, once taking into account the factor $\vec 1$ in \eqref{eq:natale}, to a scalar factor corresponding to $f'_{i,x_a}(\vec x^*)$, i.e. the corresponding element of the Jacobian of  the target system multiplied times matrix $\boldsymbol{\Omega}_1$.

Concerning the second part of the Jacobian, i.e. the derivatives of $\boldsymbol{Q}_i(\boldsymbol{\Omega}_1) \vec X_i$, the result is a block diagonal matrix of the type $\text{diag}\{\boldsymbol{Q}_i(\boldsymbol{\Omega}_1)\}$.

In summary, even in this case the full Jacobian at the fixed points follows the block structure claimed in the proposition.
\end{proof} 
We are now ready to discuss the Jacobian spectral properties irrespective of the chosen map, as the matrix is the same for all of the three maps that we consider. For the sake of simplicity, we limit the discussion to the standard decay functions $\boldsymbol{Q}_i(\boldsymbol{\Omega}_1)=-\alpha_i (\boldsymbol{I}-\boldsymbol{\Omega}_1)$, so that
\begin{align}
     \boldsymbol{J}(\{\vec X_i^*\})&=\begin{pmatrix}
     f'_{1,x_1}(\vec x^*) \boldsymbol{\Omega}_1 & f'_{1,x_2}(\vec x^*) \boldsymbol{\Omega}_1 & \cdots & \cdots & f'_{1,x_n}(\vec x^*) \boldsymbol{\Omega}_1\\
     f'_{2,x_1}(\vec x^*) \boldsymbol{\Omega}_1 & f'_{2,x_2}(\vec x^*) \boldsymbol{\Omega}_1 & f'_{2,x_3}(\vec x^*) \boldsymbol{\Omega}_1 & \cdots & f'_{2,x_n}(\vec x^*) \boldsymbol{\Omega}_1\\
     \vdots & \vdots & \ddots & \vdots & \vdots \\
      f'_{n,x_1}(\vec x^*) \boldsymbol{\Omega}_1 & \cdots & \cdots & \cdots & f'_{n,x_n}(\vec x^*) \boldsymbol{\Omega}_1
     \end{pmatrix} \nonumber \\
     &-\begin{pmatrix}
     \alpha_1 (\boldsymbol{I}-\boldsymbol{\Omega}_1) & 0 & \cdots & \cdots & 0 \\
     0 & \alpha_2 (\boldsymbol{I}-\boldsymbol{\Omega}_1) &  &  \cdots &\vdots \\
      \vdots & \vdots & \ddots & \vdots & 0 \\
      0 & \cdots & \cdots &  0 &\alpha_m (\boldsymbol{I}-\boldsymbol{\Omega}_1)
     \end{pmatrix}
     \label{eq:myfirstJacobian2}
 \end{align}
that can be cast in the following form
\begin{equation}
    \boldsymbol{J}(\{\vec X_i^*\})= (\boldsymbol{J}_m(\vec x^*)+\boldsymbol{D}^1_\alpha)\otimes \boldsymbol{\Omega}_1-\boldsymbol{D}^N_\alpha
    \label{eq:myfirstJacobian3}
\end{equation}
where $\boldsymbol{J}_m(\vec x^*)$ is the Jacobian of the target system functions $\{f_i(\vec x)\}$ evaluated at the target system equilibrium $\vec x^*$, $\otimes$ denotes matrix Kronecker product\footnote{According to the definition, the $i,j$ block of the matrix Kronecker product $A\otimes B$ is $a_{ij} B$.}, and
\[
\boldsymbol{D}^k_\alpha=\text{diag}(\underbrace{\alpha_1,\cdots,\alpha_1}_{\text{$k$ times}},\underbrace{\alpha_2,\cdots,\alpha_2}_{\text{$k$ times}},\cdots,\underbrace{\alpha_m,\cdots,\alpha_m}_{\text{$k$ times}}).
\]

The Jacobian \eqref{eq:myfirstJacobian3} is a generalization of the scalar case (see Proposition~\ref{prop:onepedsjac}). We are interested in assessing the properties of its eigenvalues.  

For the time being, we discuss the simpler case $\alpha_i\equiv \alpha$, and since it will be useful later, let us think of this Jacobian for a general $\boldsymbol{\Omega}$, only to then consider $\boldsymbol{\Omega_1}$ as a special case.\footnote{To motivate this generalized discussion, we briefly anticipate the result of an upcoming paper, in which we show that \eqref{eq:jacgen} is in fact the first term of the representation obtained for the Jacobian of a general projector $\boldsymbol{\Omega}$. This general case is, however, beyond the scope of this paper.}  Let us therefore discuss the spectrum of 
\begin{eqnarray}
    \boldsymbol{J}(\{\vec X_i^*\})=(\boldsymbol{J}_m(\vec x^*)+\alpha \boldsymbol{I}_m)\otimes \boldsymbol{\Omega}-\alpha \boldsymbol{I}_{Nm}.
    \label{eq:jacgen}
\end{eqnarray}
where $\boldsymbol{I}_q$ is the identity matrix of size $q\times q$.
Being $\lambda_i$ the eigenvalues of $\boldsymbol{J}_m(\vec x^*)$, the eigenvalues of $\boldsymbol{J}_{m}(\vec x^*)+\alpha \boldsymbol{I}_m$ are given by $\lambda_i+\alpha$, and $-\alpha$ are the eigenvalues of $-\alpha \boldsymbol{I}_{Nm}$ for the whole matrix. Let us assume that $\boldsymbol{\Omega}$ has $k$ unitary eigenvalues ($k=1$ for $\boldsymbol{\Omega}=\boldsymbol{\Omega}_1$), while the remaining $N-k$ eigenvalues are equal to $0$. 
Then  matrix $\boldsymbol{J}(\{\vec X_i^*\})$ has eigenvalues \cite{Bernstein}
\begin{itemize}
    \item $\lambda_i$, $1\leq i\leq m$ with multiplicity $k$
    \item $-\alpha$ with multiplicity $m(N-k)$.
\end{itemize}
As a consequence, if $\lambda_i<0$ $\forall i$, then a stable equilibrium point for the target system is still stable in the PEDS embedding. Similarly, if the equilibrium  point is unstable, or if at least some $i$ values exist for which  $\lambda_i>0$, then it becomes a saddle point for the extended system, being characterized by $m(N-k)$ negative eigenvalues and $mk$ positive eigenvalues. 
Thus, the following classification holds
\begin{eqnarray}
    \{\vec X^*_i\}=\begin{cases}
    \text{stable} & \text{if $\vec x^*$ is stable},\\
    \text{saddle point} & \text{if $\vec x^*$ is a saddle point}, \\
    \text{saddle point} & \text{if $\vec x^*$ is unstable}.
    \end{cases}
    \label{eq:2}
\end{eqnarray}

This analysis suggests that the presence of ``barriers'' in the target system, characterized by unstable equilibria, can (in principle) be overcome in the PEDS embedding via their transformation into saddle points in the extended system.

\subsection{Dynamical ordering-equivalence for the uniform mean field projector}

The uniform mean field projector has various properties that are interesting \textit{per se}. In particular, we wish to show here that not only the fixed points, but also the embedding dynamics is ordering independent.

For this purpose, consider a PEDS of the form $$\mathcal O_r=(\{f_i(x_1,\cdots,x_m)\},\boldsymbol{\Omega_1},\{\boldsymbol{G}_i(\boldsymbol{\Omega}_1)\vec X_i\},S_r, \vec 1,N),$$ and for arbitrary $\boldsymbol{\Omega}_1$-eligible decay functions. Given the PEDS above, the standard non-commutative matrix embedding is given by
\begin{eqnarray}
   \vec F_s= \sum_{k=0}^\infty \sum_{i_1,\cdots,i_j }^ka_{s,k; i_1\cdots i_m} \{(\boldsymbol{\Omega}_1  \boldsymbol{X}_1)^{i_1}\cdots (\boldsymbol{\Omega}_1  \boldsymbol{X}_m)^{i_m} \}_{S_r}\vec 1
\end{eqnarray}

We prove the following
\begin{proposition}\label{prop:independence}
The quantity 
$$\{(\boldsymbol{\Omega}_1  \boldsymbol{X}_1)^{i_1}\cdots (\boldsymbol{\Omega}_1  \boldsymbol{X}_m)^{i_m} \}_{S_r}\vec 1$$
is  independent of the ordering $S_r$, for any $i_1,\cdots, i_m$.
\end{proposition}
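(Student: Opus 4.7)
The plan is to show that for the uniform mean field projector, every monomial in the ordering reduces to the \emph{same scalar multiple} of $\vec 1$, so the weighted sum $\sum_\sigma o_\sigma(\cdots)$ collapses to a single ordering-independent expression regardless of the coefficients $o_\sigma$.

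The central tool is the following identity, which I would prove first by direct computation on matrix entries: for any pair of diagonal matrices $\boldsymbol{X}_a,\boldsymbol{X}_b$,
\begin{equation}
(\boldsymbol{\Omega}_1 \boldsymbol{X}_a)(\boldsymbol{\Omega}_1 \boldsymbol{X}_b)=\langle X_a\rangle\,\boldsymbol{\Omega}_1 \boldsymbol{X}_b.
\end{equation}
This follows because $(\boldsymbol{\Omega}_1\boldsymbol{X}_a)_{pq}=X_{a,q}/N$, and summing over the inner index produces $\frac{1}{N}\sum_r X_{a,r}=\langle X_a\rangle$. Combined with the already-established identity $(\boldsymbol{\Omega}_1 \boldsymbol{X}_a)^k=\langle X_a\rangle^{k-1}\boldsymbol{\Omega}_1\boldsymbol{X}_a$ for $k\ge 1$ (and the trivial case $k=0$, where the factor is $\boldsymbol{I}$ and can simply be dropped from the product), an inductive application on the number $p\le m$ of factors with $i_{\sigma(k)}\ge 1$ yields
\begin{equation}
(\boldsymbol{\Omega}_1 \boldsymbol{X}_{\sigma(1)})^{i_{\sigma(1)}}\cdots(\boldsymbol{\Omega}_1 \boldsymbol{X}_{\sigma(m)})^{i_{\sigma(m)}}
=\Bigl(\prod_{k=1}^{m-1}\langle X_{\sigma(k)}\rangle^{i_{\sigma(k)}}\Bigr)\langle X_{\sigma(m)}\rangle^{i_{\sigma(m)}-1}\,\boldsymbol{\Omega}_1 \boldsymbol{X}_{\sigma(m)},
\end{equation}
where factors with vanishing exponent drop out harmlessly and the last active factor retains one copy of $\boldsymbol{\Omega}_1\boldsymbol{X}$.

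Next I apply this to $\vec 1$. Using $\boldsymbol{\Omega}_1 \boldsymbol{X}_{\sigma(m)}\vec 1=\boldsymbol{\Omega}_1 \vec X_{\sigma(m)}=\langle X_{\sigma(m)}\rangle \vec 1$, the residual matrix factor converts into the missing scalar, and the full expression becomes
\begin{equation}
(\boldsymbol{\Omega}_1 \boldsymbol{X}_{\sigma(1)})^{i_{\sigma(1)}}\cdots(\boldsymbol{\Omega}_1 \boldsymbol{X}_{\sigma(m)})^{i_{\sigma(m)}}\vec 1
=\Bigl(\prod_{k=1}^{m}\langle X_{k}\rangle^{i_{k}}\Bigr)\vec 1,
\end{equation}
where the permutation $\sigma$ disappears because scalar multiplication is commutative. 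Finally, summing over $\sigma\in\mathcal S(m)$ with weights $o_\sigma$ satisfying $\sum_\sigma o_\sigma=1$ gives the same value, proving ordering-independence.

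I expect the main (and only real) obstacle to be bookkeeping around vanishing exponents: if some $i_j=0$ the corresponding factor is $\boldsymbol{I}$ and sits inertly in the product, while if \emph{all} $i_j=0$ the expression is simply $\vec 1$. These boundary cases should be addressed explicitly by restricting the induction to the sub-sequence of indices with $i_{\sigma(k)}\ge 1$, after which the rearrangement argument above applies verbatim.
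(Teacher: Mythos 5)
Your proposal is correct and follows essentially the same route as the paper: the key identity $(\boldsymbol{\Omega}_1\boldsymbol{X}_a)(\boldsymbol{\Omega}_1\boldsymbol{X}_b)=\langle X_a\rangle\,\boldsymbol{\Omega}_1\boldsymbol{X}_b$ is just the paper's relation $\boldsymbol{\Omega}_1\boldsymbol{X}_a\boldsymbol{\Omega}_1=\langle X_a\rangle\boldsymbol{\Omega}_1$ right-multiplied by $\boldsymbol{X}_b$, and the subsequent collapse of each ordered monomial to the scalar $\prod_k\langle X_k\rangle^{i_k}$ times $\vec 1$, followed by summing the weights $o_\sigma$ to one, mirrors the paper's argument. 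Your explicit treatment of vanishing exponents is a small added point of rigor that the paper glosses over, but it does not change the approach.
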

\begin{proof}
The proof relies on the following observation. We have in general that
\begin{eqnarray}
    \boldsymbol{\Omega}_1  \boldsymbol{X}_s \boldsymbol{\Omega}_1  =\langle X_s\rangle \boldsymbol{\Omega}_1.
    \label{eq:trick}
\end{eqnarray}
with $\langle X_s\rangle=\frac{1}{N} \sum_{j=1}^N X_s^j$. The previous result can be easily shown as follows
\begin{eqnarray}
    (\boldsymbol{\Omega}_1  \boldsymbol{X}_s \boldsymbol{\Omega}_1)_{ij}&=&\sum_{k_1,k_2=1}^N \Omega_{1,ik_1}  {X}_{s,k_1 k_2} {\Omega}_{1,k_2 j}\nonumber \\
    &=&\frac{1}{N^2} \sum_{k_1,k_2=1}^N X_{s,k_1} \delta_{k_1 k_2}\nonumber \\
    &=&\frac{1}{N} \sum_{k_1=1}^N X_{s,k_1} \frac{1}{N}=\langle X_s\rangle  {\Omega}_{1,ij}
\end{eqnarray}
Because of \eqref{eq:trick}, we can always write
the following
\begin{eqnarray}
    (\boldsymbol{\Omega}_1  \boldsymbol{X}_{\sigma(1)})^{\sigma(i_1)}\cdots (\boldsymbol{\Omega}_1  \boldsymbol{X}_{\sigma(m)})^{\sigma(i_m)}=f_{\sigma(i_1),\cdots,\sigma(i_{m})}(\langle{X}_{\sigma(1)}\rangle,\cdots, \langle{X}_{\sigma(m)}\rangle) \boldsymbol{\Omega}_1  \boldsymbol{X}_{\sigma(m)}.
\end{eqnarray}
where function $f$ is scalar. To gain an intuition about the scalar $f$, consider for instance $(\boldsymbol{\Omega}_1  \boldsymbol{X}_{1})^a(\boldsymbol{\Omega}_1  \boldsymbol{X}_{2})^b$.
Using \eqref{eq:trick}, the previous expression  can be written as 
\begin{eqnarray}
    (\boldsymbol{\Omega}_1  \boldsymbol{X}_{1})^a(\boldsymbol{\Omega}_1  \boldsymbol{X}_{2})^b=\langle X_1\rangle^{a-1} \langle X_2\rangle^{b-1}\boldsymbol{\Omega}_1  \boldsymbol{X}_{1}\boldsymbol{\Omega}_1  \boldsymbol{X}_{2}=\langle X_1\rangle^{a} \langle X_2\rangle^{b-1}\boldsymbol{\Omega}_1  \boldsymbol{X}_{2},
\end{eqnarray}
so that $f_{12}=\langle X_1\rangle^{a} \langle X_2\rangle^{b-1}$. 
At this point we have
\begin{eqnarray}
\{(\boldsymbol{\Omega}_1  \boldsymbol{X}_1)^{i_1}\cdots (\boldsymbol{\Omega}_1  \boldsymbol{X}_m)^{i_m} \}_{S_r}\vec 1 &=&\sum_{\sigma\in S_m} o_{\vec \sigma}f_{\sigma(i_1),\cdots,\sigma(i_{m})}(\langle{X}_{\sigma(1)}\rangle,\cdots, \langle{X}_{\sigma(m)}\rangle) \boldsymbol{\Omega}_1  \boldsymbol{X}_{\sigma(m)}\vec 1\nonumber \\
&=&\sum_{\sigma\in S_m} o_{\vec \sigma}f_{\sigma(i_1),\cdots,\sigma(i_{m})}(\langle{X}_{\sigma(1)}\rangle,\cdots, \langle{X}_{\sigma(m)}\rangle) \langle X_{\sigma(m)}\rangle\vec 1\nonumber\nonumber \\
&=&\sum_{\sigma\in S_m} o_{\vec \sigma}f_{\sigma(i_1),\cdots,\sigma(i_{m})}(\langle{X}_{\sigma(1)}\rangle,\cdots, \langle{X}_{\sigma(m)}\rangle) \langle X_{\sigma(m)}\rangle\vec 1\nonumber\nonumber \\
&=&f_{i_1,\cdots,i_{m}}(\langle{X}_{\sigma(1)}\rangle,\cdots, \langle{X}_{\sigma(m)}\rangle) \langle X_{m}\rangle\vec 1
\end{eqnarray}
which follows from the fact that the scalar variables $\langle X_j\rangle$ do commute.
\end{proof}
Proposition \ref{prop:independence} is important because it implies the following
\begin{corollary} \label{cor:independence} \textbf{Dynamical ordering independence for the uniform mean field projector}. 
For any analytic functions $f_i$, we have 
\begin{eqnarray}
\mathcal O_r&=&(\{f_i(x_1,\cdots,x_m)\},\boldsymbol{\Omega_1},\{\boldsymbol{Q_i}(\boldsymbol{\Omega_1})\vec X_i\},S_r, \vec 1,N)\nonumber\\
&=&(\{f_i(x_1,\cdots,x_m)\},\boldsymbol{\Omega_1},\{\boldsymbol{Q_i}(\boldsymbol{\Omega_1})\vec X_i\}, \vec 1,N),%\equiv \mathcal O, 
\end{eqnarray}
or, alternatively, the uniform mean field PEDS are ordering independent.
\end{corollary}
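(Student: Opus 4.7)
The plan is to show that the corollary follows almost immediately from Proposition~\ref{prop:independence}, by verifying that every term on the right-hand side of the PEDS equation is already known to be ordering-independent. First, I would write out the governing equation of $\mathcal O_r$ explicitly. For each $s=1,\dots,m$,
\begin{equation}
    \frac{d\vec X_s}{dt} = \boldsymbol{\Omega}_1 \boldsymbol{F}_s(\vec X_1,\dots,\vec X_m)\vec 1 - \boldsymbol{Q}_s(\boldsymbol{\Omega}_1)\vec X_s,
\end{equation}
where the matrix map, in the standard non-commutative form, reads
\begin{equation}
    \boldsymbol{F}_s(\vec X_1,\dots,\vec X_m)\vec 1 = \sum_{k=0}^\infty \sum_{i_1,\dots,i_m}^k a_{s,k;i_1\dots i_m} \{(\boldsymbol{\Omega}_1\boldsymbol{X}_1)^{i_1}\cdots(\boldsymbol{\Omega}_1\boldsymbol{X}_m)^{i_m}\}_{S_r}\vec 1.
\end{equation}
The ordering $S_r$ appears only inside the curly brackets; the coefficients $a_{s,k;i_1\dots i_m}$ and the overall structure are fixed.

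Next I would handle the two contributions separately. For the matrix map contribution, Proposition~\ref{prop:independence} states precisely that each ordered monomial applied to $\vec 1$ is independent of $S_r$. Hence the full series, being a linear combination with $S_r$-independent coefficients of $S_r$-independent vectors, is itself independent of $S_r$. Multiplying on the left by $\boldsymbol{\Omega}_1$ preserves this property, so the first addend on the right-hand side of the PEDS equation does not depend on the chosen ordering. For the decay term, $\boldsymbol{Q}_s(\boldsymbol{\Omega}_1)\vec X_s$ involves only a single extended variable together with the projector $\boldsymbol{\Omega}_1$, so no ordering choice enters its definition.

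Combining these two observations, the full vector field of $\mathcal O_r$ is identical for every ordering $S_r$. Since the initial conditions and all other entries of the tuple coincide, by uniqueness of solutions of the Cauchy problem the flow $\Phi_t$ is the same. Thus the tuple
$(\{f_i\},\boldsymbol{\Omega}_1,\{\boldsymbol{Q}_i(\boldsymbol{\Omega}_1)\vec X_i\},S_r,\vec 1,N)$ defines a dynamical system that does not depend on $S_r$, and the ordering entry may be suppressed, yielding the claimed identification.

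There is really no deep obstacle here: the content of the corollary is entirely absorbed in Proposition~\ref{prop:independence}. The only mild subtlety worth making explicit is that the constant ($k=0$) term of the Taylor expansion and the decay contribution must be checked separately, as they fall outside the scope of the proposition's hypothesis (they involve no $\boldsymbol{X}_j$, respectively only one $\vec X_s$), but both are trivially $S_r$-independent and so do not affect the argument.
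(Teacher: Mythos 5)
Your argument is correct and is essentially the paper's own proof: both reduce the claim to Proposition~\ref{prop:independence} via the term-by-term series expansion of the analytic functions $f_i$. Your version merely makes explicit a few points the paper leaves implicit (the $k=0$ term, the ordering-free decay term, and uniqueness of the solution of the Cauchy problem), which is a harmless elaboration rather than a different route.
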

\begin{proof}
The proof follows directly from the fact that any analytic function $f_i(x_1,\cdots,x_m)$ can be written in the form of a series expansion as in Proposition \ref{prop:independence}.
\end{proof}

This implies essentially that for any dynamical system, we can write the dynamics with the most convenient ordering, without affecting the dynamics.

\section{Numerical Examples} \label{sec:numerics}
We present here some numerical examples of application of the PEDS procedure. 

\subsection{Implementation remarks}
For the sake of implementation, it would be convenient to define the PEDS transformation without having to evaluate the Taylor expansion, as for the theoretical developments in the previous Sections. This can be easily carried out for factorized vector target systems $f_i(x_1,\cdots,x_m)=\prod_{k=1}^m f_{i,k}(x_k)$ (or linear combinations of factorized terms of the same type). For such factorized target systems, the matrix map can be built as the function $\boldsymbol{F}_{i,k}(\boldsymbol{\Omega}\boldsymbol{X}_k)$ (and \ $\boldsymbol{F}_i(\boldsymbol{X}_1,\cdots,\boldsymbol{X}_m)=\sum_i a_i\prod_{k=1}^m \boldsymbol{F}_{i,k}(\boldsymbol{X}_k)$ or similar expressions).

The question is therefore how to efficiently evaluate such matrix functions. This can be done defining the matrix maps as the Taylor expansion evaluated in matrix $\boldsymbol{\Omega} \boldsymbol{X}_k $. We can write
\begin{equation}
    (\boldsymbol{\Omega} \boldsymbol{X}_k)^s=\sqrt{\boldsymbol{X}_k}^{-1} \big(\sqrt{\boldsymbol{X}_k}\boldsymbol{\Omega} \sqrt{\boldsymbol{X}_k} \big)^s \sqrt{\boldsymbol{X}_k}\label{eq:symom}
\end{equation}
where $\sqrt{\boldsymbol{X}_k}$ always exists since $\boldsymbol{X}_k$ is diagonal. Notice that \eqref{eq:symom} defines a similarity transformation, i.e. it conserves the spectrum of the similar matrices. An important point is to verify that if the spectrum of $\boldsymbol{\Omega} \boldsymbol{X}_k$ is real and $\boldsymbol{X}_k$ is diagonal, then the spectrum of $\sqrt{\boldsymbol{X}_k}\boldsymbol{\Omega} \sqrt{\boldsymbol{X}_k}$ is also real. This can be shown using the fact that, based on the definition of the Cayley polynomial and on the determinant properties, the eigenvalue problem for $\boldsymbol{\Omega} \boldsymbol{X}_k$ is equivalent to the generalized eigenvalue problem $\boldsymbol{\Omega}\vec v-\lambda \boldsymbol{X}_k^{-1}\vec v=\vec 0$, assuming $\boldsymbol{X}_k$ invertible. Then, a proof similar to the spectral theorem shows that if $\boldsymbol{\Omega}$ and $\boldsymbol{X}_k^{-1}$ are symmetric and real, then $\text{Im}(\lambda)=0$. This guarantees that an extension to the complex field of the scalar target system functions $f_{i,k}(x_k)$ is not required. In fact, \eqref{eq:symom} implies
\begin{eqnarray}
\boldsymbol{F}_{i,k}(\boldsymbol{\Omega} \boldsymbol{X}_k)=\sqrt{\boldsymbol{X}_k}^{-1}\boldsymbol{F}_{i,k}(\sqrt{\boldsymbol{X}_k}\boldsymbol{\Omega} \sqrt{\boldsymbol{X}_k}) \sqrt{\boldsymbol{X}_k}.
\end{eqnarray}
Since $\sqrt{\boldsymbol{X}_k}\boldsymbol{\Omega} \sqrt{\boldsymbol{X}_k}$ is symmetric,  $\boldsymbol{P}_{x_k}$ exists such that  $$\sqrt{\boldsymbol{X}_k}\boldsymbol{\Omega} \sqrt{\boldsymbol{X}_k}=\boldsymbol{P}_{x_k} \boldsymbol{\Sigma}_{x_k}\boldsymbol{P}_{x_k}^{-1}$$ where the real matrix $\boldsymbol{\Sigma}_{x_k}=\text{diag}\{\sigma_{x_k,1},\cdots,\sigma_{x_k,N}\}$ is made of the elements of the spectrum of $\sqrt{\boldsymbol{X}_k}\boldsymbol{\Omega} \sqrt{\boldsymbol{X}_k}$. As a result, we can write
\begin{eqnarray}
    \boldsymbol{F}_{i,k}(\boldsymbol{\Omega} \boldsymbol{X}_k)=\sqrt{\boldsymbol{X}_k}^{-1}\boldsymbol{P}_{x_1}\boldsymbol{F}_{i,k}(\boldsymbol{\Sigma}_{x_k})\boldsymbol{P}_{x_k}^{-1}\sqrt{\boldsymbol{X}_k}
\end{eqnarray}
where 
\begin{eqnarray}
    \boldsymbol{F}_{i,k}(\boldsymbol{\Sigma}_{x_1})=\text{diag}\big(f_{i,k}(\sigma_{x_k,1}),\cdots,f_{i,k}(\sigma_{x_k,N}) \big).
\end{eqnarray}
Thus, evaluating the matrix maps boils down to the knowledge of the eigenvalues and eigenvectors of $\sqrt{\boldsymbol{X}_k}\boldsymbol{\Omega} \sqrt{\boldsymbol{X}_k}$. The question is whether these two must evaluated at every time step, as $\sqrt{\boldsymbol{X}_k}$ is a dynamical variable: unfortunately this is the case.

\subsection{Uniform mean field projector}
We now provide several examples to show the applications of the theory developed in this paper.
\subsubsection{One dimensional potential}

Our first example is a one dimensional nonlinear dynamical system, written as:
\begin{equation}
    \frac{dx}{dt}=f(x)=-\frac{\partial V(x)}{\partial x}.
    \label{eq:dyn1d}
\end{equation}
Let us thus analyze numerically the PEDS $\mathcal O=\{f(x),\boldsymbol{\Omega}_1,-\alpha (\boldsymbol{I}-\boldsymbol{\Omega}_1) \vec X,\vec 1,N\}$, with a potential of the form 
\begin{eqnarray}
V(x)=a_0+a_1 x+\frac{a_2}{2} x^2 +\frac{a_3}{3} x^3 + \frac{a_4}{4} x^4
\label{eq:potential}
\end{eqnarray}
for a set of parameters for which two minima are present, as shown in Fig.~\ref{fig:pot}. First, we compare the standard commutative and non-commutative maps. The difference is shown in Fig.~\ref{fig:linnoncomm} for identical initial conditions.

\begin{figure}
    \centering
    \includegraphics[scale=.15]{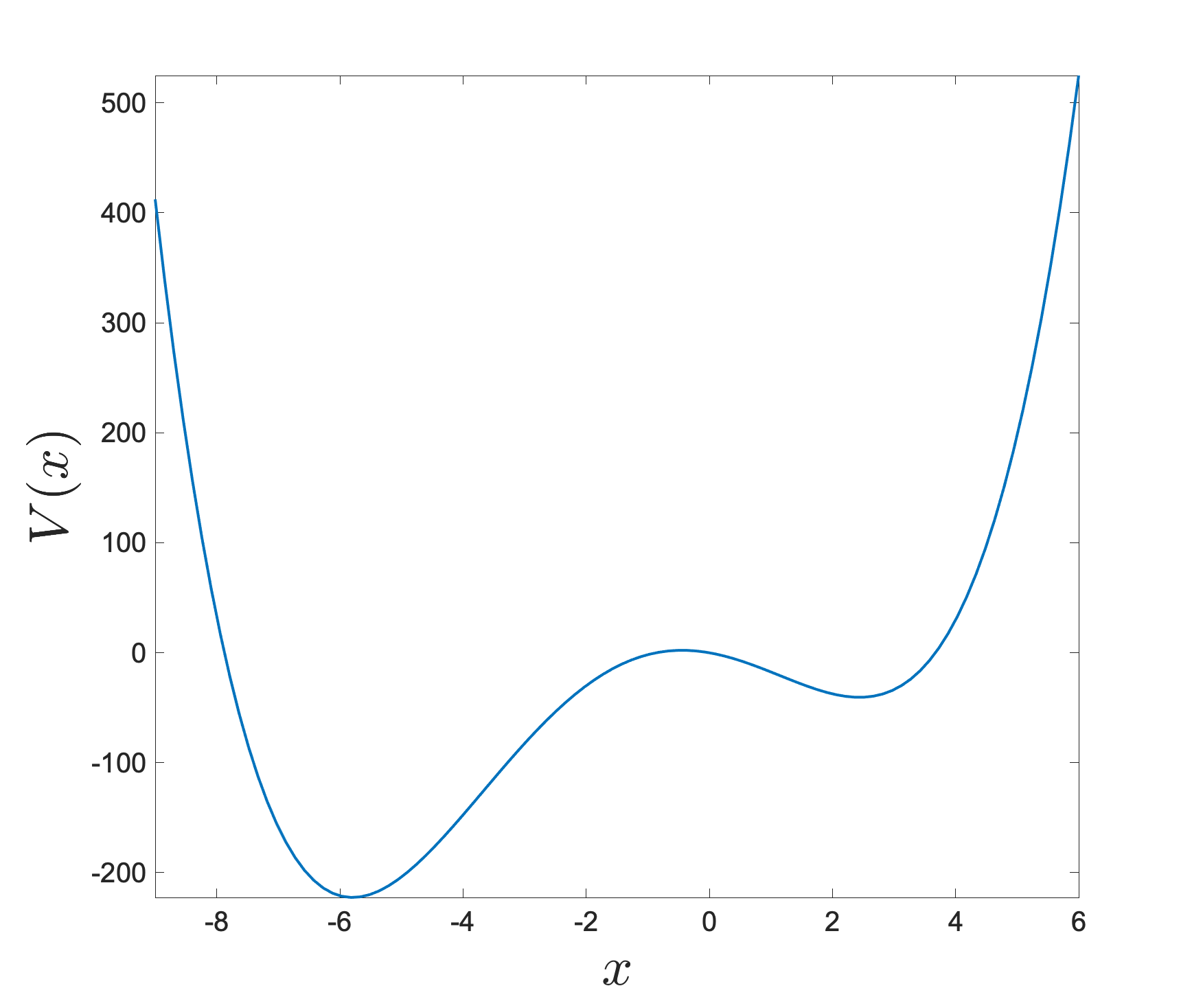}
    \caption{Potential $V(x)$ in \eqref{eq:potential}, for $a_0=0$, $a_1=9.85$, $a_2=10$, $a_3=2$ and $a_4=-0.395$. The parameters are chosen to provide the potential minima.}
    \label{fig:pot}
\end{figure}

The associated PEDS is given by the differential system
\begin{eqnarray}
    \frac{d\vec X}{dt}&=&\boldsymbol{F}(\vec X)\vec 1-\alpha (\boldsymbol{I}-\boldsymbol{\Omega_1}) \vec X\nonumber \\
    &=&-\Big(a_1\boldsymbol{I}+a_2 (\boldsymbol{\Omega_1} \boldsymbol{X})+a_3 (\boldsymbol{\Omega_1} \boldsymbol{X})^2+a_4  (\boldsymbol{\Omega_1} \boldsymbol{X})^3 \Big) \vec 1-\alpha (\boldsymbol{I}- \boldsymbol{\Omega_1})\vec 1
\end{eqnarray}

\begin{figure}
    \centering
   \includegraphics[scale=.25]{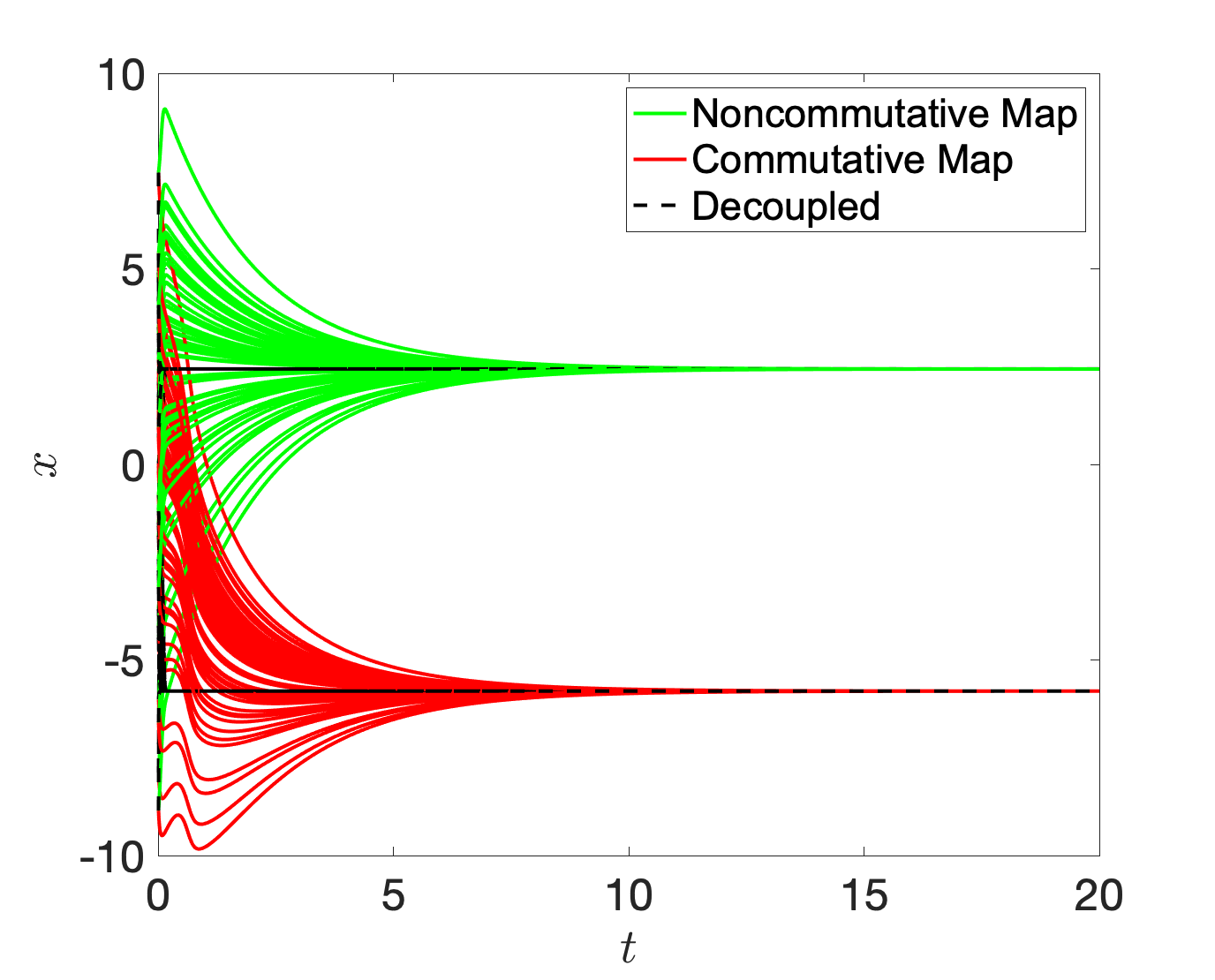}
    \caption{Comparison between the standard commutative and non-commutative maps for the 1D potential \eqref{eq:potential} and the target system.  }
    \label{fig:linnoncomm}
\end{figure}

\begin{figure}
    \centering
    \includegraphics[scale=.2]{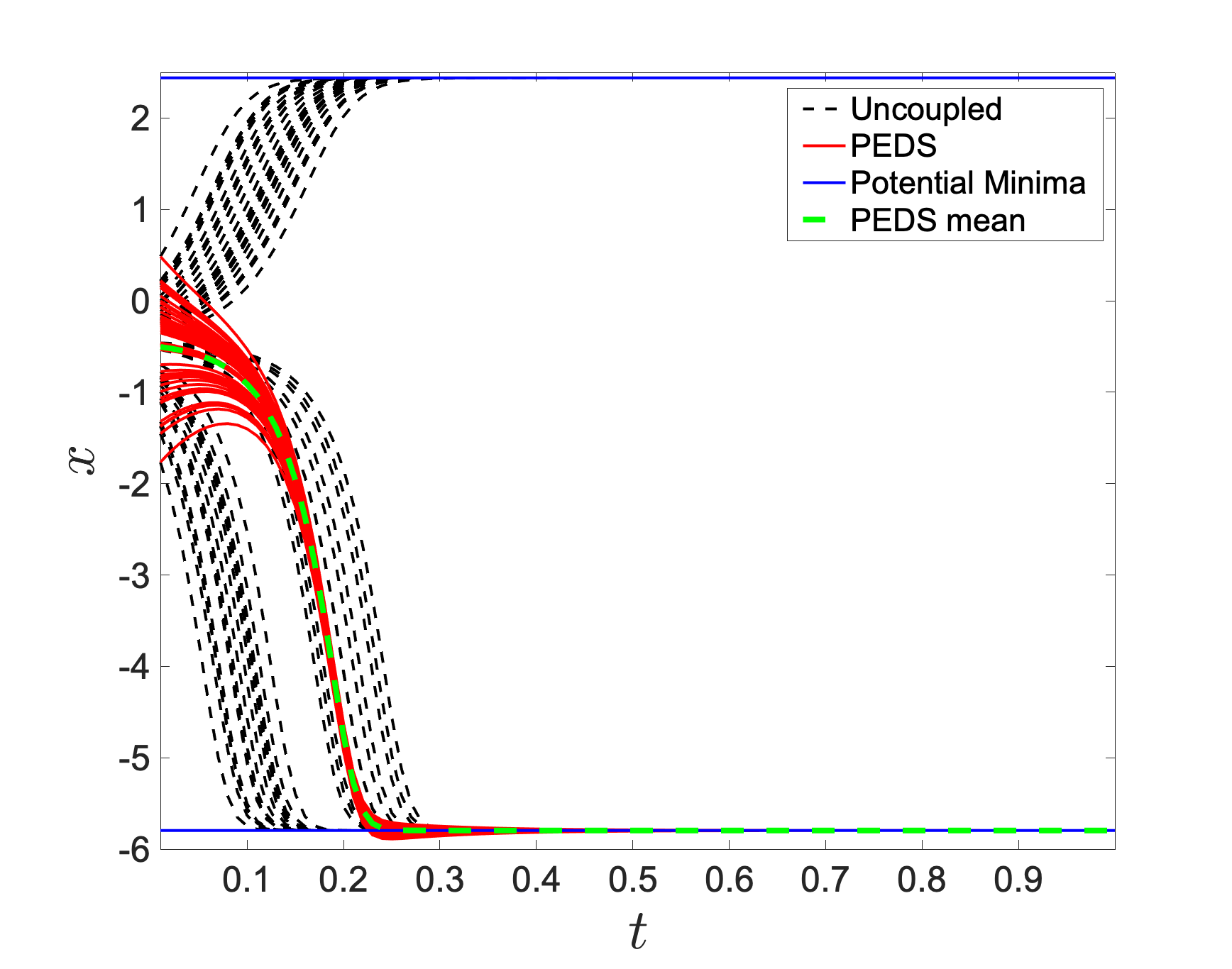}
    \caption{Evolution of the PEDS $\{f(x),\boldsymbol{\Omega}_1,-\alpha (\boldsymbol{I}-\boldsymbol{\Omega_1}) \vec X,\vec 1,N\}$ for random initial conditions around the maximum for random initial conditions for the non-commutative map. The blue solid lines are the target system minima. The black dashed curves are the target system trajectories ($\Omega=I$), that asymptotically reach both the two minima. The red curves refer to the coupled PEDS with the uniform mean field projector using the standard non-commutative map, leading to the global minimum, as it can be seen from the plot of the $\tilde x(t)=\mathcal P_{\boldsymbol{\Omega}_1} \vec X(t)$, the green dashed line. 
    \label{fig:thesol}}
\end{figure}

The results of the numerical integration, using a simple Euler scheme for Gaussian-distributed initial conditions around the potential maximum, at $x^*=-0.51$, of the target system and of the PEDS embedding with the standard non-commutative map. The PEDS trajectories all reach the global minimum of the potential $V(x)$, while the uncoupled trajectories split between the two stable equilibria.

\subsubsection{Vector target system}
As an example of vector target system, we consider a two-dimensional dynamical system embedded with the standard non-commutative map.
The target system is:
\begin{eqnarray}
    \frac{dx}{dt}&=&-\frac{\partial V(x,y)}{\partial x}\\
    \frac{dy}{dt}&=&-\frac{\partial V(x,y)}{\partial y}
\end{eqnarray}
where
\begin{equation}
    V(x,y)=\exp\left(\frac{x^2}{2}-\frac{y^2}{2}+\frac{y^4}{4}\right)
    \label{eq:vxy}
\end{equation}
which is characterized by two local minima,$(x^*=0,y^*=\pm 1)$.
The equations of motion define the gradient descent dynamics
\begin{eqnarray}
    \frac{dx}{dt}&=&f_x(x,y)=-x V(x,y)\\
    \frac{dy}{dt}&=&f_y(x,y)=-(y-y^3) V(x,y).
\end{eqnarray}

The interest in this examples lies in the fact that the PEDS equations of motion depend on the ordering prescription considered. We discuss here the two cases defined below:
\begin{eqnarray}
    S_1&\rightarrow& \begin{cases}\boldsymbol{F}^{(1)}_x(\vec X,\vec Y)&=-\boldsymbol{\Omega}_1 \boldsymbol{X} \ \boldsymbol{V}_1(\boldsymbol{X},\boldsymbol{Y}) \\
    \boldsymbol{F}_y^{(1)}(\vec X,\vec Y)&= -(\boldsymbol{\Omega}_1 \boldsymbol{Y})\big(\boldsymbol{I}-(\boldsymbol{\Omega}_1 \boldsymbol{Y})^2 \big)\ \boldsymbol{V}_1(\boldsymbol{X},\boldsymbol{Y}) \\
    \end{cases}  \\
       S_2&\rightarrow & \begin{cases}\boldsymbol{F}_x^{(2)}(\vec X,\vec Y)&=-\boldsymbol{\Omega}_1 \boldsymbol{X}\ \boldsymbol{V}_2(\boldsymbol{X},\boldsymbol{Y}) \\
    \boldsymbol{F}_y^{(2)}(\vec X,\vec Y)&= -(\boldsymbol{\Omega}_1 \boldsymbol{Y})\big(\boldsymbol{I}-(\boldsymbol{\Omega}_1 \boldsymbol{Y})^2 \big)\ \boldsymbol{V}_2(\boldsymbol{X},\boldsymbol{Y})  \\
    \end{cases}
\end{eqnarray}
where
\begin{eqnarray}
    \boldsymbol{V}_1(\boldsymbol{X},\boldsymbol{Y})&=&\text{exp}\left(\frac{1}{2}(\boldsymbol{\Omega_1} \boldsymbol{X})^2+(\boldsymbol{\Omega_1} \boldsymbol{Y})^2(\frac{1}{2}\boldsymbol{I}-\frac{1}{4}(\boldsymbol{\Omega_1} \boldsymbol{Y})^2 \right)\\
    \boldsymbol{V}_2(\boldsymbol{X},\boldsymbol{Y})&=&\text{exp}\Big(\frac{1}{2}(\boldsymbol{\Omega_1} \boldsymbol{X})^2\Big)\text{exp}\Big((\boldsymbol{\Omega_1} \boldsymbol{Y})^2(\frac{1}{2}\boldsymbol{I}-\frac{1}{4}(\boldsymbol{\Omega_1} \boldsymbol{Y})^2 \big)\Big)
\end{eqnarray}
where $\boldsymbol{V}_1\neq \boldsymbol{V}_2$ since  $\exp(\boldsymbol{\Omega}_1 \boldsymbol{X}\boldsymbol{\Omega}_1 \boldsymbol{Y})\neq \exp(\boldsymbol{\Omega}_1 \boldsymbol{X})\text{exp}(\boldsymbol{\Omega}_1 \boldsymbol{Y})$. Thus, choosing one versus the other is equivalent to a different ordering  choice.

\begin{figure}
    \centering
    \includegraphics[scale=0.2]{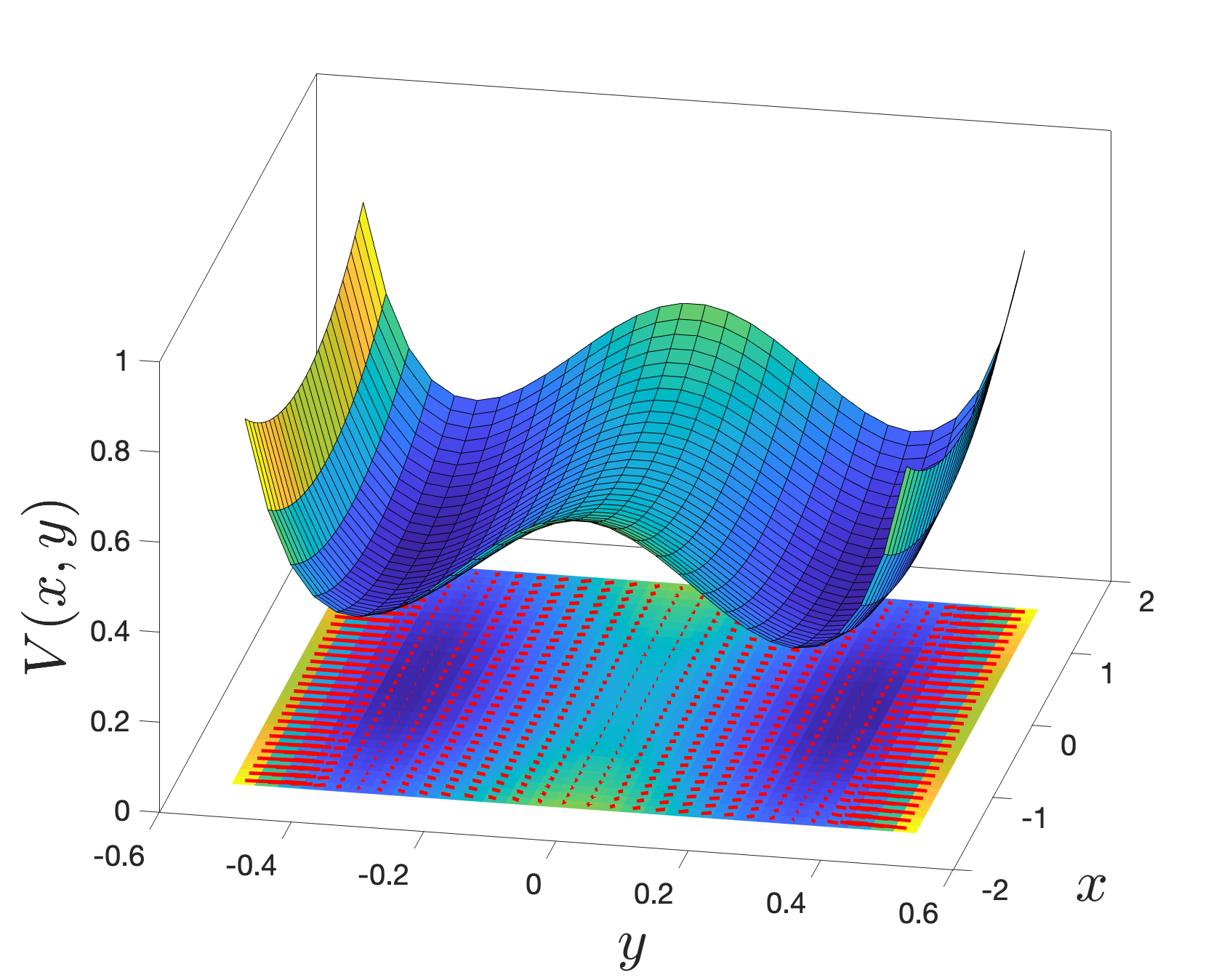}
    \caption{The potential $V(x,y)$ in \eqref{eq:vxy}, the projection with the two minima.}
    \label{fig:pot2d}
\end{figure}

We embed this system of equations via $\mathcal O_1=(\{f_x,f_y\},\boldsymbol{\Omega_1},\{-\alpha (\boldsymbol{I}-\boldsymbol{\Omega}_1)\vec X,-\alpha(\boldsymbol{I}-\boldsymbol{\Omega}_1)\vec Y\},S_1,\vec 1,N)$ and
$\mathcal O_2=(\{f_x,f_y\},\boldsymbol{\Omega_1},\{-\alpha (\boldsymbol{I}-\boldsymbol{\Omega}_1)\vec X,-\alpha(\boldsymbol{I}-\boldsymbol{\Omega}_1)\vec Y\},S_2,\vec 1,N)$, obtaining
\begin{eqnarray}
    \frac{d\vec X}{dt}&=&\boldsymbol{\Omega}_1\boldsymbol{F}_x^{(i)}\vec 1-\alpha (\boldsymbol{I}-\boldsymbol{\Omega}_1)\vec X,\\
    \frac{d\vec Y}{dt}&=&\boldsymbol{\Omega}_1\boldsymbol{F}_y^{(i)}\vec 1-\alpha (\boldsymbol{I}-\boldsymbol{\Omega}_1)\vec Y,
\end{eqnarray}
where $i=1,2$ is the label for $S_1,S_2$.
The numerical solutions are shown in Fig.~\ref{fig:sol2d} using $\alpha=0.1$ and $N=50$. The two dynamic behaviours are essentially identical, as per  Corollary~\ref{cor:independence}. Interestingly, even if in general $[\boldsymbol{\Omega_1 X},\boldsymbol{\Omega_1 Y}]\neq 0$, for this example we can work out the full details leading to the independence on the ordering.

First, let us note that $\boldsymbol{\Omega_1 X \Omega_1}=\langle X\rangle \boldsymbol{\Omega}_1 $. Using this expression, we can write $$f(\boldsymbol{\Omega}_1 \boldsymbol{X})= \boldsymbol{I}+\frac{f(\langle X \rangle) -1}{\langle X \rangle}\boldsymbol{\Omega}_1 \boldsymbol{X}$$ therefore
\begin{align}
\exp\left( f(\boldsymbol{\Omega}_1 \boldsymbol{X})+g(\boldsymbol{\Omega}_1 \boldsymbol{Y}) \right) &= \exp\left( 2\boldsymbol{I} + \frac{f(\langle X\rangle)-1}{\langle X\rangle}\boldsymbol{\Omega}_1 \boldsymbol{X}+\frac{g(\langle Y\rangle)-1}{\langle Y\rangle}\boldsymbol{\Omega}_1 \boldsymbol{Y} \right) \nonumber\\
&=\exp(2)\exp\left[ \boldsymbol{\Omega}_1 \left(\frac{f(\langle X\rangle)-1}{\langle X\rangle}\boldsymbol{ X}+\frac{g(\langle Y\rangle)-1}{\langle Y\rangle}\boldsymbol{ Y} \right) \right]
\end{align}
We can now apply again the same formula
\begin{align}
&\exp\left[ \boldsymbol{\Omega}_1 \left(\frac{f(\langle X\rangle)-1}{\langle X\rangle}\boldsymbol{ X}+\frac{g(\langle Y\rangle)-1}{\langle Y\rangle}\boldsymbol{ Y} \right) \right] \nonumber\\
&\quad= \exp\left[ \boldsymbol{I}+\frac{{f(\langle X\rangle)+g(\langle Y\rangle)-2}-1}{f(\langle X\rangle)+g(\langle Y\rangle)-2}\boldsymbol{\Omega}_1 \left(\frac{f(\langle X\rangle)-1}{\langle X\rangle}\boldsymbol{ X}+\frac{g(\langle Y\rangle)-1}{\langle Y\rangle}\boldsymbol{ Y}\right) \right]
\end{align}
yielding, after taking into account the multiplication times  $\vec 1$
\begin{align}
&\exp(2)\exp\left[ \boldsymbol{I}+\frac{{f(\langle X\rangle)+g(\langle Y\rangle)-2}-1}{f(\langle X\rangle)+g(\langle Y\rangle)-2}\boldsymbol{\Omega}_1 \left(\frac{f(\langle X\rangle)-1}{\langle X\rangle}\boldsymbol{ X}+\frac{g(\langle Y\rangle)-1}{\langle Y\rangle}\boldsymbol{ Y}\right) \right]\vec 1\nonumber\\
&\quad=\exp\left( f(\langle X\rangle)+g(\langle Y\rangle)  \right) \vec 1.
\end{align}
In other words, we have shown that
\begin{equation}
    \exp\left( f(\boldsymbol{\Omega}_1 \boldsymbol{X}) + g(\boldsymbol{\Omega}_1 \boldsymbol{Y}) \right) \vec 1 = \exp\left( f(\boldsymbol{\Omega}_1 \boldsymbol{X}) \right)\exp\left( g(\boldsymbol{\Omega}_1 \boldsymbol{Y}) \right) \vec 1,
\end{equation}
i.e. the equivalence of the dynamics of $S_1$ and $S_2$.
\begin{figure}
    \centering
    \includegraphics[scale=0.22]{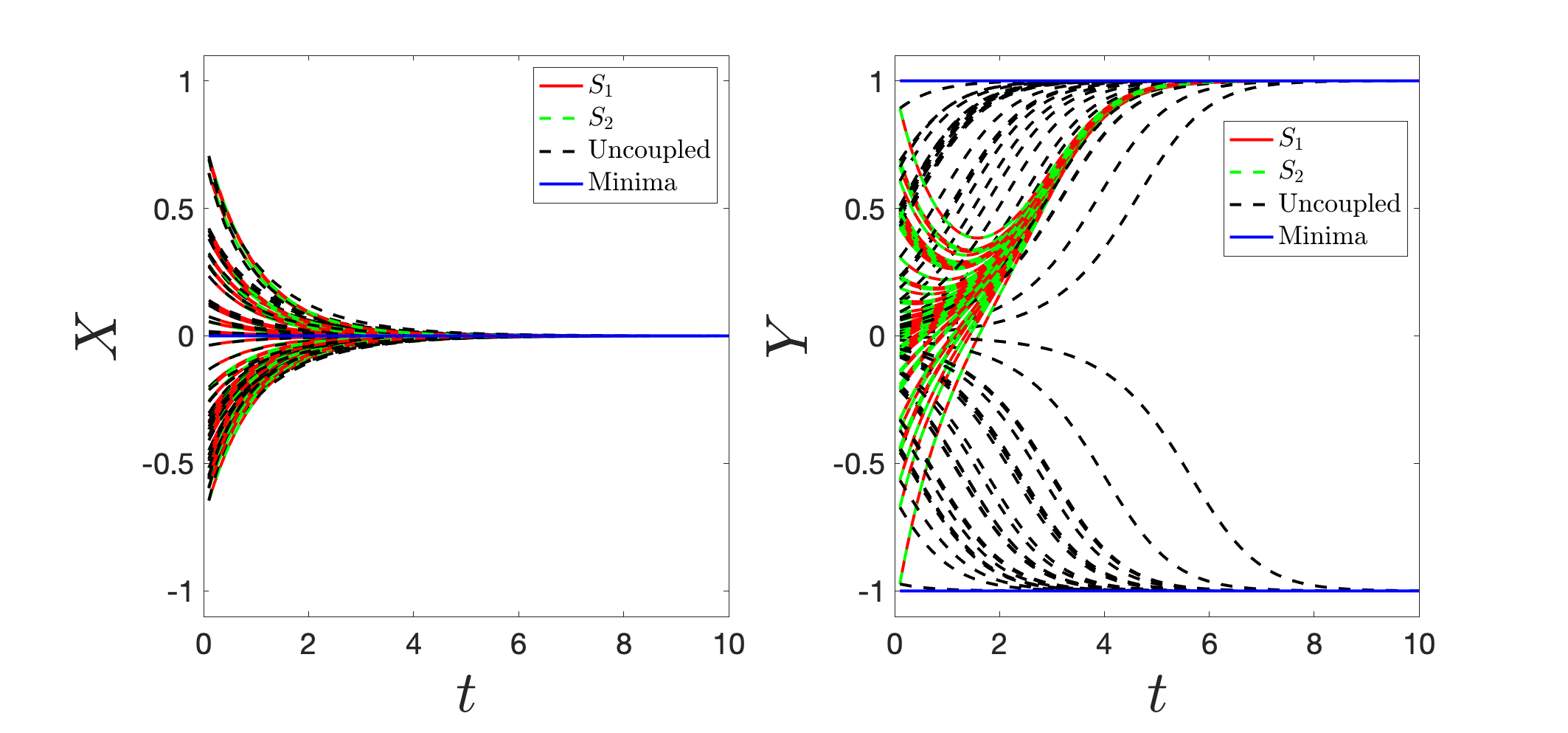}
    \caption{The two PEDS dynamics $\mathcal O_1$ and $\mathcal O_2$ for the $x$ and $y$ variables (left and right) for $N=50$ and integrated using an Euler scheme with step size $dt=0.1$. We consider here the noncommutative map. We can see that the dynamics of $S_1$ and $S_2$ are identical. Note that both $S_1$, $S_2$ and the uncoupled systems have the identical initial conditions.}
    \label{fig:sol2d}
\end{figure}

\subsubsection{Hamiltonian equations with dissipation}

As a third example, let us consider another two-dimensional vector target system: the description of a dissipative Hamiltonian system for a single particle of mass $m$. The target system reads
\begin{eqnarray}
    \frac{dx}{dt}&=&\frac{p}{m}\\
    \frac{dp}{dt}&=&-\frac{\partial V}{\partial x} -\chi \frac{p}{m}
\end{eqnarray}
where $\chi$ denotes the dissipation and we define the force $f(x,p)=-\partial V/\partial x$. Following the prescription of the previous sections, we write the PEDS
$$\mathcal O =(\{\frac{p}{m},f(x,p)-\chi {p}/{m}\},\boldsymbol{\Omega_1},\{-\alpha_x (\boldsymbol{I-\Omega_1})\vec X,-\alpha_p (\boldsymbol{I-\Omega_1})\vec P\},\vec 1,N).$$ The extended system equations are given by
\begin{eqnarray}
    \frac{d \vec X}{dt}&=& \frac{1}{m} \boldsymbol{\Omega}_1 \boldsymbol{P} \vec 1 - \alpha_x (\boldsymbol{I}-\boldsymbol{\Omega}_1) \vec X\\
    \frac{d \vec P}{dt}&=&\boldsymbol{\Omega}_1 \left(\boldsymbol{F}( \boldsymbol{ X})-\chi  \frac{\boldsymbol{P}}{m} \right)\vec 1- \alpha_p (\boldsymbol{I}-\boldsymbol{\Omega}_1) \vec P
\end{eqnarray}
which is thus a $2N$ set of equations. Let us focus on the potential $V(x,p)$ defined in \eqref{eq:potential}. 
The results are shown in Fig.~\ref{fig:solham2d}.

\begin{figure}
    \centering
    \includegraphics[scale=0.2]{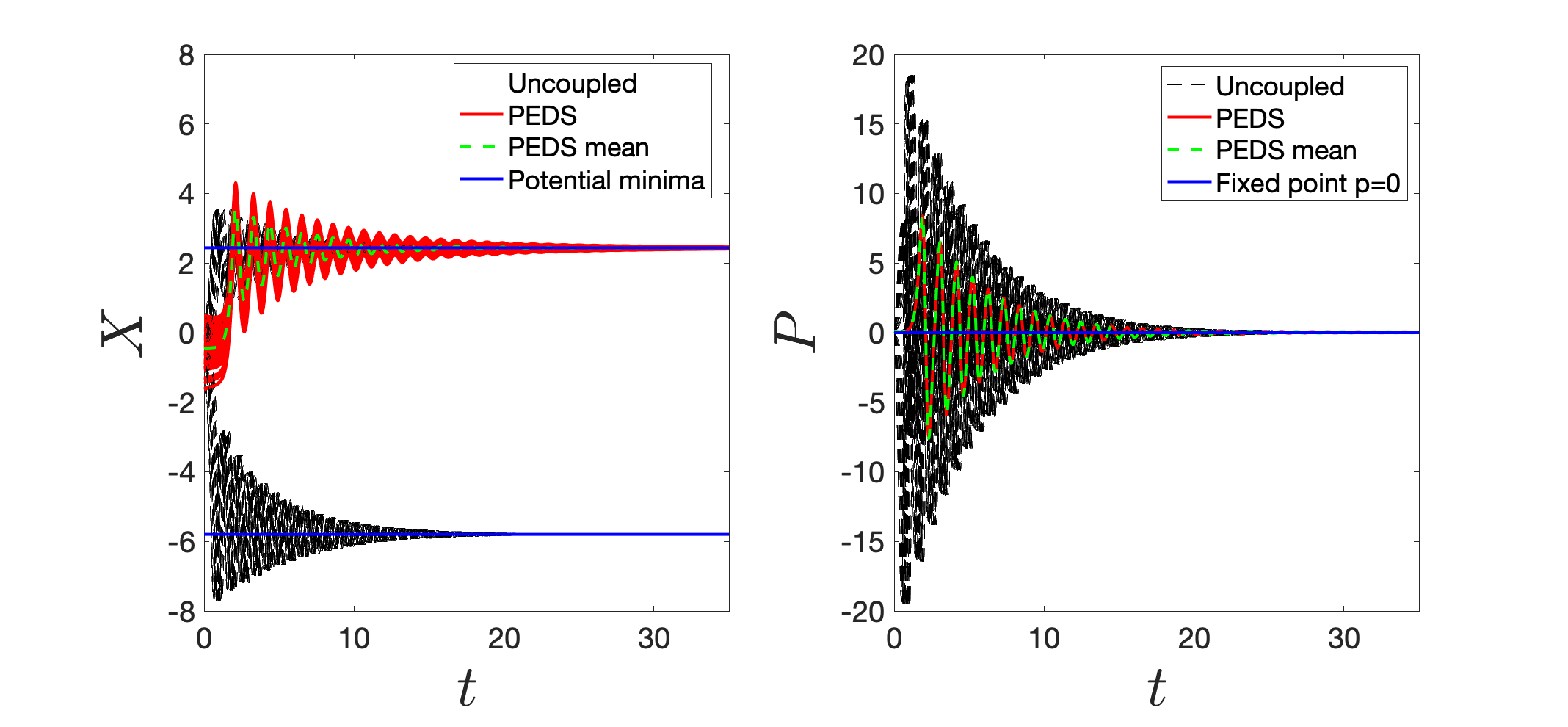}
    \caption{PEDS dynamics for the Hamiltonian system with one particle, integrated using an Euler scheme with step size $dt=0.001$.  The extended system has $N=50$. Also in this case, the dynamics approaches the fixed points of the potential for the $x$ variable, reaching a local minimum, while the $p$ variables tends to the asymptotic value $p^*=0$.}
    \label{fig:solham2d}
\end{figure}

\subsection{Beyond the uniform mean field projector}
This paper is focused mostly on the uniform mean field projection $\boldsymbol{\Omega}_1$. Before concluding, we wish to numerically simulate also the case of a PEDS with a different projection operator. Let us consider a PEDS where  $\boldsymbol{\Omega}=\boldsymbol{B}^t(\boldsymbol{B}\boldsymbol{B}^t)^{-1}\boldsymbol{B}$, where $\boldsymbol{B}$ is a random square matrix (uniformly distributed on $[0,1]$) of size $N\times K$. The scalar target dynamical system we are interested in is again
\begin{equation}
    \frac{dx}{dt}=f(x)=-\frac{\partial V(x)}{\partial x}.
    \label{eq:dyn1dr}
\end{equation}
with the potential in \eqref{eq:potential}, choosing parameters $a_4=a_0=0$, $a_3=-2$, $a_2=-10$ and $a_1=9.85$ that guarantee a single potential minimum, as shown Fig. \ref{fig:randomproj} (left). We then consider the PEDS
 $\mathcal O =(f(x),\boldsymbol{\Omega},-\alpha (\boldsymbol{I-\Omega})\vec X,\boldsymbol{\Omega}\vec 1,N)$, with $N=50$, and follow the observable $\tilde x=\mathcal P_{\boldsymbol{\Omega}} \vec X$.
 The results are shown in Fig. \ref{fig:randomproj} (right). The PEDS embedding converges also in this case to the potential absolute minimum, thus confirming that a generalization of the results of this paper to arbitrary projectors is possible.
 
\begin{figure}
    \centering
    \includegraphics[scale=0.2]{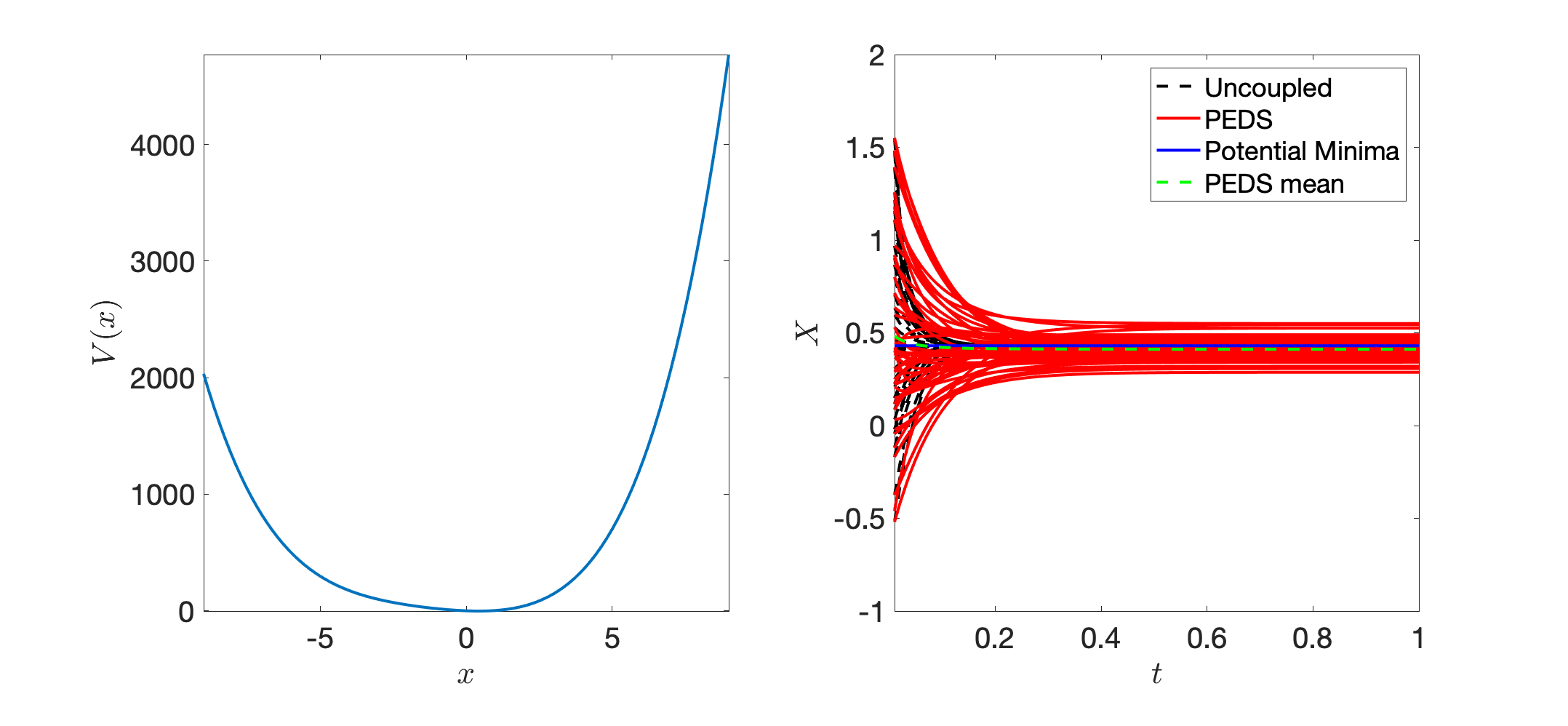}
    \caption{PEDS dynamics for the target system \eqref{eq:dyn1dr} with a random projection operator. We can see that the results we established for this paper in the case $\boldsymbol{\Omega_1}$ could be possibly extended to more general projectors.}
    \label{fig:randomproj}
\end{figure}

As a last comment, one of the main motivations for this study is that in circuits, conservation laws can be expressed in terms of projector operators. An example is the volatile but (almost) ideal memristor.
A resistor with memory can be described, at the lowest level of approximation for a current controlled device, by an effective dynamical resistance depending on an internal parameter $x$. In this sense, memristors are approximately described by the functional form $R(x)=R_\text{off} (1-x)+x R_\text{on}$, where $R_\text{on}<R_\text{off}$ are the boundary resistances, and $x\in[0,1]$. We assume that the internal memory parameter $x$ evolves according to a simple equation of the form $dx/{dt}={R_\text{on}}I/{\beta}-\alpha x$. %=({R_\text{on}}/{\beta}) {V}/{R(x)}-\alpha x$. 
 The parameters $\alpha$ and $\beta$ are the decay constant and the effective activation voltage per unit time, respectively. For a recent paper which inspired this study, consider \cite{caravelliscience}, where  transitions between effective minima of a lower dimensional potential were observed. Using Ohm's law, we define voltage $V=R(x) I$, so as to obtain a normalized equation for $x(t)$
\begin{eqnarray}
 \frac{dx}{dt}=\frac{V}{ \beta} \frac{1}{1-\chi x}-\alpha x=-\alpha \frac{\partial V(x,s)}{\partial x}
 \label{eq:oned}
\end{eqnarray}
where $\chi={(R_\text{off}-R_\text{on})}/{R_\text{off}}$ and $s=\frac{S}{\alpha \beta}$, with $0\leq \chi \leq 1$ in the physically relevant cases, and $V(x, s)$ as an effective potential, where $S$ is the voltage applied to the circuit, and $s$ is a normalized quantity with units of inverse time.

%{\color{red} Potential}
The dynamics of a single memristor \eqref{eq:oned} is fully characterized by the gradient following the dynamics of the effective potential
\begin{equation}
V(x,s)=\frac{1}{2} x^2 +\frac{s}{\chi} \log(1-\chi x), 
\label{eq:potential2}
\end{equation}
with $s=\frac{S}{\alpha \beta}$; the constant $\alpha$ also acts as the learning rate in \eqref{eq:oned}. 

For a network of memristors, the differential equation for $x_i(t)$ is a set of coupled ODE of the form \cite{Caravelli2016rl}:
\begin{eqnarray}
\frac{d}{dt} \vec x=\frac{1}{\beta}(\boldsymbol{I}-\chi \boldsymbol{\Omega X})^{-1} \boldsymbol{\Omega} \vec S-\alpha \vec x,
\label{eq:manyd}
\end{eqnarray}
where $\boldsymbol{X}_{ij}(t)=x_i(t) \delta_{ij}$. The matrix $\boldsymbol{\Omega}$ is the projection operator on the vector space of cycles of $\mathcal G$,%\fb{cosa sarebbe G? il grafo?}, 
the graph representing the circuit~\cite{Caravelli2016rl}, and, as discussed in the Introduction, a mathematical consequence of Kirchhoff's conservation laws. 
Now, we note that we can write \eqref{eq:manyd} as:
\begin{eqnarray}
\frac{d}{dt} \vec x=\boldsymbol{\Omega}(\frac{1}{\beta}(\boldsymbol{I}-\chi \boldsymbol{\Omega X})^{-1} \boldsymbol{\Omega} \vec S-\alpha \vec x)-\alpha (\boldsymbol{I}- \boldsymbol{\Omega})\vec x,
\label{eq:manyd2}
\end{eqnarray}
which is exactly in the form of a PEDS, with a standard decay function. Thus, the results of \cite{caravelliscience} can be interpreted as the relaxation of the system towards the minima defined by the embedding function. If $\boldsymbol{\Omega}=\boldsymbol{\Omega}_1$, then using the results of this paper we know that the potential \eqref{eq:potential2} determines the effective minima of the system. However, in order to justify the presence of the  rumbling transitions shown in \cite{caravelliscience}, a deeper understanding of the PEDS properties for a general projector $\boldsymbol{\Omega}$ is required.

\section{Conclusions and perspective}
In the present paper we presented and  studied a map between dynamical systems of size $m$ and  dynamical systems in a higher number of variables. This is the first of a series of papers formally investigating the projective embeddings of dynamical systems (PEDS) paradigm that we defined here. The purpose of this work was to formally show the properties of this type of embeddings, within the context of a particular projector matrix. As we have seen, their structure is such that for long times, the asymptotic equilibria of the target dynamical system can be recovered.  

We have discussed in particular the case of the uniform mean field projector operator ${\Omega}_{1,ij}=\frac{1}{N}$. For this choice, we have been able to prove analytically that the asymptotic equilibria are strictly connected to those of the original system. Aside from establishing the formalism, this paper also established some exact results about how the embedding changes the properties of the dynamics critical, including the cases of unstable equilibria and saddle points.

Specifically, we have studied the embedding  of $m$ dimensional dynamical systems in $Nm$-dimensional systems. The purpose of such embedding is to modify the nature of the fixed points of the dynamics, i.e. those satisfying $\{ \vec x^*\ \text{s.t.}\ {d\vec x}/{dt}|_{\vec x^*}=0\}$. In particular, we have shown that stable and saddle type fixed points retain their properties, while unstable fixed points become saddles. This observation justify future works in this direction, in particular exploiting different types of decay functions, matrix embeddings and projectors with respect to this contribution. It is worth to mention that a follow up of this work is in preparation, in which we discuss the behavior of PEDS for general projectors; many of the results on the Jacobian obtain in this paper do actually apply also in the general case \cite{PEDS2}.

An important aspect of interest of future works will be to focus on how to further modify the spectral properties of the fixed points, i.e. the nature of the Jacobian once evaluated at $\vec x^*$. What we have shown in the present paper is that, for the uniform projector, the PEDS Jacobian is always symmetric, and thus characterized by real eigenvalues, that in particular are negative if the corresponding fixed point of the target system is stable. This implies that the dynamics near stable fixed points is always laminar, e.g. slowly decaying towards the fixed point. As we will see in future works, this is not the case for general projectors, for which approximate but special techniques will have to be employed. 

As discussed, the spectral signature is in part inherited by the original, target dynamical system, but modified through the extended number of dimensions. The idea of generalizing the space of solutions to higher dimensions is not new. In a way, the PEDS technique is in spirit close to both Markov Chain Monte Carlo methods \cite{mcmc} and the notion of lifts in convex optimization \cite{lifts}, but is specifically developed for the fixed points of dynamical systems.

In particular, in \cite{caravelliscience} it was observed that memristive circuits have an effective lower dimensional representation in terms of an effective potential, and that they can exhibit a ``rumbling'' transition, i.e. a transient chaotic tunneling between local minima of a properly defined potential. As it turns out, such dynamics is only a particular case of the PEDS introduced here, in which the projector operator was given by random circuit connections.

%----------

The rumbling transition in \cite{caravelliscience} was pinpointed numerically to be due to an effective ``Lyapunov force'', shown to be present in connection with the rumbling transition phenomenon.  Such force was defined essentially as a deviation from a mean field theory, and we provided evidence of an athermal and novel mechanism in which barrier escapes emerge in the effective description of a multi-particle system.
This paper is a continuation of that work, attempting at generalizing those findings to general systems, although focusing specifically on a particular type of projector: in this case, these ``Lyapunov" forces are not present. Similar yet different types of behavior were also observed previously  within the context of memory-based computing (memcomputing) solutions \cite{reviewCarCar,Sheldon,traversapol,sciadvtra,Inst1,Inst2}.

The main focus of this paper represents a first step towards a clarification of the general reasons why the introduction of hidden variables in a dynamical system
can lead to transitions between local and global minima of the effective description via instabilities in the full system. Since maxima can be turned into saddle points, generically there cannot be no ``barriers'' when the target system is a gradient descent. However, as we will show formally in future works, in order to obtain the rumbling transitions, one has to go beyond the uniform mean field approximation and study a more general type of projector.

%----------

%-----
Clearly, the projective embedding studied in this paper can be employed in a variety of dynamical systems, including all sort of gradient-based dynamics, with applicability to machine learning and neural networks. These applications will also be the subject of future studies. In particular, we hope that the introduction of ``hidden variables'' in dynamical systems \cite{Bohm} can be further investigated for the purpose of machine learning and optimization applications \cite{ganguli}. In general, the study of transient chaos in dynamical systems and optimization \cite{toroc0,tchaos} is an interesting area of research with possible applications also in memristor-based algorithms \cite{yang2020}.

%\nonumsection{Note Added} \noindent A note can be added before
%Acknowledgments.
\vspace{0.3cm}
\textbf{Acknowledgments.} \noindent The work of F.C. was carried out under the auspices of the NNSA of the U.S. DoE at LANL under Contract No. DE-AC52-06NA25396, and in particular grant PRD20190195 from the LDRD. F. C. would also like to thank W. Bruinsma for various comments and observations on the paper.
%\nonumsection{Appendices} \noindent 
%\appendix{}

\bibliography{PEDS}

\end{document}